\def\@settitle{%
  \vspace*{-20pt}
  \begin{flushleft}%
    \baselineskip14\p@\relax
    \normalfont\bfseries\LARGE
%    \uppercasenonmath\@title
    \@title
  \end{flushleft}%
}
\def\@setauthors{%
  \begingroup
  \def\thanks{\protect\thanks@warning}%
  \trivlist
  %\centering
  \raggedright
  \large \@topsep30\p@\relax
  \advance\@topsep by -\baselineskip
  \item\relax
  \author@andify\authors
  \def\\{\protect\linebreak}%
%  \MakeUppercase{\authors}%
  \authors
  \ifx\@empty\contribs
  \else
    ,\penalty-3 \space \@setcontribs
    \@closetoccontribs
  \fi
  \normalfont
  \@setaddresses
  \endtrivlist
  \endgroup
}
\def\@setaddresses{\par
  \nobreak \begingroup
  \small
  \def\author##1{\nobreak\addvspace\smallskipamount}%
  \def\\{\unskip, \ignorespaces}%
  \interlinepenalty\@M
  \def\address##1##2{\begingroup
    \par\addvspace\bigskipamount\noindent
    \@ifnotempty{##1}{(\ignorespaces##1\unskip) }%
    {\ignorespaces##2}\par\endgroup}%
  \def\curraddr##1##2{\begingroup
    \@ifnotempty{##2}{\nobreak\noindent\curraddrname
      \@ifnotempty{##1}{, \ignorespaces##1\unskip}\/:\space
      ##2\par}\endgroup}%
  \def\email##1##2{\begingroup
    \@ifnotempty{##2}{\nobreak\noindent E-mail address%
      \@ifnotempty{##1}{, \ignorespaces##1\unskip}\/:\space
      \ttfamily##2\par}\endgroup}%
  \def\urladdr##1##2{\begingroup
    \def~{\char`\~}%
    \@ifnotempty{##2}{\nobreak\noindent\urladdrname
      \@ifnotempty{##1}{, \ignorespaces##1\unskip}\/:\space
      \ttfamily##2\par}\endgroup}%
  \addresses
  \endgroup
  \global\let\addresses=\@empty
}
\def\@setabstracta{%
    \ifvoid\abstractbox
  \else
    \skip@25\p@ \advance\skip@-\lastskip
    \advance\skip@-\baselineskip \vskip\skip@
%    \hrule\vskip2pt
    \box\abstractbox
    \prevdepth\z@ % because \abstractbox is a vtop
%    \vskip2pt\hrule
    \vskip-10pt
  \fi
}
\renewenvironment{abstract}{%
  \ifx\maketitle\relax
    \ClassWarning{\@classname}{Abstract should precede
      \protect\maketitle\space in AMS document classes; reported}%
  \fi
  \global\setbox\abstractbox=\vtop \bgroup
    \normalfont\small
    \list{}{\labelwidth\z@
      \leftmargin0pc \rightmargin\leftmargin
      \listparindent\normalparindent \itemindent\z@
      \parsep\z@ \@plus\p@
      
    }%
    \item[\hskip\labelsep\bfseries\abstractname.]%
}{%
  \endlist\egroup
  \ifx\@setabstract\relax \@setabstracta \fi
}
\def\section{\@startsection{section}{1}%
  \z@{-1.2\linespacing\@plus-.5\linespacing}{.8\linespacing}%
  {\normalfont\bfseries\Large}}
\def\subsection{\@startsection{subsection}{2}%
  \z@{-.8\linespacing\@plus-.3\linespacing}{.3\linespacing\@plus.2\linespacing}%
  {\normalfont\bfseries}}
\def\subsubsection{\@startsection{subsection}{3}%
  \z@{.7\linespacing\@plus.2\linespacing}{-1.5ex}%
  {\normalfont\itshape}}
\def\@secnumfont{\bfseries}
\def\to{\mathchoice{\longrightarrow}{\rightarrow}{\rightarrow}{\rightarrow}}
\newcommand{\shortxra}[2][]{\ext@arrow 0359\rightarrowfill@{#1}{#2}}
\def\longrightarrowfill@{\arrowfill@\relbar\relbar\longrightarrow}
\newcommand{\longxra}[2][]{\ext@arrow 0359\longrightarrowfill@{#1}{#2}}
\def\Nopagebreak{\@nobreaktrue\nopagebreak}
\theoremstyle{plain}
\newtheorem{theorem}{Theorem}[section]
\newtheorem{proposition}[theorem]{Proposition}
\newtheorem{corollary}[theorem]{Corollary}
\newtheorem{lemma}[theorem]{Lemma}
\newtheorem{maintheorem}{Theorem}
\theoremstyle{definition}
\newtheorem{definition}[theorem]{Definition}
\newtheorem{example}[theorem]{Example}
\newtheorem*{remark}{Remark}
\def\Z{\mathbb{Z}}
\def\Q{\mathbb{Q}}
\def\R{\mathcal{R}}
\def\C{\mathcal{C}}
\def\vp{\varphi}
\def\F{\mathcal{F}}
\def\P{\mathcal{P}}
\def\a{\alpha}
\def\b{\beta}
\def\T{\mathcal{T}}
\def\ba{\begin{array}}
\def\ea{\end{array}}
\let\oldsharp=\# \def\#{\mathbin{\oldsharp}}
\def\Bl{B\ell}
\def\lk{\operatorname{lk}}
\def\Arf{\operatorname{Arf}}
\def\wt{\widetilde}
\def\F{\mathcal{F}}
\def\G{\mathcal{G}}
\def\Z{\mathbb{Z}}
\def\Q{\mathbb{Q}}
\def\S{\mathcal{S}}
\def\d{\partial}
\def\ol{\overline}
\def\tilde{\widetilde}
\def\lk{\operatorname{lk}}
\def\+{\oplus}
\def\amalgover#1#2{\mathop{\amalg}_{#1}^{#2}}
\def\#{\mathbin{\oldsharp}}
\def\emptystr{}
\newcommand{\mkc}[2][]{\begin{color}{red}#2%
  \def\tempstr{#1}%
  \ifx\tempstr\emptystr \else\textsf{\SMALL\ \raise.7ex\hbox{[\tempstr]}}\fi
\end{color}}
\begin{document}
\title{Two-torsion in the grope and solvable filtrations of knots}
\author{Hye Jin Jang}
\address{Department of Mathematics, POSTECH, Pohang, 790--784, Republic of Korea}
\email{hyejin.jang1986@gmail.com}

\def\subjclassname{\textup{2010} Mathematics Subject Classification}
\expandafter\let\csname subjclassname@1991\endcsname=\subjclassname
\expandafter\let\csname subjclassname@2000\endcsname=\subjclassname
\subjclass{
 57M27, % Invariants of knots and 3-manifolds
 57M25, %, % Knots and links in $S^3$
 57N70% Cobordism and concordance (in low dimension)
%  57Q60; % Cobordism and concordance (in high dimension)
%  57M07, % Topological methods in group theory
}

\begin{abstract}
We study knots of order $2$ in the grope filtration $\{\G_h\}$ and the solvable filtration $\{\F_h\}$ of the knot concordance group.
We show that, for any integer $n\ge4$, there are knots generating a $\Z_2^\infty$ subgroup of $\G_n/\G_{n.5}$.
Considering the solvable filtration, our knots generate a $\Z_2^\infty$ subgroup of $\F_n/\F_{n.5}$ $(n\ge2)$ distinct from the subgroup generated by the previously known 2-torsion knots of Cochran, Harvey, and Leidy.
We also present a result on the 2-torsion part in the Cochran, Harvey, and Leidy's primary decomposition of the solvable filtration. 
\end{abstract}

\maketitle

\section{Introduction}
Two oriented knots $K$ and $J$ in $S^3$ are said to be \emph{(topologically) concordant} if $K\times\{0\}$ and $-J\times \{1\}$ cobound a locally flat annulus in $S^3\times[0,1]$, where $-J$ denotes the mirror image of $J$ with orientation reversed.
Concordance is an equivalence relation on the set of all oriented knots in $S^3$, and concordance classes form an abelian group under connected sum.
It is called the \emph{knot concordance group}.
A knot which represents the identity is called a \emph{slice knot}.
Understanding the structure of the knot concordance group has been one of the main interests in the study of knot theory.

In the celebrated paper \cite{cochran2003knot} of Cochran, Orr, and Teichner, they introduced several types of approximations of a knot being slice.
They defined the concept of \emph{knots bounding (symmetric) gropes of height $h$ in $D^4$} for each half integer $h\geq1$, relaxing the geometric condition of a knot bounding a slice disk.
Also, a knot whose zero framed surgery bounds a $4$-manifold which resembles slice knot exteriors is said to be $(h)$-solvable, where the half integer $h\ge0$ depends on the degree of resemblance.
(See Section~7 and 8 of \cite{cochran2003knot} for the precise definitions of a grope and $(h)$-solvability.)
Any slice knot bounds a grope of arbitrary height in $D^4$ and is $(h)$-solvable for any $h$.
Also, a knot bounding a grope of height $h+2$ is $(h)$-solvable \cite[Theorem~8.11]{cochran2003knot}, while the converse is unknown.

The set of concordance classes of knots bounding gropes of height $h$ in $D^4$ is a subgroup of $\C$, which is denoted by $\G_h$.
Similarly, the set of concordance classes of $(h)$-solvable knots is a subgroup of $\C$, which is denoted by $\F_h$.
They form filtrations in the knot concordance group~$\C$:
\[\{0\}\leq\cdots\leq\G_{h+0.5}\leq\G_h\leq\cdots,\G_{1.5}\leq\G_{1}\leq\C,\]
\[\{0\}\leq\cdots\leq\F_{h+0.5}\leq\F_h\leq\cdots,\F_{0.5}\leq\F_0\leq\C,\]
which are called the \emph{grope} and \emph{solvable filtration} respectively.

Recently, to understand the structure of the knot concordance group, the graded quotients $\{\G_n/\G_{n.5}\mid n\in\Z_{\ge1}\}$ and $\{\F_n/\F_{n.5}\mid n\in\Z_{\ge0}\}$ have been studied extensively and successfully.
(See \cite{cochran2003knot,cochran2004structure,  cochran2007knot,cha2007the,cochran2008higher-order, kim2008polynomial,cochran2009knot, horn2010the, cochran20112-torsion,cochran2011primary, cha2012l2, cha2010amenable}, for example.)
Interestingly, nothing is known about the structure of $\G_{n.5}/\G_{n+1}$ or $\F_{n.5}/\F_{n+1}$ except $\G_{1.5}/\G_2=0$.

In this paper, we study order 2 elements in the knot concordance group and those filtrations.
There were some results on the order 2 elements in the solvable filtration.
For instance, $\C/\F_0$ is isomorphic to $\Z_2$ with Arf invariant map as an isomorphism.
Also, the study of J.~Levine on the algebraic concordance group \cite{levine1970an} implies that $\F_0/\F_{0.5}$ contains $\Z^\infty_2$ as a subgroup.
In \cite{livingston1999order}, C.~Livingston found infinitely many order 2 elements in $\C$ using Casson-Gordon invariants~\cite{casson1978on,casson1986cobordism}. 
Using the fact that the Casson-Gordon invariants vanish for $(1.5)$-solvable knots \cite[Section~9]{cochran2003knot}, Livingston's method also gives infinitely many examples in $\F_1$, which generate a $\Z_2^\infty$ subgroup of $\F_1/\F_{1.5}$.
In \cite{cochran20112}, T.~Cochran, S.~Harvey, and C.~Leidy showed that for each integer $n\geq2$, there are infinitely many 2-torsion elements in $\F_n$ which generate a $\Z_2^\infty$ subgroup of $\F_n/\F_{n.5}$.

Regarding the grope filtration, much less was known about 2-torsion.
It is known that $\G_{1.5}$ and $\G_2$ are isomorphic to $\F_0$ and $\G_{2.5}$ is isomorphic to $\F_{0.5}$ \cite[Theorem~5]{teichner2002knots}.
Hence the results on the solvable filtration imply that $\G_1/\G_{1.5}$ is isomorphic to $\Z_2$ and $\G_2/\G_{2.5}$ contains $\Z_2^\infty$ as a subgroup.

In this paper, we show that $\G_n/\G_{n.5}$ has infinitely many 2-torsion elements for any $n\geq4$:

\begin{maintheorem}\label{theorem:maintheorem1}
For any integer $n\geq2$, there is a subgroup of $\G_{n+2}/\G_{n+2.5}$ isomorphic to $\Z_2^\infty$.
Moreover, the subgroup is generated by $(n)$-solvable knots with vanishing poly-torsion-free-abelian (abbreviated PTFA) $L^2$-signature obstructions.
\end{maintheorem}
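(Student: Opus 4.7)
My plan is to construct the knots by iterated satellite operations on negative amphichiral seeds, use the amphichiral symmetry simultaneously to force 2-torsion and to kill the PTFA $L^2$-signature obstructions, and then distinguish the resulting classes modulo $\G_{n+2.5}$ using Cha's amenable $L^2$-signatures involving finite cyclic quotients of $p$-power order.

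For each prime $p$ in a sufficiently generic infinite set $\S$, I would construct $K_p$ by $n$-fold iterated infection: starting from a negative amphichiral seed $J_p$ whose classical Alexander module has nontrivial $p$-primary part, apply successive infections with ribbon pattern knots along curves chosen in the derived series so as to preserve the amphichiral involution. The ribbon patterns together with the derived-series depth of the infection curves place $K_p$ in $\G_{n+2}\subseteq\F_n$ by the grope-from-satellite construction of \cite[\S~7-8]{cochran2003knot}. Equivariance under the amphichiral involution forces $K_p$ to be concordant to $-K_p$, so $2[K_p]=0$ in $\C$. The same symmetry, combined with the standard naturality of PTFA $\rhot$-invariants under orientation-reversing diffeomorphisms, forces $\rhot(M_{K_p},\phi)=0$ for every PTFA representation $\phi$, so all PTFA obstructions of \cite{cochran2003knot, cochran2011primary} vanish on $K_p$, as the statement of the theorem requires.

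To show that no nontrivial $\Z_2$-linear combination of the $K_p$ lies in $\G_{n+2.5}$, I would invoke the amenable $L^2$-signature obstruction of Cha \cite{cha2010amenable, cha2012l2}, applied to representations into amenable groups of the form $\Gamma\rtimes\Z_{p^\ell}$ in Strebel's class. Unlike PTFA signatures, the amenable $\rhot$ for such a representation admits a Casson--Gordon-style decomposition into contributions indexed by characters of the $\Z_{p^\ell}$ factor, and these contributions need not cancel under mirroring; in the same spirit as \cite{livingston1999order, cochran20112}, they can be nontrivial on 2-torsion classes. A primary-decomposition choice of seeds $J_p$ and infection patterns, following \cite{cochran2011primary}, would arrange that the amenable signature at the prime $p_j$ detects $K_{p_j}$ while vanishing on $K_{p_i}$ for $i\ne j$, producing the required $\Z_2^\infty$.

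The principal difficulty will be balancing symmetry at different depths of the derived tower. Amphichirality is needed at the ``outer'' level to force $2[K_p]=0$ and to kill PTFA obstructions, while at an ``inner'' level --- in the $p$-torsion of the higher-order Alexander module --- enough asymmetry must survive to keep some amenable $\rhot$ nonzero. Making these compatible, and in particular establishing the amenable-signature vanishing theorem for knots in $\G_{n+2.5}$ (a strengthening of Cha's $\F_{n+0.5}$-vanishing theorem to the grope setting, available because $\G_{n+2.5}\subseteq\F_{n+0.5}$ by \cite[Theorem~8.11]{cochran2003knot}), is the technical heart of the proof.
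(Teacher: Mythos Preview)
Your proposal has a genuine gap at the step where you claim amphichirality forces all PTFA $\rhot$-invariants to vanish. Negative amphichirality of $K$ gives an orientation-reversing self-diffeomorphism $f$ of $M(K)$, hence $\rhot(M(K),\phi)=-\rhot(M(K),\phi\circ f_*)$; but $\phi\circ f_*$ is in general a \emph{different} PTFA representation, so nothing forces $\rhot(M(K),\phi)=0$. Indeed this is exactly why Cochran--Harvey--Leidy's negatively amphichiral knots in \cite{cochran20112} \emph{are} detected by PTFA $\rhot$-invariants, as the paper emphasizes. The vanishing of PTFA obstructions in the theorem is obtained by a completely different mechanism: one builds the innermost infection knots $J^0_i$ so that their Levine--Tristram signature \emph{integral} $\int_{S^1}\sigma_{J^0_i}=0$ (using Horn's height-$2$ grope knots and a cabling trick, Proposition~\ref{prop:knot-of-cha}); then the special $(n)$-solution decomposes so that every PTFA $\rhot$ factors through an abelian $\rho$-invariant of some $J^0_i$, which is this integral and hence vanishes.

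The nonvanishing of the amenable obstruction then comes not from ``asymmetry surviving mirroring'' but from the target group acquiring $d$-torsion in its last subquotient: the abelian $\rho$-invariant of $J^0_i$ over such a group is the finite sum $\sum_{r}\sigma_{J^0_i}(e^{2\pi i r/d})$ rather than the integral, and the $J^0_i$ are built so that this sum is large at the chosen prime $d_i$ and zero at the primes $d_j$ with $j<i$. This is also how the paper separates the $J_i$'s and gets $\Z_2^\infty$ without invoking the primary decomposition machinery of \cite{cochran2011primary}; your plan to separate primes via Alexander-module primary parts and \cite{cochran2011primary} would at best reproduce the CHL approach, not the stronger ``vanishing PTFA obstruction'' conclusion. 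Finally, note that membership in $\G_{n+2}$ really uses that the innermost infection knots bound height-$2$ gropes (Horn's knots), not merely that the patterns are ribbon.
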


Discussions on (PTFA) $L^2$-signature obstructions and vanishing PTFA $L^2$-signature obstructions property appear at the end of this section and Section~\ref{section:preliminaries}.

Note that there is another concept of \emph{knots bounding (symmetric) gropes of height $h$ in $D^4$}, which can be viewed as an approximation of having slice disks (see Section~7 of \cite{cochran2003knot}).
Similarly to the grope case, the sets of concordance classes of knots bounding Whitney towers of height $h$, for every half integer $h\ge1$, forms a filtration on $\C$.
It is known that a knot bounding a grope of height $h$ also bounds a Whitney tower of the same height (see \cite[Corollary~2]{schneiderman2006whitney}).
Hence our examples generate $\Z_2^\infty$ subgroups in the successive quotients of Whitney tower filtration as~well.

To construct knots in Theorem~\ref{theorem:maintheorem1}, we adopt the iterated infection construction in a similar way to the Cochran, Harvey, and Leidy's method in \cite{cochran20112} but with different infection knots.
Using the infinite set of knots bounding gropes of height 2 in $D^4$ constructed by P. Horn \cite{horn2010the} and the infection knots used in \cite{cha2010amenable} as our basic ingredients, we can find infection knots with desired properties.

Figure~\ref{figure:example1} illustrates an order 2 knot bounding a grope of height 4, but not height 4.5, in $D^4$ (not necessarily with vanishing PTFA $L^2$-signature obstructions).
Note that it is constructed by the iterated satellite construction ${K^k}_{\eta_k}(K_{\a,\b}(J,\ol{J}))$, where $K$ and $K^k$ are as in Figure~\ref{figure:K} and \ref{figure:K^k} and $J$ is the Cochran-Teichner's knot bounding height 2 (see \cite[Figure~3.6]{cochran2007knot}).
The detailed explanation on the construction will be given later.

\begin{figure}[htbp]
   \labellist
   \hair 0mm
   \pinlabel{$2788531200$ copies} at 237 302
   \pinlabel{$2788531200$ copies} at 237 262
   \pinlabel{$\dots$} at 237 548
   \pinlabel{$\dots$} at 237 440
   \pinlabel{$\dots$} at 237 130
   \pinlabel{$\dots$} at 237 14
   \endlabellist
   \centering
   \includegraphics{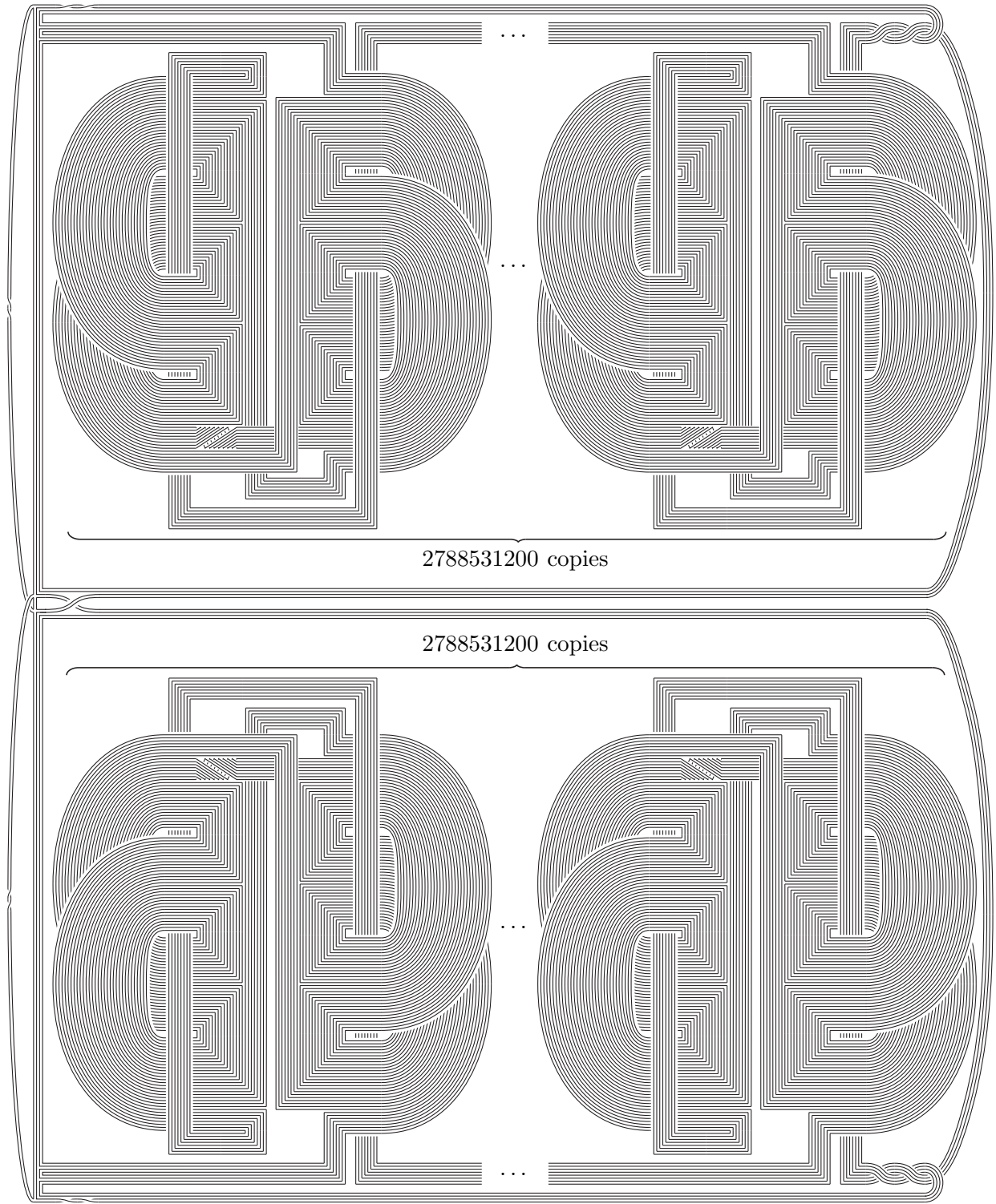}
   \caption{An example of order 2 element in $\G_4/\G_{4.5}$}
   \label{figure:example1}
\end{figure}

In the proof of Theorem~\ref{theorem:maintheorem1}, to show certain knots are not in $\G_{n.5}$ we actually prove that they are not $(n.5)$-solvable.
That $\F_h$ is contained in $\G_h$ implies that the knots in Theorem~\ref{theorem:maintheorem1} generating a $\Z_2^\infty$ subgroup of $\G_n/\G_{n.5}$ also generate a $\Z_2^\infty$ subgroup of $\F_n/\F_{n.5}$.

Recall that Cochran, Harvey, and Leidy found knots which generate a $\Z_2^\infty$ subgroup of $\F_n/\F_{n.5}$.
We prove that our knots are distinct from Cochran, Harvey, and Leidy's knots in $\F_n/\F_{n.5}$:

\begin{maintheorem}\label{theorem:maintheorem3}
For any integer $n\geq2$, there are $(n)$-solvable knots with vanishing PTFA $L^2$-signature obstructions which generate a $\Z_2^\infty$ subgroup of $\F_n/\F_{n.5}$.
This subgroup has trivial intersection with the subgroup generated by Cochran, Harvey and  Leidy's knots in~\cite{cochran20112}.
\end{maintheorem}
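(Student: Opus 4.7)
My approach is to deduce the first sentence directly from Theorem~\ref{theorem:maintheorem1} and to prove the intersection statement by a standard additivity argument for $L^2$-signature obstructions.

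For the first sentence, I would observe that the knots produced in the proof of Theorem~\ref{theorem:maintheorem1} are $(n)$-solvable and have vanishing PTFA $L^2$-signature obstructions, and, as noted after that theorem, the proof actually establishes that no nontrivial $\Z_2$-linear combination of them is $(n.5)$-solvable. Hence they already generate a $\Z_2^\infty$ subgroup of $\F_n/\F_{n.5}$, and the only new content of Theorem~\ref{theorem:maintheorem3} is the disjointness from the Cochran--Harvey--Leidy (CHL) subgroup.

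For the intersection statement, denote my knots by $\{K_i\}$ and the CHL knots of \cite{cochran20112} by $\{L_j\}$. Suppose that
\[
   X\;:=\;\bigconnsum_i a_i K_i \,\#\, \bigconnsum_j b_j L_j
\]
lies in $\F_{n.5}$ for some $a_i,b_j\in\{0,1\}$. The plan is to test $X$ against the specific PTFA $L^2$-signature obstructions that CHL use in \cite{cochran20112} to detect their $L_j$. Because these obstructions vanish on every $(n.5)$-solvable knot, they vanish on $X$; because they are additive under connected sum, and because by construction each $K_i$ has vanishing PTFA obstructions of this type, the only possible nonzero contribution to the obstruction of $X$ comes from $\bigconnsum_j b_j L_j$. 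Therefore CHL's obstructions vanish on $\bigconnsum_j b_j L_j$ as well, and invoking their independence argument in \cite{cochran20112} this forces $b_j=0$ for every $j$. Substituting back, $\bigconnsum_i a_i K_i \in \F_{n.5}$, so by what was proved in the first step, $a_i=0$ for every $i$.

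The main technical obstacle is to make precise the sense in which the CHL obstructions fall inside the ``vanishing PTFA $L^2$-signature'' class built into the $K_i$. Concretely, I need to verify that every PTFA coefficient system that CHL use to detect some $L_j$ picks out nothing from the infection knots entering my construction. Since those infection knots come from Horn's height-$2$ knots of \cite{horn2010the} together with the amenable building blocks of \cite{cha2010amenable}, the verification reduces to a primary-decomposition/Alexander-polynomial ``orthogonality'' between the two families of infection knots; arranging this orthogonality at every stage of the iterated infection, and checking that it survives the usual functoriality/extension arguments for PTFA representations across a bounding $4$-manifold, is the heart of the proof.
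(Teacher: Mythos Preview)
Your first paragraph is correct and matches the paper. The intersection argument, however, is more complicated than necessary and its final paragraph misidentifies the technical issue.

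The paper's proof is a two-line group-theoretic argument using the intermediate subgroup $\mathcal{V}_n$ of \cite{cha2010amenable}: since $\mathcal{V}_n$ is a \emph{subgroup} of $\C$ with $\F_{n.5}\le\mathcal{V}_n\le\F_n$ (\cite[Proposition~4.8]{cha2010amenable}), the $J_i$ generate a subgroup $A$ contained in $\mathcal{V}_n/\F_{n.5}$. On the other hand, CHL's argument in \cite{cochran20112} actually proves the stronger statement that no nontrivial combination of their knots lies in $\mathcal{V}_n$, so their subgroup $B$ injects into $\F_n/\mathcal{V}_n$. Any element of $A\cap B$ then maps to zero in $\F_n/\mathcal{V}_n$, hence is trivial in $B$, hence trivial.

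Your ``additivity'' strategy would, if carried out, essentially reprove the subgroup property of $\mathcal{V}_n$: one caps off $X$ with the assumed $(n.5)$-solution and caps off each $K_i$ with the specific $(n)$-solution witnessing its vanishing PTFA property, and then CHL's representation restricted to each $K_i$-piece has zero signature defect by definition of $\mathcal{V}_n$. The point is that the vanishing PTFA condition for the $K_i$ holds for \emph{every} PTFA representation extending over that particular solution, so no ``orthogonality'' between your infection knots and CHL's coefficient systems needs to be arranged or checked. Your last paragraph, proposing a primary-decomposition/Alexander-polynomial compatibility verification at each stage of the iterated infection, is therefore unnecessary work aimed at a non-issue; the actual missing ingredient in your outline is simply to cite (or reprove) that $\mathcal{V}_n$ is a subgroup.
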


Cochran, Harvey, and Leidy's knots do not have the property of vanishing PTFA $L^2$-signature obstructions.
(Recall that, in\cite{cochran20112}, Cochran, Harvey, and Leidy's knots have been shown not to be  $(n.5)$-solvable by calculating a nonzero PTFA $L^2$-signature obstruction.)
The property of PTFA $L^2$-signature obstructions of our knots is a distinct feature of our knot from Cochran, Harvey, and Leidy's knots and this implies the second part of Theorem~\ref{theorem:maintheorem3}.

\subsubsection*{Results on the primary decomposition structures}

In \cite{cochran2011primary} they defined the concept of \emph{$(h,\P)$-solvability of knots} which is a generalization of $(h)$-solvability.
Here $h$ is a nonnegative half integer and $\P=(p_1(t),p_2(t),\dots, p_{\lfloor h\rfloor}(t))$ is an $\lfloor h\rfloor$-tuple of Laurent polynomials $p_i(t)$ over $\Q$.
They showed that the subgroups $\F_h^\P\le\C$ of $(h,\P)$-solvable knots form another filtration on $\C$ which is coarser than $\{\F_h\}$, that is, $\F_h\leq\F^\P_h$.

Hence the quotient maps induce
\[\frac{\F_n}{\F_{n.5}}\,\to\,\prod_\P\,\,\frac{\F_n}{\F_{n.5}^\mathcal{P}\cap\F_n},\]
where the product is taken over all $n$-tuples $\P$ as above.
This is called the \emph{primary decomposition structures} of the knot concordance group by Cochran, Harvey, and Leidy~\cite{cochran2011primary}.

They used the primary decomposition structures crucially in their proof of the existence of $\Z_2^\infty$ subgroup of $\F_n/\F_{n.5}$ as follows.
For each $\P$, with some additional conditions, they show that there is a negatively amphichiral $(n)$-solvable knot which does not vanish in ${\F_n}/({\F_{n.5}^\P\cap\F_n})$ but vanishes in ${\F_n}/({\F_{n.5}^\mathcal{Q}\cap\F_n})$ for any $n$-tuple $\mathcal{Q}$ which is \emph{strongly coprime to $\P$.}
(For the definition of strong coprimeness and precise conditions required for $\P$, see Section~\ref{section:hp-solvability} or \cite[Definition~4.4]{cochran2011primary}.)
By using an infinite set of $\P$ which are pairwise strongly coprime, they showed the existence of $\Z_2^\infty$ subgroup of $\F_n/\F_{n.5}$.

On the other hand, our proof of Theorems~\ref{theorem:maintheorem1} and \ref{theorem:maintheorem3} does not involve the use of the primary decomposition structures.
This might be regarded as a simpler proof to the existence of a $\Z_2^\infty$ subgroup of $\F_n/\F_{n.5}$.
We are able to do that by using the recently introduced \emph{amenable} $L^2$-signature obstructions in the study of the knot concordance by Cha and Orr which we explain later.

Using our method, we also find new 2-torsion elements in the primary decomposition structures.
A corollary of Cochran, Harvey, and Leidy's result \cite[Theorem~5.3 and Threorem~5.5]{cochran20112-torsion} is that for any $n\ge2$ there is a set $S$ of infinitely many $n$-tuples $\P$ of the form $(0,p_2(t),\dots,p_n(t))$ such that, for any $\P\in S$, there are infinitely many $(n)$-solvable knots of order $2$ which
\begin{enumerate}
\item generate a $\Z_2^\infty$ subgroup in  $\F_n/(\F_{n.5}^\P\cap\F_n)$, but
\item vanish in $\F_n/(\F_{n.5}^\mathcal{Q}\cap\F_n)$, for any $\mathcal{Q}\in S$, $\mathcal{Q}\neq\P$.
\end{enumerate}
A precise description of $S$ appears in Section~\ref{section:proof-of-theorem-B}.
In this setting, we show the following:

\begin{maintheorem}\label{theorem:maintheorem2}
For each $\P\in S$, there are infinitely many $(n)$-solvable knots which satisfy above (1) and (2), but generate a $\Z_2^\infty$ subgroup in $\F_n/(\F_{n.5}^\P\cap\F_n)$ having trivial intersection with Cochran, Harvey, and Leidy's subgroup.
\end{maintheorem}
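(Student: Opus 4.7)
The plan is to produce the required knots by the same iterated infection construction used in Theorems~\ref{theorem:maintheorem1} and \ref{theorem:maintheorem3}, but with the seed and infection data chosen so that the resulting Alexander modules at each stage are compatible with the prescribed $n$-tuple $\P=(0,p_2(t),\dots,p_n(t))\in S$. Concretely, I would start from a negative amphichiral seed knot and iterate $n$ infections along curves whose classes in the successive rational Alexander modules have orders divisible by the respective $p_i(t)$, using as infection knots the connected sums $J\#\overline{J}$ of Horn's height-$2$ grope knots employed in the construction for Theorem~\ref{theorem:maintheorem1}. Because each infection knot has order $2$ and the entire construction is built symmetrically, the resulting knots are $(n)$-solvable of order at most $2$ in $\C$ with vanishing PTFA $L^2$-signature obstructions, by the same argument as in the proofs of Theorems~\ref{theorem:maintheorem1} and \ref{theorem:maintheorem3}.

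Next I would verify items~(1) and (2) of the statement. Since all PTFA $L^2$-signatures vanish on these knots by construction, item~(1), the non-$(n.5,\P)$-solvability of each nontrivial $\Z_2$-combination, cannot be detected by PTFA methods; instead, I would invoke the amenable $L^2$-signature obstructions of Cha--Orr, already used in the proofs of Theorems~\ref{theorem:maintheorem1} and \ref{theorem:maintheorem3}, now applied in the $\P$-localized setting. The key input is that the amenable obstructions detect infection loops whose orders in the relevant noncommutative Alexander modules are divisible by the prescribed $p_i(t)$, so they survive localization at~$\P$. Item~(2), the vanishing in $\F_n/(\F_{n.5}^\Q\cap\F_n)$ for $\Q\in S\setminus\{\P\}$, follows from strong coprimeness between $\P$ and $\Q$ in the sense of \cite{cochran2011primary}: the infection curves become trivial in the $\Q$-localized Alexander modules, so each combination of our knots is $(n.5,\Q)$-solvable via the standard argument (see~\cite[Section~4]{cochran2011primary}).

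The new content of Theorem~\ref{theorem:maintheorem2} is the trivial intersection with the Cochran--Harvey--Leidy subgroup, and here the argument mirrors the one I would use for the second part of Theorem~\ref{theorem:maintheorem3}. Suppose a nontrivial $\Z_2$-combination $K$ of our knots agreed, modulo $\F_{n.5}^\P\cap\F_n$, with a $\Z_2$-combination $K'$ of CHL's knots from \cite{cochran20112}. Then $K\#K'\in\F_{n.5}^\P\cap\F_n$, so all PTFA $L^2$-signature obstructions of $K\#K'$ along representations factoring through the $\P$-localized derived series must vanish. For CHL's representations detecting $K'$, the contribution of $K$ is zero by the vanishing PTFA $L^2$-signature obstruction property, while the contribution of $K'$ is the nonzero value computed in \cite{cochran20112}, producing a contradiction.

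The main technical obstacle is ensuring that the amenable $L^2$-signature obstruction arguments used in Theorems~\ref{theorem:maintheorem1} and \ref{theorem:maintheorem3} survive the passage to $(h,\P)$-solvability. This requires checking that the injectivity lemmas for infection-induced maps on the relevant mixed-coefficient Alexander modules remain valid after localization at~$\P$, and that strong coprimeness continues to force the necessary vanishings in the amenable setting. These adaptations should be essentially direct extensions of the arguments of Cochran--Harvey--Leidy in~\cite{cochran2011primary}, but the bookkeeping is the technically delicate part of the proof; everything else reduces to a careful repackaging of the ingredients already assembled for Theorems~\ref{theorem:maintheorem1} and \ref{theorem:maintheorem3}.
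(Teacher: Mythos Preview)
Your overall strategy matches the paper's: iterated infection with seed and intermediate knots chosen so that the Alexander modules at each stage match the entries of~$\P$, detection via amenable $L^2$-signature obstructions adapted to a $\P$-localized normal series (the paper's $\T_k$ in place of the $\S_k$ from Theorem~\ref{theorem:maintheorem1}, with the analogue of Lemma~\ref{Lemma:nontriviality} being the key technical step), the strong coprimeness argument from~\cite{cochran20112-torsion} for item~(2), and the vanishing PTFA property for the trivial-intersection claim.

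There is, however, a misdescription of the construction that you should fix. The order-$2$ property does \emph{not} come from the innermost infection knots being of the form $J\#\overline{J}$ or having order~$2$; the $J^0_i$ of Proposition~\ref{prop:knot-of-cha} are not amphichiral and need not have finite order. The negative amphichirality is engineered at the \emph{top} level: one takes $K=\mathbb{E}_m\#\mathbb{E}_m$ (with $m$ determined by $p_n(t)$) and infects along the two axes $\alpha,\beta$ with $J^{n-1}$ and its mirror $\overline{J^{n-1}}$; this is what makes $J^n$ negatively amphichiral (see~\cite[Lemma~2.1]{cochran20112-torsion}). The intermediate $K^k$ for $1<k<n$ are slice knots with cyclic Alexander module $\Z[t^{\pm1}]/\langle p_k(t)\rangle$, and $K^1$ is chosen with Alexander polynomial nonunit modulo every prime so that the $\Z_d$-coefficient step of the nontriviality lemma goes through. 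Once you correct this, your sketch of the detection argument, the localization bookkeeping, and the trivial-intersection argument are all in line with what the paper does.
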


\subsubsection*{The obstructions used: amenable $L^2$-signature defects}
The obstructions we use to detect non-$(n.5)$-solvable, or non-$(n.5,\P)$-solvable knots are amenable von Neumann $\rho$-invariants, and equivalently, $L^2$-signature defects.

It has been known that certain $\rho$-invariants of the zero
framed surgery on an $(n.5)$-solvable knot over PTFA groups vanish (\cite[Theorem~4.2]{cochran2003knot}, Theorem~\ref{thm:homology-cobordism-invariance-on-PTFA}).
They have been used as the key ingredient to detect many non-$(n.5)$-solvable knots.
Cochran, Harvey, and Leidy extended the vanishing theorem of PTFA $\rho$-invariants to $(n.5,\P)$-solvable knots and found their 2-torsion elements.

In 2009, J. Cha and K. Orr \cite{cha2012l2} extended the homology cobordism invariance of $\rho$-invariants to \emph{amenable groups lying in Strebel's class}.
Since then amenable $\rho$-invariants have been used for the study of various problems on homology cobordism and knot concordance.
For example, Cha and Orr \cite{cha2013hidden} found infinitely many hyperbolic 3-manifolds which are not pairwisely homology cobordant but cannot be distinguished by any previously known methods.
Also, Cha, Friedl, and Powell \cite{cha2012symmetric, cha2013non-concordant, cha2012concordance} detected many non-concordant links which have not been detected previously.
Especially, Cha \cite{cha2010amenable} found a subgroup of $\F_n/\F_{n.5}$ isomorphic to $\Z^\infty$, which cannot be detected by any PTFA $\rho$-invariants.

In this paper, we utilize amenable $\rho$-invariants to detect 2-torsion elements.
The use of amenable groups as the image of group homomorphisms enables us to detect various 2-torsion elements described in Theorems~\ref{theorem:maintheorem1}, \ref{theorem:maintheorem3},  and \ref{theorem:maintheorem2}, which are unlikely to be detected by using PTFA $L^2$-signature obstructions.
Also, the fact that there are a lot more independent amenable $\rho$-invariants enables us to show the existence of $\Z_2^\infty$ subgroups in Theorems~\ref{theorem:maintheorem1}, \ref{theorem:maintheorem3},  and \ref{theorem:maintheorem2} without using the primary decomposition structures.
In particular, we use amenable $\rho$-invariant over groups with $p$-torsion for various choices of prime $p$ in the proof.

We also employ the recent result of Cha on Cheeger-Gromov bounds \cite{cha2014a}.
Note that Cochran, Harvey, and Leidy's 2-torsion knots are not fully constructive because the explicit number of connected summands of infection knots needed was unknown.
It was totally due to the absence of an explicit estimate of the universal bound for $\rho$-invariants of a 3-manifold (see Theorem~\ref{thm:cheeger-gromov-bound}).
Recently, Cha found an explicit universal bound in terms of topological descriptions of a 3-manifold (see Theorem~\ref{theorem:explicit-cheeger-gromov}).
Using this result, we can present our example in a fully constructive way.
We remark that not only our knots but also many previously known non-$(n.5)$-solvable knots can be modified to be fully constructive.

\vskip 5mm

This article is organized as follows.
In Section~\ref{section:preliminaries}, we provide definitions and properties of von Neumann $\rho$-invariants of closed 3-manifolds and $L^2$-signature defects of 4-manifolds, which will be used in this paper.
In Section~\ref{section:construction}, we give a specific construction of knots bounding gropes of height $n+2$ in $D^4$ which will be the prototype of 2-torsion knots in Theorems~\ref{theorem:maintheorem1}, \ref{theorem:maintheorem3}, and~\ref{theorem:maintheorem2}.
In Sections~\ref{section:proof-of-theorem-A}  Theorems~\ref{theorem:maintheorem1} and~\ref{theorem:maintheorem3} are proven.
We discuss the definition of the refined filtration related to a tuple of integral Laurent polynomials of Cochran, Harvey, and Leidy in Section~\ref{section:hp-solvability}.
In Section~\ref{section:proof-of-theorem-B}, we prove Theorem~\ref{theorem:maintheorem3}.

\subsection*{Acknowledgements}
The author thanks her advisor Jae Choon Cha for guidance and encouragement on this project.
The author is partially supported by NRF grants 2013067043 and 2013053914.
%This paper was partially written while the author was visiting the Max Planck Institute for Mathematics in Bonn, where we thank for its support and hospitality.

\section{Preliminaries on von Neumann $\rho$-invariants}\label{section:preliminaries}

The \emph{von Neumann $\rho$-invariants on closed 3-manifolds}, and equivalently, the \emph{$L^2$-signature defects on 4-manifolds} can be used as obstructions for a knot to being $(n.5)$-solvable \cite{cochran2003knot,cha2010amenable}.
In this section, we give a brief introduction to these invariants and their properties which are useful for our purposes. 
For more details, we recommend \cite[Section~5]{cochran2003knot}, \cite[Section~2]{cochran2007knot}, and \cite[Section~3]{cha2010amenable}.

Let $M$ be a closed, smooth, oriented 3-manifold with a Riemannian metric $g$.
Let $G$ be a countable group and $\phi\colon \pi_1(M)\to G$ a group homomorphism.
Let $\eta(M, g)$ be the $\eta$-invariant of the odd signature operator of $(M,g)$.
Cheeger and Gromov defined the von Neumann $\eta$-invariant $\eta^{(2)}(M,\phi,g)$ by lifting the metric $g$ and the signature operator to the $G$-cover of $M$ and using the von Neumann trace.
Then the \emph{von Neumann $\rho$-invariant $\rho(M,\phi)$} is defined as the difference between $\eta(M,g)$ and $\eta^{(2)}(M,\phi,g)$.
It is known that $\rho(M,\phi)$ is a real-valued topological invariant which does not depend on the choice of $g$ \cite{cheeger1985bounds}.

On the other hand, let $W$ be a 4-manifold and $\phi\colon\pi_1(W)\to G$ be a group homomorphism.
Then there is an invariant so-called the \emph{$L^2$-signature defect} of $(W,\phi)$, which is denoted by $S(W,\phi)$ in this article.
It is known that $S(W,\phi)$ is equal to $\rho(\d W,\phi)$ where $\phi$ is the restriction of $\phi\colon W\to G$ on $\d W$ \cite{mathai1992spectral,ramachandran1993von}.
In this article this fact will be used frequently.

The following theorem of Cochran, Orr, and Teichner enables to use $\rho$-invariants as obstructions for a knot being $(n.5)$-solvable:

\begin{theorem}\cite[Theorem~4.2]{cochran2003knot}\label{thm:homology-cobordism-invariance-on-PTFA}
Let $K$ be an $(n.5)$-solvable knot and $M(K)$ be the zero
framed surgery on $K$.
Let $G$ be a PTFA group whose $(n+1)$-th derived subgroup $G^{(n+1)}$ vanishes, and $\phi\colon\pi_1(M(K))\to G$ be a group homomorphism which extends to an $(n.5)$-solution for $K$.
Then $\rho(M(K),\phi)$ vanishes.
\end{theorem}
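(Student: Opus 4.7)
The plan is to reinterpret $\rho(M(K),\phi)$ as the $L^2$-signature defect of an appropriate $(n.5)$-solution and then to show that both pieces comprising this defect vanish separately. By hypothesis there is an $(n.5)$-solution $W$ for $K$ together with an extension $\widetilde\phi\colon\pi_1(W)\to G$ of $\phi$. Since $\widetilde\phi$ restricts to $\phi$ on $\partial W=M(K)$, the identification $S(W,\widetilde\phi)=\rho(M(K),\phi)$ recalled in this section reduces the theorem to proving that $S(W,\widetilde\phi)=\sigma^{(2)}(W,\widetilde\phi)-\sigma(W)$ vanishes.

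First I would handle the ordinary signature. By the definition of $(n.5)$-solvability, $H_2(W;\Z)/\mathrm{torsion}$ admits a basis represented by embedded surfaces $L_1,\ldots,L_r,D_1,\ldots,D_r$, where $\{L_i\}$ form an $(n+1)$-Lagrangian and $\{D_i\}$ form $(n)$-duals. In particular the classes $[L_i]$ span a half-rank subspace on which the ordinary intersection form vanishes, so $\sigma(W)=0$.

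Next, to handle the $L^2$-signature, I would consider the twisted intersection form on $H_2(W;\mathcal{N} G)$ induced by $\widetilde\phi$. The hypothesis $G^{(n+1)}=\{1\}$ yields $\widetilde\phi(\pi_1(W)^{(n+1)})=\{1\}$; combined with $\pi_1(L_i)\subset\pi_1(W)^{(n+1)}$, each surface $L_i$ lifts to the $G$-cover and determines a preferred class $\widetilde L_i\in H_2(W;\Z G)$. Because the $L_i$ may be chosen geometrically disjoint and their lifts live over trivial group elements, the twisted intersection pairings $\widetilde L_i\cdot\widetilde L_j$ all vanish, so the $\mathcal{N} G$-submodule $\mathcal{L}$ generated by the images of the $\widetilde L_i$ is isotropic for the $L^2$-intersection form. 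Using the PTFA structure of $G$, $\Z G$ embeds into a skew field $\K G$, and a comparison of $\K G$-ranks via Poincar\'e--Lefschetz duality together with the data furnished by the $(n)$-duals $D_j$ shows that the von Neumann dimension of $\mathcal{L}$ equals exactly half of $\dim^{(2)} H_2(W;\mathcal{N} G)$. Consequently $\mathcal{L}$ is a von Neumann Lagrangian, so $\sigma^{(2)}(W,\widetilde\phi)=0$, and combining with $\sigma(W)=0$ gives $\rho(M(K),\phi)=S(W,\widetilde\phi)=0$.

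The hard part is the final dimension count: verifying that the submodule generated by the lifted $(n+1)$-Lagrangian $\{\widetilde L_i\}$ occupies exactly half of the total $\mathcal{N} G$-dimension of $H_2(W;\mathcal{N} G)$. Both hypotheses enter essentially here. The PTFA condition is needed in order to pass to the skew field of fractions $\K G$ and to track ranks through chain-level Poincar\'e duality, while the derived-series condition $G^{(n+1)}=\{1\}$ is what guarantees that the $(n+1)$-Lagrangian actually lifts to the twisted coefficients; if either condition were dropped, no such Lagrangian half-basis would be forced to exist and the $L^2$-signature could a priori be nonzero.
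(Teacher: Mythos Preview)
The paper does not prove this statement; it is quoted as background from \cite{cochran2003knot} and used as a black box. So there is no in-paper proof to compare your proposal against.

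That said, your sketch follows the original Cochran--Orr--Teichner argument closely and identifies the correct architecture: reduce $\rho$ to the $L^2$-signature defect of the $(n.5)$-solution $W$, kill the ordinary signature via the Lagrangian, lift the $(n+1)$-Lagrangian surfaces using $G^{(n+1)}=\{1\}$ to obtain an isotropic submodule, and then argue it is half-dimensional over $\mathcal{N}G$ by passing to the Ore quotient $\mathcal{K}G$ (which exists because $G$ is PTFA). You are also right that the dimension count is where the real content lies. One point you gloss over deserves care: the $(n)$-duals $D_j$ only satisfy $\pi_1(D_j)\subset\pi_1(W)^{(n)}$, so under $\widetilde\phi$ they land in $G^{(n)}$, which need not be trivial; hence the $D_j$ do not obviously lift to the $G$-cover as embedded surfaces. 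The original argument handles this by working at the level of the $\mathcal{K}G$-intersection form and using a rank comparison (via a chain-homotopy/Euler-characteristic argument showing $\dim_{\mathcal{K}G}H_2(W;\mathcal{K}G)=\dim_{\Q}H_2(W;\Q)=2r$), together with the fact that the equivariant intersection numbers $\widetilde L_i\cdot\widetilde D_j$ augment to $\delta_{ij}$, to conclude the $\widetilde L_i$ are $\mathcal{K}G$-independent. Your phrase ``the data furnished by the $(n)$-duals'' is pointing at exactly this, but in a full write-up you would need to make that step explicit.
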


Note that $G$ is assumed to be a \emph{PTFA} group, that is, it has a subnormal series
\[0=G^m\lhd G^{m-1}\lhd \dots G^1\lhd G^0= G\]
such that every $G^i/G^{i+1}$ is torsion-free abelian.

In \cite{cha2012l2}, Cha and Orr showed the homology cobordism invariance of $\rho$-invariants over \emph{amenable groups lying in Strebel's class $D(R)$} for a ring $R$, which will be called \emph{amenable $\rho$-invariants} in this paper.
Using the result, Cha extended the vanishing property of PTFA $\rho$-invariants in Theorem~\ref{thm:homology-cobordism-invariance-on-PTFA} for $(n.5)$-solvable knots to amenable $\rho$-invariants as follows:

\begin{theorem}\cite[Theorem~1.3]{cha2010amenable}\label{thm:homology-cobordism-invariance-on-amenable}
Let $K$ be an $(n.5)$-solvable knot.
Let $G$ be an amenable group lying in Strebel's class $D(R)$ where $R$ is $\Q$ or $\Z_p$, and $G^{(n+1)}=\{e\}$.
Let $\phi\colon\pi_1(M(K))\to G$ be a group homomorphism which extends to an $(n.5)$-solution for $K$ and sends the meridian of $K$ to an infinite order element in $G$.
Then $\rho(M(K),\phi)$ vanishes.
\end{theorem}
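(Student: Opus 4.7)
The plan is to reduce the vanishing of $\rho(M(K),\phi)$ to the amenable $L^2$-signature theorem of Cha and Orr, applied to the $(n.5)$-solution $W$ to which $\phi$ extends. First I would let $\tilde\phi\colon\pi_1(W)\to G$ denote the given extension; since $\partial W=M(K)$, Mathai--Ramachandran give $\rho(M(K),\phi)=S(W,\tilde\phi)$, so it suffices to show that the $L^2$-signature of $W$ over $G$ agrees with the ordinary signature $\sign(W)$.

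Next I would exploit the $(n.5)$-solution structure. By definition, $H_2(W;\Z)$ admits a basis $\{\ell_1,\dots,\ell_r,d_1,\dots,d_r\}$ of embedded oriented surfaces whose intersection pairing is the standard hyperbolic form, with the image of $\pi_1(\ell_i)$ contained in $\pi_1(W)^{(n+1)}$ and the image of $\pi_1(d_j)$ contained in $\pi_1(W)^{(n)}$. The assumption $G^{(n+1)}=\{e\}$ forces $\tilde\phi$ to kill each $\pi_1(\ell_i)$, so the surfaces $\ell_i$ lift to disjointly embedded closed surfaces in the $G$-cover of $W$. Consequently the classes $[\ell_i]$ span a half-rank totally isotropic summand of the twisted intersection form of $W$, in a sense to be made precise using the $L^2$-cohomology framework of Cha--Orr over amenable groups.

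The crux is then to invoke the amenable version of the ``half-lives-half-dies'' lemma: for $G$ amenable in Strebel's class $D(R)$ with $R\in\{\Q,\Z_p\}$, Cha and Orr showed that such a half-basis forces the $L^2$-signature of the twisted intersection form to coincide with the ordinary signature. The hypothesis that $\phi$ sends the meridian of $K$ to an element of infinite order in $G$ is invoked precisely here, to guarantee the non-degeneracy needed for the $L^2$-coefficient module to behave well under the derived-quotient filtration (in particular so that $\tilde\phi$ does not factor through a subgroup where the half-lives-half-dies argument collapses). Applying the lemma yields $S(W,\tilde\phi)=0$, hence $\rho(M(K),\phi)=0$.

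The hardest input is the amenable $L^2$-signature machinery from \cite{cha2012l2}: it requires verifying that the group ring $\Z G$ admits an appropriate embedding into a suitable ring of unbounded operators so that twisted Poincar\'e duality, the long exact sequence of the pair $(W,\partial W)$ and the half-lives-half-dies lemma all remain available even when $G$ has torsion. Granting that machinery as a black box, the rest of the proof is a formal adaptation of the PTFA argument for Theorem~\ref{thm:homology-cobordism-invariance-on-PTFA}, substituting the amenable-in-$D(R)$ hypothesis at the single step where Cochran--Orr--Teichner used the PTFA hypothesis on $G$.
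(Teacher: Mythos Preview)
The paper does not supply its own proof of this theorem: it is quoted verbatim as \cite[Theorem~1.3]{cha2010amenable} and used throughout as a black box, so there is nothing in the paper to compare your argument against. Your outline is broadly in the spirit of how the result is established in the cited reference---reduce $\rho(M(K),\phi)$ to $S(W,\tilde\phi)$ and then use the $(n.5)$-Lagrangian together with $G^{(n+1)}=\{e\}$ and the amenable/Strebel machinery of Cha--Orr to kill the $L^2$-signature defect---but a couple of your attributions are imprecise. The phrase ``half-lives-half-dies'' is not the mechanism here; what is actually used is that the lifted $(n+1)$-Lagrangian and its duals furnish a metabolizer for the $L^2$-intersection form, and the Strebel-class hypothesis ensures the relevant chain-level maps stay injective so that rank counts match. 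Likewise, the infinite-order meridian condition is not about preventing the map from factoring through a bad subgroup; it is needed so that the twisted homology $H_*(M(K);\mathcal{N}G)$ vanishes (equivalently, so that the ordinary intersection form on $W$ is already computed by the Lagrangian/dual basis), which is what allows one to conclude $\sign(W)=0$ and hence $S(W,\tilde\phi)=0$. If you want a genuine proof rather than a sketch, you should consult \cite{cha2010amenable} directly; the present paper does not reproduce it.
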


The definition of amenable groups lying in Strebel's class will not be presented in this paper since the following lemma is sufficient for our purposes:

\begin{lemma}\cite[Lemma~6.8]{cha2012l2}\label{lemma:amenable_group}
Suppose $G$ is a group admitting a subnormal series
\[0=G^m\lhd G^{m-1}\lhd \dots G^1\lhd G^0= G\]
whose quotient $G^{i}/G^{i+1}$ is abelian.
Let $p$ be a prime.
If every $G^i/G^{i+1}$ has no torsion coprime to $p$, then $G$ is amenable and in $D(\Z_p)$.
If every $G^i/G^{i+1}$ is torsion-free, then $G$ is amenable and in $D(R)$ for any ring~$R$.
\end{lemma}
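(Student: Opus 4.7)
The plan is to prove both assertions by induction on the length $m$ of the subnormal series, treating amenability and membership in Strebel's class $D(R)$ separately but with the same inductive scheme. The key structural facts I would rely on are: (i) the class of amenable groups is closed under group extensions and contains all abelian groups; (ii) Strebel's class $D(R)$ is closed under extensions in the sense that if $1\to N\to G\to Q\to 1$ is exact with $N,Q\in D(R)$, then $G\in D(R)$; and (iii) torsion-free abelian groups belong to $D(R)$ for any ring $R$, while abelian groups all of whose torsion is $p$-primary belong to $D(\Z_p)$.

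For amenability, the base case $m=1$ means $G=G^0/G^1$ is abelian, hence amenable. For the inductive step, the subnormal series $0=G^m\lhd\cdots\lhd G^1$ has length $m-1$ with abelian quotients, so by induction $G^1$ is amenable; since $G/G^1$ is abelian, it too is amenable, and the extension $1\to G^1\to G\to G/G^1\to 1$ yields amenability of $G$. Note that the torsion conditions are not needed for this half.

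For the Strebel-class assertion, the induction runs analogously. The base case is supplied by fact (iii): under the hypothesis that $G$ itself is abelian with no torsion coprime to $p$ (resp.\ torsion-free), $G$ lies in $D(\Z_p)$ (resp.\ $D(R)$). For the inductive step, apply the inductive hypothesis to the shorter subnormal series of $G^1$, which inherits the torsion condition on its quotients, to obtain $G^1\in D(R)$; the quotient $G/G^1$ is abelian satisfying the torsion condition and so lies in $D(R)$ by the base case; then the extension closure (ii) gives $G\in D(R)$.

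The main obstacle is establishing facts (ii) and (iii), which are purely homological and constitute the technical core of Strebel's work. Fact (iii) reduces, via a direct-limit argument for abelian groups, to checking the defining diagram-chasing property of $D(R)$ on finitely generated abelian pieces of the allowed torsion type, where one can write down explicit projective resolutions and verify the required injectivity after tensoring with $R$. Fact (ii) is typically handled by comparing projective resolutions of $RG$-modules with those of $RN$- and $RQ$-modules via the Lyndon--Hochschild--Serre spectral sequence, translating the hypothesis on $N$ and $Q$ into the corresponding statement for $G$. Since in our setting this lemma is invoked as \cite[Lemma~6.8]{cha2012l2}, I would simply carry out the induction above and cite Strebel's original classification and extension results for (ii) and (iii) rather than reprove them.
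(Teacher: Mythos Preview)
The paper does not supply a proof of this lemma at all; it is stated with a citation to \cite[Lemma~6.8]{cha2012l2} and used as a black box. Your inductive outline---reducing amenability and membership in $D(R)$ to closure under extensions together with the abelian base case---is the standard argument and is essentially how the proof in the cited reference (and ultimately in Strebel's original paper) goes. So there is nothing to compare against here: you have sketched the proof the paper simply imports, and your identification of facts (ii) and (iii) as the technical core to be cited from Strebel is accurate.
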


As seen in this lemma, the class of amenable groups in Strebel's class $D(R)$ is much larger than and subsumes PTFA groups as a special case.

A natural question here is whether there is a knot whose non-$(n.5)$-solvability cannot be detected by PTFA $\rho$-invariants but can be detected by amenable $\rho$-invariants. In this context, Cha introduced the concept of \emph{$(n)$-solvable knots with vanishing PTFA $L^2$-signature obstructions}:

\begin{definition}\cite[Definition~4.7]{cha2010amenable}\label{def:vanishing-l2-signature-obstruction}
A knot $K$ is said to be \emph{$(n)$-solvable with vanishing PTFA $L^2$-signature obstructions} if there is an $(n)$-solution $W$ for $K$ such that for any PTFA group $G$ and for any group homomorphism $\phi\colon \pi_1(M(K))\to G$ which extends to $\pi_1(W)$, $\rho(M(K),\phi)$ vanishes.
\end{definition}

It turns out that the set $\mathcal{V}_n\le\C$ of classes of $(n)$-solvable knots with vanishing PTFA $L^2$-signature obstructions is a subgroup between $\F_n$ and $\F_{n.5}$ (see \cite[Proposition~4.8]{cha2010amenable}).
Cha showed that there are infinitely many classes of $(n)$-solvable knots with vanishing PTFA $L^2$-signature obstructions which are linearly independent in $\F_n/\F_{n.5}$. In other words, there is a subgroup of $\mathcal{V}_n/\F_{n.5}$ which is isomorphic to $\Z^\infty$ (Theorem~1.4 of \cite{cha2010amenable}).
Note that all of the previously known $(n)$-solvable knots which are not $(n.5)$-solvable (\cite{cochran2003knot, cochran2004structure, cochran2007knot,cochran2009knot, horn2010the, cochran2011primary, cochran20112-torsion}) are not in $\mathcal{V}_n$, since the proofs of their non-$(n.5)$-solvability are actually done by showing that they are not $(n)$-solvable with vanishing PTFA $L^2$-signature obstructions.
Hence Cha's knots are distinguished from any previously known nontrivial elements in $\F_n/\F_{n.5}$.

%Our main result is that there is also a subgroup of $\mathcal{V}_n/\F_{n.5}$ isomorphic to $\Z_2^\infty$, and then  Theorem~\ref{theorem:maintheorem1} follows as a corollary. Theorem~\ref{theorem:maintheorem2} is proven by giving more strict restriction on the construction of this subgroup.

We close this section with some useful properties about $\rho$-invariants and $L^2$-signature defects for our purposes.

\begin{theorem}\cite{cheeger1985bounds, cha2014a}\label{thm:cheeger-gromov-bound}
For any closed 3-manifold $M$, there is a constant $C_M>0$ such that, for any homomorphism $\phi\colon \pi_1(M)\to G$, $|\rho(M,\phi)|$ is less than $C_M$.
\end{theorem}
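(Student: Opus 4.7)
The plan is to establish this universal bound via the topological reformulation used by Cha, since the paper will rely on the explicit constructive form of $C_M$ in later sections. The starting point is the identity $\rho(M,\phi) = S(W,\tilde\phi)$, valid whenever $W$ is a $4$-manifold with $\partial W = M$ and $\tilde\phi\colon\pi_1(W)\to G$ restricts to $\phi$. This reduces the problem to bounding $L^2$-signature defects of bounding $4$-manifolds in a way that depends only on $M$, not on the representation $\phi$ or the target group $G$.

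First, I would fix a simplicial or handle decomposition $\tau$ of $M$ and use a standard bordism argument (coning, or explicit handle cancellation) to produce a $4$-manifold $W$ with $\partial W = M$ whose total cell-count is bounded by a universal function of the number of simplices of $\tau$. The next step is to accommodate an arbitrary $\phi\colon\pi_1(M)\to G$, which in general does not extend across this $W$. The idea is to enlarge or modify $W$ by attaching $2$-handles along generators of $\ker(\pi_1(M)\to\pi_1(W))$, producing a new $4$-manifold $W'$ with $\partial W' = M$ across which $\phi$ does extend, and whose cell-count is controlled by the generating set of $\ker$. Finally, I would estimate $|S(W',\tilde\phi)| = |\sigma^{(2)}(W'^{(G)}) - \sigma(W')|$ via the $L^2$-Betti and $L^2$-signature bounds, using the fact that the von Neumann dimension of each cellular chain module in the $G$-cover is bounded by the number of cells in $W'$ in the corresponding dimension, independently of the representation. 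This produces an inequality $|\rho(M,\phi)| < C_M$ where $C_M$ depends only on combinatorial data of $M$.

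Alternatively, the original analytic route due to Cheeger--Gromov fixes a Riemannian metric $g$ on $M$ and writes $\rho(M,\phi) = \eta(M,g) - \eta^{(2)}(M,\phi,g)$. Because the short-time heat kernel asymptotics of the odd signature operator and its $G$-equivariant lift agree locally, only the spectral data near zero contribute to the defect. One then exploits the positivity of the spectral gap of the lifted operator (together with the Novikov--Shubin invariants governing the spectral density near zero) to bound the difference purely in terms of the geometry of $(M,g)$, independently of $\phi$ and~$G$.

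The main obstacle in either approach is making the estimate uniform in $\phi$. In the topological version, the danger is that the $2$-handle surgery required to make $\phi$ extend could depend on $\phi$ in an uncontrolled way; the key input which overcomes this is the cellular bound for $L^2$-dimensions, which is insensitive to the coefficient module. In the analytic version, the difficulty is controlling the spectrum of the lifted signature operator near zero uniformly over all $G$-covers, which is precisely what the heat-kernel comparison combined with Novikov--Shubin estimates provides.
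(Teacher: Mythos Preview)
The paper does not prove this theorem; it is quoted from the literature (Cheeger--Gromov for existence, Cha for the explicit constructive form), and immediately after the statement the paper only remarks that ``Cheeger and Gromov's proof only shows the existence of such bound $C_M$, [while] Cha found an explicit $C_M$ when we are given a triangulation of~$M$.'' So there is no proof in the paper against which to compare your proposal.

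That said, your topological sketch contains a real gap in the step where you make an arbitrary $\phi$ extend. You propose to ``attach $2$-handles along generators of $\ker(\pi_1(M)\to\pi_1(W))$'' to obtain a $W'$ over which $\phi$ extends. This is backwards: the obstruction to extending $\phi$ is precisely that some element of this kernel may have \emph{nontrivial} image in $G$; attaching further $2$-handles only enlarges the kernel (and, if attached along curves in $\partial W$, changes the boundary). No handle modification of a fixed $W$ with bounded complexity will force every $\phi$ to extend. The actual mechanism in Cha's argument, following Chang--Weinberger, is group-theoretic rather than surgical: one embeds the image of $\phi$ into an acyclic group~$\Gamma$, uses $\Omega_3^{SO}(B\Gamma)\cong\Omega_3^{SO}(\mathrm{pt})=0$ to conclude that $(M,\phi)$ bounds over~$B\Gamma$, and then makes this null-bordism explicit with $2$-handle count bounded in terms of a fixed triangulation of $M$, uniformly in~$\phi$. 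Once that is in hand, your final step---bounding $|S(W',\tilde\phi)|$ by the number of $2$-cells via $L^2$-dimension estimates---is correct and is exactly what is used. Your analytic sketch is in the right spirit, though the phrase ``positivity of the spectral gap of the lifted operator'' is not accurate (there is typically no gap); the Cheeger--Gromov estimate comes from heat-kernel comparison and control of the large-time behaviour, not from a spectral gap.
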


While Cheeger and Gromov's proof only shows the existence of such bound $C_M$, Cha found an explicit $C_M$ when we are given a triangulation of $M$.
Especially when $M$ is a zero
framed surgery on a knot, then $C_M$ can be chosen as a constant multiple of the crossing number of the knot as follows:

\begin{theorem}\cite[Theorem 1.9]{cha2014a}\label{theorem:explicit-cheeger-gromov}
Let $K$ be a knot and $c(K)$ be the crossing number of $K$. Suppose $M$ is a 3-manifold obtained by zero
framed surgery on $K$. Then
\[|\rho(M,\phi)|<69713280\cdot c(L)\]
for any homomorphism $\phi\colon\pi_1(M)\to G$ into any group $G$.
\end{theorem}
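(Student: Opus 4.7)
The plan is to prove the theorem in two stages. The first stage is a general complexity bound for $\rho$-invariants: for any closed oriented triangulated $3$-manifold $M$ with $t$ $2$-simplices, there is a universal constant $C$ with $|\rho(M,\phi)| \le C \cdot t$ for every homomorphism $\phi \colon \pi_1(M) \to G$ into an arbitrary group. The second stage is a combinatorial estimate showing that if $M = M_K$ is the zero-framed surgery on a knot $K$ with crossing number $c$, then $M$ admits a triangulation with $O(c)$ $2$-simplices. Combining the two stages gives a linear bound in $c$ whose coefficient is the product of the two constants, and the figure $69713280$ is the accumulated product.

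For the first stage, the core construction is to build an efficient null-bordism $W$ for $M$ whose $2$-handle count is controlled linearly by $t$. Starting from the given triangulation of $M$, one takes $M \times [0,1]$ and attaches $2$- and $3$-handles that kill $\pi_1$ and produce a simply connected $W$; the number of such handles is bounded linearly by $t$. Whenever the representation $\phi$ extends over $\pi_1(W)$, one has $\rho(M,\phi) = S(W,\phi) = \sigma^{(2)}(W,\phi) - \sigma(W)$, and each of $|\sigma^{(2)}(W,\phi)|$ and $|\sigma(W)|$ is bounded by $b_2(W)$, hence by the number of $2$-handles. When $\phi$ fails to extend, one replaces $\phi$ by its composite $\phi \times \id$ into a suitably enlarged group; this enlargement allows the extension to be forced without altering the value of $\rho$, and is the technical heart of the argument.

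For the second stage, one begins with a diagram of $K$ with $c$ crossings. A standard polyhedral decomposition of the knot exterior $E_K$ built from the diagram (for instance a handle decomposition derived directly from the Wirtinger picture, or a checkerboard-type cell decomposition) yields a CW structure with $O(c)$ cells in each dimension. Subdividing produces a triangulation of $E_K$ with $O(c)$ tetrahedra, hence $O(c)$ $2$-simplices. Performing $0$-framed Dehn filling amounts to gluing in a single solid torus, which can be triangulated with a bounded number of simplices, so that $M = M_K$ inherits a triangulation with $O(c)$ $2$-simplices. Applying the general bound from the first stage then yields the claimed estimate.

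The main obstacle is the first stage. The enlargement trick that forces $\phi$ to extend over $\pi_1(W)$, while leaving $\rho$ unchanged, is the most delicate conceptual point and must be carried out carefully to avoid losing control of the constants. The explicit constant $C$ requires bookkeeping of how many handles must be attached to trivialize $\pi_1$, how efficiently the triangulation converts into a handle decomposition, and the precise bounds on $L^2$-Betti numbers of the resulting $4$-complex. The second stage is essentially combinatorial, but obtaining a sharp linear coefficient for the triangulation bound is necessary to arrive at the exact constant $69713280$ rather than merely an asymptotic estimate.
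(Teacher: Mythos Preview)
The paper does not prove this statement at all: Theorem~\ref{theorem:explicit-cheeger-gromov} is quoted verbatim from \cite[Theorem~1.9]{cha2014a} and used as a black box (to choose the constant $C$ in the construction of the infection knots $J^0_i$). There is therefore no proof in the paper to compare your proposal against.

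That said, your two-stage outline does match the architecture of Cha's argument in \cite{cha2014a}: a universal linear bound $|\rho(M,\phi)|\le C\cdot(\text{number of $2$-simplices})$ for arbitrary triangulated closed $3$-manifolds, followed by a diagram-to-triangulation estimate for $0$-surgery giving $O(c(K))$ simplices. Your description of the first stage, however, is not quite right. The difficulty is precisely that one cannot in general arrange $\phi$ to extend over a simply connected $W$, and the fix is not an ``enlargement $\phi\times\id$'' into a bigger group; Cha's actual mechanism is algebraic, working with controlled chain complexes and a notion of controlled chain null-homotopy (built from a simplicial-set model of $BG$), which produces the $L^2$-signature bound without ever needing $\phi$ to extend over a geometric simply connected bordism. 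Your sketch, as written, does not supply a working substitute for that step, so it is a genuine gap rather than merely a different route.
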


The additive property of $L^2$-signature defects under the union of 4-manifolds will be used several times:

\begin{theorem}[Novikov additivity]\label{thm:Novikov-additivity}
Let $V=V^1\sqcup V^2$ be a boundary connected sum of two 4-dimensional manifolds $V^1$ and $V^2$.
Then for any homomorphism $\phi\colon\pi_1( V)\to G$,
\[S(V,\phi)=S(V^1,\phi)+S(V^2,\phi)\]
where the induced homomorphisms on $V^1$ and $V^2$ from $\phi$ are also denoted by $\phi$.
\end{theorem}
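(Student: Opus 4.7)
The plan is to reduce this to the corresponding additivity statements for the ordinary signature $\sign$ and the $L^2$-signature $\lsign$ separately, and then subtract, using the defining relation $S(V,\phi) = \lsign(V,\phi) - \sign(V)$. I would view $V = V^1 \sqcup V^2$ as the gluing $V^1 \cup_B V^2$, where $B \cong D^3$ is a 3-ball identified with a 3-ball in $\partial V^1$ and a 3-ball in $\partial V^2$, and then set up Mayer--Vietoris for this decomposition, first with $\Z$-coefficients and then with $\lt G$-coefficients on the $G$-cover $\wt V \to V$ induced by $\phi$.

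For the classical part, contractibility of $B \cong D^3$ yields $H_2(V;\Z) \cong H_2(V^1;\Z) \oplus H_2(V^2;\Z)$ via Mayer--Vietoris, and since any 2-cycle in $V^i$ admits a representative disjoint from $B$, the ordinary intersection form splits as an orthogonal direct sum. This gives $\sign(V) = \sign(V^1) + \sign(V^2)$.

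For the $L^2$ part I would run the parallel argument on the cover $\wt V$. The preimage of $B$ in $\wt V$ is a disjoint union of 3-balls indexed by $G$, so its positive-degree $\lt$-homology vanishes. The equivariant Mayer--Vietoris sequence then yields $H_2(V;\lt G) \cong H_2(V^1;\lt G) \oplus H_2(V^2;\lt G)$ as finitely generated Hilbert $G$-modules, and the same geometric representative argument shows that the equivariant intersection form splits as an orthogonal sum. Additivity of the von Neumann trace then gives $\lsign(V,\phi) = \lsign(V^1,\phi) + \lsign(V^2,\phi)$, and subtracting the two additivity identities yields the theorem.

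The step I expect to be most delicate is checking that the decomposition $\wt V = \wt{V^1} \cup \wt{V^2}$ of the $G$-cover is genuinely compatible with the $G$-actions and the restrictions of $\phi$. Here one uses that for a boundary connected sum $\pi_1(V)$ is (essentially) the free product $\pi_1(V^1) * \pi_1(V^2)$, so $\phi$ restricts to well-defined homomorphisms on each $\pi_1(V^i)$, and the preimage $\wt{V^i} \subset \wt V$ is precisely the $G$-cover of $V^i$ induced by that restriction. Once this is pinned down, the remaining work is routine bookkeeping about basepoints, orientations, and Hilbert $G$-module conventions in the definition of $\lsign$.
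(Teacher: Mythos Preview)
The paper does not actually prove this statement: Theorem~\ref{thm:Novikov-additivity} appears in Section~\ref{section:preliminaries} as a known background fact about $L^2$-signature defects, stated without proof and invoked later as a tool. So there is no proof in the paper to compare against.

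Your outline is the standard argument and is correct. One small comment: the boundary-connected-sum case you treat is in fact the easy case of Novikov additivity, since gluing along a contractible $3$-ball makes both Mayer--Vietoris sequences degenerate immediately; the classical Novikov additivity (gluing along a closed boundary component) requires a bit more care, but that generality is not needed here. Also, for the $L^2$ part you do not strictly need to pass to the cover and assemble Hilbert $G$-modules by hand: the $\mathcal{N}(G)$-coefficient Mayer--Vietoris sequence for $V = V^1 \cup_B V^2$ already gives the splitting of $H_2(V;\mathcal{N}(G))$ and of the equivariant intersection form, with the restricted homomorphisms arising automatically from the inclusions $V^i \hookrightarrow V$. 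This sidesteps the bookkeeping you flag in your last paragraph.
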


Recall that the Levine-Tristram signature function $\sigma_K$ of a knot $K$ is a function on $S^1\subset \mathbb{C}$ defined as
\[\sigma_K(\omega)=\operatorname{sign}((1-\omega)A+(1-\ol{\omega})A^T),\]
 where A is a Seifert matrix of $K$.
It is known that abelian $\rho$-invariants of $M(K)$ can be considered as \emph{the average} of the Levine-Tristram signature function of $K$:

\begin{lemma}\cite[Proposition~5.1]{cochran2004structure}\cite[Corollary 4.3]{friedl2005l2}\label{lem:rho-invariant-and-Levine-Tristram-invariant}
Let $K$ be a knot with the meridian $\mu$ and $\phi\colon\pi_1(M(K))\to G$ be a homomorphism whose image is contained in an abelian subgroup of $G$. Then
\[
\rho(M(K),\phi) = \begin{cases} \int_{S^1} \sigma_{K}(\omega)\,d\omega &
    \text{if $\phi([\mu])\in G$ has
      infinite order,} \\[1ex]
    \sum_{r=0}^{d-1} \sigma_{K}(e^{2\pi r\sqrt{-1}/d}) &\text{if
      $\phi([\mu])\in G$ has finite order $d$.}
  \end{cases}
  \]
\end{lemma}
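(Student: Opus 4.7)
The plan is to reduce to the cyclic subgroup generated by $\phi([\mu])$ and then handle the two cases on its order separately. Since the image of $\phi$ lies in an abelian subgroup, both $\eta(M(K),g)$ and $\eta^{(2)}(M(K),\phi,g)$ are unchanged by replacing $G$ with that abelian subgroup, so $\rho(M(K),\phi)$ depends only on the composition with projection to the abelian image. Because $H_1(M(K);\Z) \cong \Z\langle[\mu]\rangle$, this composition factors through the cyclic group $\langle\phi([\mu])\rangle$; hence one may assume $G$ itself is cyclic, generated by $\phi([\mu])$.

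For the finite order case, set $d = \ord(\phi([\mu]))$, so $\phi$ factors through $\Z_d$ and the von Neumann $\rho$-invariant coincides with the ordinary $\Z_d$-signature defect. I would choose a 4-manifold $W$ bounding $M(K)$ over which $\phi$ extends (for instance, the 4-manifold built from a pushed-in Seifert surface together with the $\Z_d$-cover construction), form the $d$-fold cyclic cover $\widetilde W \to W$, and decompose $H_2(\widetilde W;\mathbb{C})$ into eigenspaces of the deck transformation indexed by $d$-th roots of unity $\omega$. The signature on the $\omega$-eigenspace then matches the Levine-Tristram signature $\sigma_K(\omega) = \sign((1-\omega)A + (1-\bar\omega)A^T)$ via an explicit identification with the Hermitised Seifert form. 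Summing over all $d$-th roots of unity and combining with the ordinary signature term in the definition of $\rho$ yields $\sum_{r=0}^{d-1}\sigma_K(e^{2\pi r\sqrt{-1}/d})$.

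For the infinite order case, $\phi$ factors through $\Z$, so I would pass to the $\Z$-cover of a suitable $W$ bounding $M(K)$ and compute $S(W,\phi)$ via the twisted intersection form on $H_2(W;\mathcal{N}(\Z))$. Under the Fourier isomorphism $\mathcal{N}(\Z) \cong L^\infty(S^1)$, this form diagonalises as a measurable family of pointwise Hermitian forms on $S^1$ whose pointwise signature function is precisely $\sigma_K(\omega)$. Since the canonical von Neumann trace corresponds to integration against normalised Haar measure on $S^1$, the $L^2$-signature of the form equals $\int_{S^1}\sigma_K(\omega)\,d\omega$; subtracting $\sigma(W)$ gives the claimed formula for $\rho(M(K),\phi)$.

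The main technical obstacle in both cases is the explicit identification of the $\omega$-twisted intersection form on $H_2(W)$ with the Hermitised Seifert matrix $(1-\omega)A + (1-\bar\omega)A^T$, which rests on a careful handle-theoretic description of the cover built from a pushed-in Seifert surface. In the finite order case one must additionally address those $d$-th roots of unity at which the Alexander polynomial of $K$ vanishes (where $\sigma_K$ has jump discontinuities); this is handled by the conventional two-sided averaging of $\sigma_K$ at such points. Since the identity appears as \cite[Proposition~5.1]{cochran2004structure} and \cite[Corollary~4.3]{friedl2005l2}, the remaining verifications are routine and we freely cite those references.
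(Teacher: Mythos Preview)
The paper does not supply its own proof of this lemma: it is stated with citations to \cite[Proposition~5.1]{cochran2004structure} and \cite[Corollary~4.3]{friedl2005l2} and used as a black box. Your sketch follows exactly the standard argument underlying those references---reduction to a cyclic image via $H_1(M(K))\cong\Z$, identification of the twisted intersection form with the Hermitised Seifert form, and the Fourier/eigenspace decomposition of $\mathcal{N}(\Z)$ or $\mathbb{C}[\Z_d]$---so there is nothing to compare; you have simply expanded what the paper leaves to citation. Since your final paragraph already defers the remaining technical verifications to those same references, your write-up is effectively equivalent to the paper's treatment.
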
\label{c}

\section{Construction of knots bounding gropes}\label{section:construction}

In this section, we construct infinitely many negatively amphichiral knots bounding gropes of height $n+2$ in $D^4$, with vanishing PTFA $L^2$-signature obstructions. 
Recall that a knot $K$ is called negatively amphichiral if $K$ is isotopic to $-K$, which implies $K$ has order $1$ or $2$ in the knot concordance group.
Hence knots constructed in this section can serve as prototypes for various order $2$ knots in our main theorems. 

%Basically we use the \emph{iterated infection construction}, which has been used in the construction of many $(n)$-solvable knots by Cochran, Harvey, and Leidy. \cite[Definition~2.6]{cochran2009knot}.

\subsection{Infection of a knot}

The basic ingredient of our construction is the \emph{infection} of knots, which also has been called \emph{companion}, \emph{genetic modification} or \emph{satellite construction}.
We recall its definition briefly for completeness and arrangement of notations.
More details about infection can be found in \cite[Section~4.D]{rolfsen2003knots} and \cite[Section~3]{cochran2004structure}.

Let $K\subset S^3$ be a knot and $X(K)$ the knot exterior of $K$.
Let $\eta_i\colon S^1\hookrightarrow X(K)\subset$, $r=1,\dots,r$, be curves whose union forms an unlink in $S^3$.
Let $J_i$, $i=1,2,\dots, r$, be knots.
For each $i$, remove a tubular neighborhood of $\eta_i$ and instead glue the knot exterior $X(J_i)$ along their boundaries, in such a way that the meridian of $\eta_i$ is identified with the reverse of the longitude of $J_i$ and the longitude of $\eta_i$ is identified with the meridian of $J_i$.
The resulting 3-manifold is homeomorphic to $S^3$, but the the knot type of $K$ may have been changed.
It is said that $K$ is \emph{infected} by $J_i$'s along $\eta_i$'s and the resulting knot is denoted by $K_{\eta_1,\dots,\eta_r}(J_1,\cdots,J_r)$.
We call $K$ the \emph{seed knot}, $\eta_i$'s the \emph{axes}, and $J_i$'s the \emph{infection knots}.

Infection has been utilized to construct new slice knots, knots bounding gropes of arbitrary height in $D^4$, or $(n)$-solvable knots by the virtue of the following results:

\begin{proposition}\label{proposition:infection-by-slice-is-slice}
Let $K$ and $J_1,\dots,J_r$ be slice knots.
Then for any axis $\eta_1\dots,\eta_r\subset X(K)$, $K_{\eta_1,\dots,\eta_r}(J_1,\dots,J_r)$ is also slice.
\end{proposition}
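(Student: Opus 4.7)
The plan is to reduce the proposition to two basic properties of the infection operation: (i) infecting by the unknot is trivial, i.e., $K_{\eta_1,\dots,\eta_r}(U,\dots,U)=K$, where $U$ denotes the unknot; and (ii) if $J_i$ is concordant to $J_i'$ for each $i$, then $K_{\eta_1,\dots,\eta_r}(J_1,\dots,J_r)$ is concordant to $K_{\eta_1,\dots,\eta_r}(J_1',\dots,J_r')$. Granted (i) and (ii), since each slice knot $J_i$ is concordant to $U$, iterating (ii) shows that the infected knot is concordant to $K_{\eta_1,\dots,\eta_r}(U,\dots,U) = K$, which is slice by hypothesis; hence it is itself slice.

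Property (i) is immediate from the definition: removing a tubular neighborhood of $\eta_i$ and gluing back $X(U)\cong S^1\times D^2$ along the prescribed identification of meridian and longitude simply reassembles $S^3$ and leaves $K\subset S^3$ untouched, since $\eta_i$ is disjoint from $K$ and $X(U)$ is again a standardly embedded solid torus.

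For (ii), the idea is to promote the 3-dimensional satellite construction to a 4-dimensional one. Let $A_i\subset S^3\times[0,1]$ be a locally flat concordance annulus from $J_i$ to $J_i'$. In the product $S^3\times[0,1]$, consider the product concordance $K\times[0,1]$ together with the disjoint cylinders $\eta_i\times[0,1]$, which form an unknotted family of cylinders disjoint from $K\times[0,1]$. Remove open tubular neighborhoods of the cylinders $\eta_i\times[0,1]$ and glue in the concordance exteriors $(S^3\times[0,1])\setminus\nu(A_i)$ along their boundary tori $T^2\times[0,1]$, matching meridians of $\eta_i$ to longitudes of $A_i$ and longitudes of $\eta_i$ to meridians of $A_i$, exactly as in the 3-dimensional infection. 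One checks slice by slice that the resulting ambient 4-manifold is homeomorphic to $S^3\times[0,1]$, since each time slice reproduces the 3-dimensional infection of $S^3$ which again yields $S^3$. The image of $K\times[0,1]$ is then a locally flat annulus from $K_{\eta_1,\dots,\eta_r}(J_1,\dots,J_r)$ to $K_{\eta_1,\dots,\eta_r}(J_1',\dots,J_r')$.

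The main delicate point lies in (ii): one has to know that the tubular neighborhood of a topologically locally flat concordance annulus is trivial as a $D^2$-bundle, and that the framing can be chosen compatibly with the $0$-framings of $J_i$ and $J_i'$ at the two ends. This is standard for locally flat surfaces in topological $4$-manifolds (since $A_i$ is homotopy equivalent to $S^1$ and the framings form a torsor over $H^1(S^1;\mathbb{Z})$, which is accommodated by prescribing the framings at the boundary). Once this is settled, the cut-and-paste is well defined, the identification of the resulting $4$-manifold with $S^3\times[0,1]$ is routine, and the concordance-invariance of the infection operation follows.
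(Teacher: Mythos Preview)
Your argument is correct and is the standard route to this well-known fact; the paper itself states the proposition without proof. Reducing to concordance invariance of infection together with the triviality of infection by the unknot is exactly the right idea.

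One small point: the justification ``one checks slice by slice that the resulting ambient $4$-manifold is homeomorphic to $S^3\times[0,1]$'' is only heuristic in the topological (locally flat) category, since a locally flat concordance annulus need not be level-preserving. A clean way to finish is to observe that the glued $4$-manifold $W$ is simply connected: the concordance exterior $(S^3\times[0,1])\setminus\nu(A_i)$ has fundamental group normally generated by a meridian (by Seifert--van Kampen applied to $S^3\times[0,1]=\nu(A_i)\cup E_i$), and that meridian is identified with the longitude of $\eta_i$, which bounds a disk in the unlink complement; conversely the meridians of the $\eta_i$ are identified with longitudes of the $A_i$, which die once the meridians do. A Mayer--Vietoris computation then gives $H_2(W)=0$, so $W$ is an $h$-cobordism from $S^3$ to $S^3$ and Freedman's theorem yields $W\cong S^3\times[0,1]$. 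With this in place your proof is complete.
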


\begin{proposition}\cite[Theorem 3.8]{cochran2007knot} \label{theorem:satellite-bound-grope}
Let $K$ be a slice knot.
Suppose each $\eta_i$ bounds a grope of height $n$ in the exterior of $K$ and each $J_i$ is a connected sum of the Horn's knots (see Figure~\ref{figure:P_m}).
Then $K_{\eta_1,\dots,\eta_r}(J_1,\dots,J_r)$ bounds a grope of height $n+2$ in~$D^4$.
\end{proposition}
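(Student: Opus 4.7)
The plan is to construct a height-$(n+2)$ grope in $D^4$ bounded by $K' := K_{\eta_1,\dots,\eta_r}(J_1,\dots,J_r)$ by assembling three building blocks: a slice disk for $K$, the height-$n$ gropes $G_{\eta_i}$ bounded by the axes $\eta_i$ in $X(K)$, and the height-$2$ gropes $H_i$ bounded by the infection knots $J_i$ in $D^4$. Since $K$ is slice, first fix a locally flat slice disk $D \subset D^4$ with $\partial D = K$, and set $E := D^4 \setminus \nu(D)$. Each $\eta_i$ lies in $X(K) \subset \partial E$ and bounds $G_{\eta_i}$ of height $n$; push these simultaneously and slightly into the interior of $E$ so that they become pairwise disjoint properly embedded gropes with boundary on $\partial E$, which is possible because the $\eta_i$ form an unlink. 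Because each Horn knot lies in $\G_2$, each connected sum $J_i$ bounds $H_i$ of height $2$ in a separate copy of the $4$-ball.

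The central step is a four-dimensional infection. For each $i$, I would remove a four-dimensional tubular neighborhood of $\eta_i$ inside $E$ and glue in the complement $D^4_i \setminus \nu(\Sigma_{J_i})$, where $\Sigma_{J_i}$ is the bottom-stage Seifert surface of $H_i$ pushed into the separate copy $D^4_i$ of the $4$-ball. The gluing uses the standard meridian--longitude interchange defining infection: on the boundary $\partial D^4 = S^3$ it transforms $K$ into $K'$, and a standard Mayer--Vietoris calculation confirms that the reassembled $4$-manifold is still homeomorphic to $D^4$. Inside this reassembled $D^4$, the desired grope is built as follows. Its bottom-stage surface $F_0$ is obtained from $D$ by tubing in each $\Sigma_{J_i}$ along a meridional disk, so $F_0$ has genus $\sum_i g(\Sigma_{J_i})$ and boundary $K'$. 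A symplectic basis for $F_0$ is given by the union of the symplectic basis curves of the $\Sigma_{J_i}$'s. The meridian--longitude swap identifies each such curve, up to isotopy across the plumbing, with a curve capped by a stage-$1$ surface of $H_i$ whose own symplectic basis curves in turn are identified with $\eta_i$-like curves, capped by the height-$n$ gropes $G_{\eta_i}$. Stacking one level from $F_0$, one level from the stage-$1$ surface of $H_i$, and $n$ further levels from $G_{\eta_i}$ yields $1+1+n = n+2$ stages, which is the required height.

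The main obstacle will be the meridian--longitude bookkeeping across each plumbing site: one must verify that, after the infection gluing, the symplectic basis curves of $\Sigma_{J_i}$ land precisely in the position where the first stage of $G_{\eta_i}$ attaches, with compatible framings, and that the inductive grafting through all $n$ levels of $G_{\eta_i}$ produces a symmetric grope in the sense of \cite{cochran2003knot} rather than merely an uncapped tower. This framing-and-iteration analysis is the substantive content of \cite[Theorem 3.8]{cochran2007knot}; the plan above is essentially to follow that strategy.
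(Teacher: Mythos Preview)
The paper does not give its own proof of this proposition; it simply records the statement and cites \cite[Theorem~3.8]{cochran2007knot}. So there is nothing in the paper to compare against beyond that citation, and since you conclude by deferring to exactly that reference, your proposal and the paper ultimately agree: invoke Cochran--Teichner.

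That said, the sketch you give before deferring has real problems, not just bookkeeping. First, your ``four-dimensional infection'' is not well-defined as written: removing a tubular neighborhood $S^1\times D^3$ of $\eta_i$ from $E$ leaves an $S^1\times S^2$ boundary component, whereas $\partial\bigl(D^4_i\setminus\nu(\Sigma_{J_i})\bigr)$ is $(S^3\setminus\nu(J_i))\cup(\Sigma_{J_i}\times S^1)$, which is not $S^1\times S^2$ unless $\Sigma_{J_i}$ is a disk. So the two pieces cannot be glued along their full boundaries in the way you describe. Second, even granting some corrected gluing, a Mayer--Vietoris computation gives only homology, and in dimension~$4$ that is far from enough to conclude the result is $D^4$.

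Third, and most substantively, the step where you say the symplectic basis curves of the second-stage surfaces of $H_i$ ``are identified with $\eta_i$-like curves, capped by the height-$n$ gropes $G_{\eta_i}$'' is exactly the heart of the matter, and it is not automatic. Those tip curves live in the region where you planted $H_i$; they are homotopic in that solid-torus--like region to multiples of $\eta_i$, but homotopy is not isotopy, and an arbitrary curve there need not bound a parallel copy of $G_{\eta_i}$ without introducing extra genus that ruins the height count. Making this work is precisely what the Cochran--Teichner argument does carefully (and is presumably why the present paper restricts to Horn's knots, whose height-$2$ gropes come from clasper surgery and have controlled tips), so it is misleading to present it as a routine identification. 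Your instinct about the stacking order (two stages from $H_i$ on the bottom, then $n$ stages from $G_{\eta_i}$) is correct, but the bridge between them is the whole theorem.
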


\begin{figure}[htb]
    \labellist
    \hair 0mm
    \pinlabel{$U$} at 215 130
    \pinlabel{$\dots$} at 88 9
    \pinlabel{$T$} at 190 29
    \pinlabel{$m$} at 91 -5
    \endlabellist
    \centering
    \includegraphics{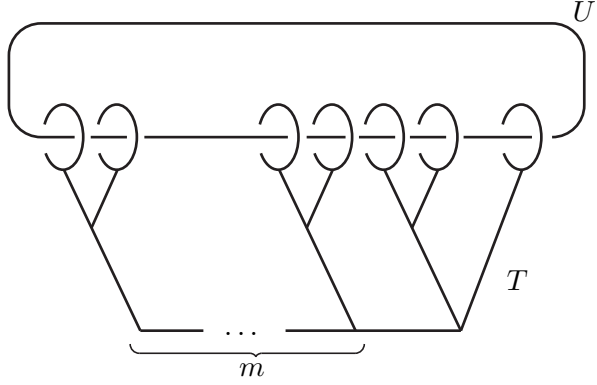}
    \caption{A clasper construction of $P_m$}
    \label{figure:P_m}
\end{figure}

Figure~\ref{figure:P_m} describes the knot $P_m$ of P.~Horn in \cite{horn2010the} by using the language of clasper surgery of K.~Habiro \cite{habiro2000claspers}.
The knot $P_m$ is obtained by performing clasper surgery on the unknot $U$ along the tree $T$.
(For a surgery description of $P_m$, see Figure~3 of \cite{horn2010the}.)

\begin{proposition}\cite[Proposition~3.1]{cochran2004structure}\label{prop:satellite-solvable}
Let $K$ be a slice knot.
Suppose the homotopy class $[\eta_i]$ of $\eta_i$ lies in $\pi_1(X(K))^{(n)}$, and $J_i$'s are $(0)$-solvable knots for all $i$.  Then $K_{\eta_1,\dots,\eta_r}(J_1,\dots,J_r)$ is $(n)$-solvable.
\end{proposition}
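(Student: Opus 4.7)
To prove this, I plan to construct a $4$-manifold $W$ with $\d W = M(K')$, where $K' := K_{\eta_1,\dots,\eta_r}(J_1,\dots,J_r)$, and exhibit $W$ as an $(n)$-solution for $K'$. The idea is to glue a slice-disk exterior for $K$ to $(0)$-solutions for the $J_i$ along tubular neighborhoods of the $\eta_i$ and the meridians $\mu_{J_i}$. Explicitly, since $K$ is slice, fix a slice disk $D\subset D^4$ and set $V := D^4\sm\nu(D)$; then $\d V=M(K)$, $H_*(V)\cong H_*(S^1)$ (so in particular $H_2(V)=0$), and the inclusion-induced map $\pi_1(X(K))\to\pi_1(V)$ preserves derived subgroups, so each $\eta_i$ lies in $\pi_1(V)^{(n)}$. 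For each $i$, choose a $(0)$-solution $W_i$ for $J_i$, push $\eta_i\subset\d V$ into $\inte V$ and $\mu_{J_i}\subset\d W_i$ into $\inte W_i$, and glue $\nu(\eta_i)\cong S^1\times D^3$ to $\nu(\mu_{J_i})\cong S^1\times D^3$ by a diffeomorphism whose boundary restriction realizes the usual infection swap between meridian and longitude. Call the resulting $4$-manifold $W$; a routine check shows $\d W = M(K')$.

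The next step is to verify the homological conditions for $(n)$-solvability. A Mayer--Vietoris computation, using $H_1(V)=H_1(W_i)=\Z$ with generators identified via the glue and $H_2(V)=0$, yields $H_1(W)=\Z$ with $H_1(\d W)\to H_1(W)$ an isomorphism, and $H_2(W)\cong\bigoplus_i H_2(W_i)$ with intersection form the orthogonal direct sum of those on the $H_2(W_i)$. Each Lagrangian with duals $\{L_{i,j}, D_{i,j}\}$ for $H_2(W_i)$ therefore lifts to a Lagrangian with duals for $H_2(W)$, and only the $\pi_1$-condition on these surfaces remains.

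The crucial claim is that the image of $\pi_1(W_i)\to\pi_1(W)$ lies in $\pi_1(W)^{(n)}$, for then the surfaces $L_{i,j},D_{i,j}\subset W_i\subset W$ automatically satisfy the $(n)$-Lagrangian and $(n)$-dual $\pi_1$-condition. Under the gluing, $\mu_{J_i}$ is identified with $\eta_i\in\pi_1(V)^{(n)}\subseteq\pi_1(W)^{(n)}$; since $\pi_1(W)^{(n)}$ is normal in $\pi_1(W)$, the normal closure of $\eta_i$ in $\pi_1(W)$ is contained in $\pi_1(W)^{(n)}$. Hence if $\pi_1(W_i)$ is normally generated by $\mu_{J_i}$, every element in the image of $\pi_1(W_i)$ is a product of conjugates of $\eta_i$, which lies in $\pi_1(W)^{(n)}$.

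The main obstacle is arranging that $\pi_1(W_i)$ is normally generated by $\mu_{J_i}$: not every $(0)$-solution satisfies this a priori, and a weaker bound (image lying, say, only in $\pi_1(W)^{(1)}$) would not suffice when $n\ge 2$. The standard remedy is to use that $(0)$-solvability is equivalent to $\Arf(J_i)=0$ and to build $W_i$ explicitly from a suitable Seifert surface so that $\pi_1(W_i)$ is normally generated by the meridian, or alternatively to modify any given $(0)$-solution by $1$- and $2$-handle attachments that kill extra $\pi_1$ generators without disturbing the Lagrangian and its duals. Once such $W_i$ is chosen for each $i$, the construction above produces an $(n)$-solution for $K'$.
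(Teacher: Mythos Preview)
Your overall strategy matches the paper's sketch: glue a slice-disk exterior $V$ for $K$ to $(0)$-solutions $W_i$ for the $J_i$, then read off the $(n)$-solution conditions. Two points deserve correction or comparison.

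First, the gluing as you describe it does not produce the right boundary. Pushing $\eta_i$ and $\mu_{J_i}$ into the interiors and identifying (or removing and regluing) $S^1\times D^3$ tubes yields a $4$-manifold whose boundary is $M(K)\sqcup\bigsqcup_i M(J_i)$, not $M(K')$; the original boundary components are untouched by an interior identification. The paper instead glues along solid tori in the boundaries: the tubular neighborhood $\nu(\eta_i)\cong S^1\times D^2\subset M(K)=\partial V$ is identified with the surgery solid torus in $M(J_i)=\partial W_i$. With that boundary gluing one has $\partial W=(M(K)\smallsetminus\bigsqcup_i\nu(\eta_i))\cup\bigsqcup_i X(J_i)=M(K')$ by the very definition of infection, and your Mayer--Vietoris and $\pi_1$ arguments then go through without change.

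Second, for the $\pi_1$-condition the paper avoids your handle-modification discussion entirely by citing the fact (Proposition~4.4 of \cite{cha2010amenable}) that every $(0)$-solvable knot admits a $(0)$-solution $W_i$ with $\pi_1(W_i)\cong\Z$, generated by the meridian. This is stronger than normal generation by $\mu_{J_i}$, it is already available in the literature, and the paper relies on this specific choice of $W_i$ again later (in the vanishing PTFA $L^2$-signature argument of Subsection~\ref{subsection:vanishing-PTFA-condition}), so it is worth invoking directly rather than arguing around it.
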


Here we present the proof of the last proposition briefly for later use.

\begin{proof}[Sketch of the proof]
For any $(0)$-solvable knot $J_i$, $i=1,\dots,r$, there is a $(0)$-solution $W_i$ with $\pi_1(W_i)$ isomorphic to $\Z$. (For its proof, see, e.g., Proposition 4.4 of \cite{cha2010amenable}.)
Let $W_0$ be a slice disk exterior of $K$.
Glue $W_i$ to $W_0$ along the solid torus in $\d W_i=M(J_i)$ attached during the zero
framed surgery and the tubular neighborhood of $\eta_i$ in $\d W_0=M(K)$.
Then the resulting 4-manifold is an $(n)$-solution for $K_{\eta_1,\dots,\eta_r}(J_1,\dots,J_r)$.
\end{proof}

\subsection{An iterated infection}\label{subsection:iterated-infection}

Let $\mathbb{E}_m$ be the negatively amphichiral knot in Figure \ref{figure:E_m}.
The box with label $\pm m$ denotes $m$ full positive (negative, resp.) twists between bands but individual bands left untwisted.
Let $K$ be the connected sum of two copies of $\mathbb{E}_m$ with two axes $\a, \b\subset X(K)$ as in the Figure~\ref{figure:K}.
Since $\mathbb{E}_m$ is negatively amphichiral, $K$ is a ribbon knot.
Note that $\a\sqcup\b$ forms an unlink in $S^3$ and the linking numbers $\lk(\a,K)$ and $\lk(\b,K)$ are zero.
For each $k=1,\dots,n-1$, let $K^k$ be a ribbon knot and $\eta_k\subset X(K^k)$ be an axis with $\lk(K^k,\eta_k)=0$.
Let $J^0$ be any knot bounding a grope of height 2 in~$D^4$.

Define inductively
\[J^{k}= K^{k}_{\eta_k}(J^{k-1})\]
for $1\leq k \leq n-1$, and define
\[J^n= K_{\a,\b}(J^{n-1},\ol{J^{n-1}}),\]
where $\ol{J^{n-1}}$ is the mirror image of $J^{n-1}$.
It is shown that $J^n$ is negatively amphichiral in Lemma~2.1 of \cite{cochran20112-torsion}.
We show that $J^n$ bounds a grope of height $n+2$ in $D^4$.

\begin{figure}[htb]
    \labellist
    \hair 0mm
    \pinlabel{$m$} at 13 47
    \pinlabel{$-m$} at 195 47
    \endlabellist
    \centering
    \includegraphics{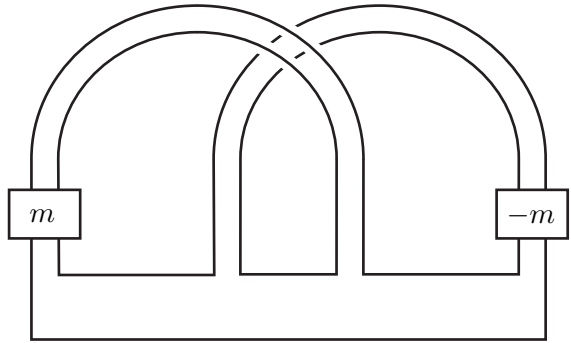}
    \caption{$\mathbb{E}_m$}
    \label{figure:E_m}
\end{figure}

\begin{figure}[htb]
    \labellist
    \hair 0mm
    \pinlabel{$m$} at 13 53
    \pinlabel{$-m$} at 195 53
    \pinlabel{$m$} at 227 53
    \pinlabel{$-m$} at 409 53
    \pinlabel{$\a$} at 32 36
    \pinlabel{$\b$} at 176 36
    \endlabellist
    \centering
    \includegraphics{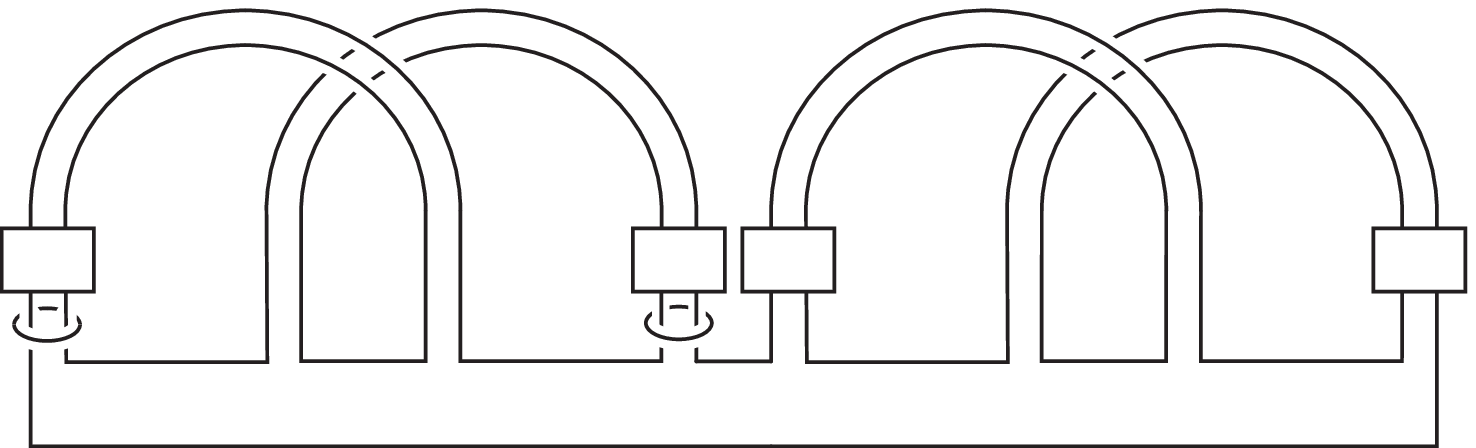}
    \caption{$K$}
    \label{figure:K}
\end{figure}

Let $U^0$ be an unknot, and define inductively
\[U^{k}= K^{k}_{\eta_k}(U^{k-1})\]
for $1\leq k \leq n-1$, and
\[U^n= K_{\a,\b}(U^{n-1},\ol{U^{n-1}}).\]

Suggested by Cha~\cite[Section~4.2]{cha2010amenable}, $X(U^n)$ can be decomposed as
\[X(U^0)\sqcup X(\ol{U^0})\sqcup X(K^1\sqcup \eta_1)\sqcup X(\ol{K^1\sqcup \eta_1})\sqcup\dots X(K^{n-1}\sqcup \eta_{n-1})\sqcup X(\ol{K^{n-1}\sqcup \eta_{n-1}})\sqcup X(K\sqcup\a\sqcup\b),\]
where $\ol{K^i\sqcup \eta_i}$ is the mirror image of $K^i\sqcup \eta_i$, considered as a 2-component link.
Note that $U^n$ is a slice knot by Proposition~\ref{proposition:infection-by-slice-is-slice}.
It can be shown that $\eta_1$ and $\ol{\eta_1}$, considered as curves in $X(U^n)$, represent elements in $\pi_1(X(U^n))^{(n)}$ by following the same argument as in the proof of Lemma~4.9 of \cite{cha2010amenable}.

On the other hand, it is easy to see that $J^n$ is isotopic to $U^n_{\eta_1,\ol{\eta_1}}(J^0,\ol{J^0})$.
By Proposition~\ref{theorem:satellite-bound-grope}, $J^n$ bounds a grope of height $n+2$ in $D^4$.

\subsection{Infection knots with a height 2 grope and vanishing signature integral}\label{subsection:K0}

In this subsection we construct a special set of infection knots $J^0_i$:
%Let $d_1,d_2,\dots$ be an increasing sequence of odd primes and denote $\omega_i=e^{2\pi\sqrt{-1}/d_i}\in\mathbb{C}$. 
%In \cite{cha2010amenable}, Cha shows the following proposition:
\begin{proposition}\label{prop:knot-of-cha}
For an arbitrary constant $C>0$, there is a family of knots $J^0_1, J^0_2, \dots$ and an increasing sequence of odd primes $d_1,d_2,\dots$ satisfying the following properties:
\begin{enumerate}
\item $J^0_i$ bounds a grope of height 2 in~$D^4$,
\item $\sigma_{J^0_i}(\omega_i)=\sigma_{J^0_i}(\omega_i^{-1})>C$ and $\sigma_{J^0_i}(\omega_i^r)=0$ for $r\not\equiv\pm1$ (mod $d_i$),
\item $\sigma_{J^0_j}(\omega_i^r)=0$ for any $r$, whenever $i>j$, and
\item \label{subprop}$\Arf(J^0_i)=0$ and $\int_{S^1}\sigma_{J^0_i}(\omega)\,d\omega=0$,
\end{enumerate}
where $\omega_i=e^{2\pi\sqrt{-1}/d_i}\in\mathbb{C}$ is a primitive $d_i$-th root of unity. 
\end{proposition}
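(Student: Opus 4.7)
The plan is to construct each $J^0_i$ as a connected sum of knots that individually bound gropes of height $2$ in $D^4$. This is permitted because $\G_2$ is a subgroup of $\C$, so the class of height-$2$-grope-bounding knots is closed under connected sum, while the Levine--Tristram signature function, the Arf invariant, and the signature integral are each additive under connected sum. In this framework, conditions~(1)--(4) become a bookkeeping problem that I solve by induction on $i$.

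The building blocks are of two kinds. First, Horn's knots $P_m$ of Figure~\ref{figure:P_m} and their mirrors bound gropes of height~$2$ in $D^4$ by Proposition~\ref{theorem:satellite-bound-grope} (applied to an unknot seed), and their Arf invariants and signature integrals can be tuned through the parameter $m$. Second, for each odd prime $d$, I need a knot $E_d$ bounding a height-$2$ grope whose Levine--Tristram signature is nonzero at the pair $\{\omega,\omega^{-1}\}$ with $\omega = e^{2\pi\sqrt{-1}/d}$ and vanishes at every other $d$-th root of unity. Such $E_d$ is produced as in~\cite[Section~4]{cha2010amenable}: one infects a height-$2$-grope-bounding seed, along an axis lying in the first commutator subgroup of a slice-disk exterior of the seed, by a classical knot whose Alexander polynomial has its roots confined to the primitive $d$-th roots of unity. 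Proposition~\ref{theorem:satellite-bound-grope} then guarantees that $E_d$ bounds a height-$2$ grope, while the Levine--Tristram values at $d$-th roots of unity are inherited from the infection knot.

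With $E_d$ in hand, I proceed inductively. Having fixed $d_1<\cdots<d_{i-1}$ and $J^0_1,\dots,J^0_{i-1}$, each $\sigma_{J^0_j}$ is a step function with finitely many jump points, whose orders are bounded whenever they are roots of unity at all. I therefore select an odd prime $d_i > d_{i-1}$ so large that no $d_i$-th root of unity coincides with any such jump point, which secures property~(3) outright. I then set
\[
J^0_i \;=\; N_i\cdot E_{d_i} \,\#\, C_i,
\]
where $N_i$ is chosen large enough that $N_i\cdot\sigma_{E_{d_i}}(\omega_i) > C$, and $C_i$ is an auxiliary connected sum of Horn knots and their mirrors, selected so that (a) $\sigma_{C_i}$ vanishes at every $d_i$-th root of unity, and (b) the Arf invariant and signature integral of $C_i$ cancel those of the connected sum $N_i\cdot E_{d_i}$. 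Such a $C_i$ exists because varying $m$ in Horn's $P_m$ adjusts both $\Arf(P_m)$ and $\int\sigma_{P_m}$, mirrors flip the signs, and choosing $m$ so that the roots of $\Delta_{P_m}(t)$ avoid the finite set of $d_i$-th roots of unity handles~(a).

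Properties~(1) and~(4) then follow from closure of $\G_2$ under connected sum together with additivity of $\Arf$ and $\int\sigma$; the identity $\sigma_{J^0_i}(\omega_i) = \sigma_{J^0_i}(\omega_i^{-1})$ in~(2) is automatic from the Hermitian symmetry of the Levine--Tristram function. The remaining half of~(2) follows from signature additivity: at $\omega_i^{\pm 1}$ only the $N_i$ copies of $E_{d_i}$ contribute, giving a value exceeding $C$, while at any other $d_i$-th root every summand contributes $0$. The main technical obstacle is the existence of the sharply localized block $E_d$ in the preceding step: classical knots with signature concentrated at a single pair of $d$-th roots of unity are abundant, but they typically do not bound height-$2$ gropes, so one must verify in detail that Cha's satellite construction both upgrades them into $\G_2$ (via Proposition~\ref{theorem:satellite-bound-grope}) and preserves the localized signature at roots of unity, which amounts to checking that the infection axis lies deeply enough in the derived series of the seed's slice-disk complement for the grope-height conclusion to apply.
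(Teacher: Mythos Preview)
Your construction of the localized block $E_d$ does not work. If the axis $\eta$ lies in the commutator subgroup of the seed's exterior (equivalently, $\lk(\eta,K)=0$), then a Seifert surface for the seed $K$ can be chosen disjoint from $\eta$, and after infection by any knot $J$ it remains a Seifert surface for $K_\eta(J)$ with the \emph{same} Seifert form. Hence $\sigma_{K_\eta(J)}=\sigma_K$ identically: the Levine--Tristram function is inherited from the seed, not from the infection knot. So the signature of your $E_d$ cannot be localized by choosing $J$ with Alexander polynomial supported at primitive $d$-th roots of unity. (Relatedly, Proposition~\ref{theorem:satellite-bound-grope} as stated requires the \emph{infection knots} to be connected sums of Horn's knots, not arbitrary classical knots, so your invocation of it is also off.)

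There is a second gap in your argument for property~(3). Avoiding the finitely many jump points of $\sigma_{J^0_j}$ does not make the value zero; it only lands you on a plateau, and by property~(2) at least one such plateau has height exceeding $C$. Since the $d_i$-th roots of unity become dense in $S^1$ as $d_i\to\infty$, some will inevitably fall on that plateau, so choosing $d_i$ large cannot by itself force $\sigma_{J^0_j}(\omega_i^r)=0$.

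The paper's route avoids both issues by working directly with Horn's knots, whose signature functions are explicit step functions. One first forms the bump $S_i=-P_{m_i}\# P_{m_{i+1}}$, whose signature is supported in a short arc containing $\omega_i$ and missing all $\omega_j^r$ with $j<i$, and then applies a $(d_i,1)$-cabling trick: setting $J_i'=S_i\#(-S_i')$ with $S_i'$ the $(d_i,1)$-cable gives $\sigma_{J_i'}(\omega)=\sigma_{S_i}(\omega)-\sigma_{S_i}(\omega^{d_i})$, which simultaneously kills the signature integral and forces the values at non-primitive $d_i$-th roots to vanish. The grope condition is preserved because cabling by $(d,1)$ with $d$ odd preserves the Arf invariant, and $\G_2=\F_0$ is exactly the Arf-zero subgroup.
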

Its proof resembles that of \cite[Proposition 4.12]{cha2010amenable}, which shows the existence of knots satisfying the condition (2), (3), and (4).
\begin{proof}
The Levine-Tristram signature function of Horn's knots $P_m$ in Figure~\ref{figure:P_m} is calculated by~Horn:
\begin{proposition}\cite[Proposition~3.1, Lemma~6.1]{horn2010the}
Each $P_m$ bounds a grope of height 2 in $D^4$ whose Levine-Tristram signature function $\sigma_{P_m}$ is as follows:
\[
\sigma_{P_m}(\theta)= \begin{cases} 2 & \text{if } \theta_m<\mid\theta\mid\le\pi, \\[1ex]
    0 &\text{if }0\le\mid\theta\mid<\theta_m,\\[1ex]
	\end{cases}
  \]
where $\theta_m$ is a real number in $(0,\pi)$ such that
\[\cos(\theta_m)=\frac{2\sqrt[3]{m}-1}{2\sqrt[3]{m}}.\]
\end{proposition}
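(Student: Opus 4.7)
The plan is to prove the two assertions of the proposition separately: that $P_m$ bounds a height-2 grope in $D^4$, and that its Levine--Tristram signature function has the claimed step-function form with jump at $\theta_m$.

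For the grope claim, I would work directly with the clasper description in Figure~\ref{figure:P_m}. By Habiro's clasper calculus \cite{habiro2000claspers}, a class-2 ($Y$-graph) surgery with twist parameter $m$ on the unknot produces a knot that bounds an explicit Seifert surface $F_m$ assembled from bands following the edges of $T$. The class-2 structure guarantees that a symplectic basis $\{a_i,b_i\}$ of $H_1(F_m)$ can be chosen whose basis curves are isotopic to pushed-off parallels of the leaves of $T$ (or explicit band sums thereof); each such basis curve bounds an embedded surface in $D^4$ on the side of $F_m$ opposite to the pushoff, and these second-stage surfaces can be made pairwise disjoint. This is precisely the data of a symmetric grope of height~2 in $D^4$ bounded by $P_m$.

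For the signature computation, I would first write down the Seifert matrix $A = A_m$ of $P_m$ with respect to the basis above. Since $P_m$ differs from the unknot only by the clasper surgery with twist parameter $m$, the entries of $A_m$ depend linearly on $m$ through twist-induced linking numbers, and the matrix has a transparent block structure. The Hermitian matrix $H(\omega) := (1-\omega)A_m + (1-\bar\omega)A_m^T$ with $\omega = e^{i\theta}$ then has $\det H(\omega)$ equal to $|1-\omega|^{2g}$ times a real polynomial in the real variable $x := \cos\theta$. Setting this polynomial equal to zero and simplifying should yield a cubic relation; solving in $(-1,1)$ produces a unique root
\[
x_m = \frac{2\sqrt[3]{m}-1}{2\sqrt[3]{m}},
\]
which defines $\theta_m$ and explains the cube-root appearance in the statement.

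To pin down the value of $\sigma_{P_m}$ on each component of $(0,\pi]\setminus\{\theta_m\}$, I would use two anchor values. First, $\sigma_{P_m}(\theta)\to 0$ as $\theta\to 0^+$ because $H(e^{i\theta})\to 0$; second, $\sigma_{P_m}(\pi)=\sign(A_m+A_m^T)$ is the ordinary knot signature of $P_m$, which a direct computation of the symmetrized Seifert form shows equals $+2$ for all $m\ge 1$. Since $\sigma_{P_m}$ is piecewise constant with jumps confined to zeros of $\det H(\omega)$ on $|\omega|=1$, and by the previous paragraph the only such zero in $(0,\pi]$ is $\theta=\theta_m$, the signature function must equal $0$ on $(0,\theta_m)$ and $2$ on $(\theta_m,\pi]$, as claimed. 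The main obstacle I foresee is verifying that the jump at $\theta_m$ has size exactly $+2$ (rather than $\pm 1$, $\pm 3$, or a compensating pair of jumps of opposite sign clustered near $\theta_m$); this reduces to a local eigenvalue analysis of $H(\omega)$ as $\omega$ traverses the unit circle through $e^{i\theta_m}$, checking the rank drop there and the signs of the perturbed eigenvalues on either side, both of which become explicit once $A_m$ is in hand.
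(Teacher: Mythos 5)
The paper does not prove this statement: it is quoted from Horn \cite{horn2010the} (his Proposition~3.1 and Lemma~6.1) and used as a black box inside the proof of Proposition~\ref{prop:knot-of-cha}. There is therefore no argument of the paper's own to compare yours against; the correct reading of the citation is that both claims --- that $P_m$ bounds a height-$2$ grope in $D^4$, and that $\sigma_{P_m}$ is the indicated step function --- are established in Horn's paper, not here.

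That said, your outline is a plausible reconstruction of the kind of argument Horn makes: exhibit a Seifert surface for $P_m$ from the clasper picture, read off its Seifert matrix, and locate the unit-circle zeros of the Alexander polynomial to find where $\sigma_{P_m}$ jumps, with the $\sqrt[3]{m}$ arising from a cubic in $\cos\theta$. Two points in your sketch deserve more care. For the grope claim, invoking ``Habiro's clasper calculus'' by itself does not yield a grope in $D^4$; what is actually needed is a translation from tree-clasper surgery on the unknot to a grope bounded in $D^4$ (results of this type appear in Conant--Teichner and Cochran--Teichner), and the height of the resulting grope is governed by the \emph{shape} of the tree $T$, not merely by the surgery being ``class $2$.'' For the signature, your assumption that the entries of the Seifert matrix $A_m$ ``depend linearly on $m$ through twist-induced linking numbers'' is a guess about Horn's explicit surface; the ``$m$'' in Figure~\ref{figure:P_m} labels $m$ repeated pieces of the clasper, and whether this feeds into the Seifert form linearly must be verified from the explicit surface, not assumed. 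As you acknowledge, without $A_m$ in hand you cannot certify the formula for $\theta_m$ or the jump size $+2$; since the present paper simply cites Horn for exactly these facts, your sketch ends in the same place the paper itself stops.
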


Choose an increasing sequence $m_1, m_2, \dots$ of positive integers such that there is at least one prime number, say $d_i$, between $2\pi/\theta_{m_i}$ and $2\pi/\theta_{m_{i+1}}$.
Then the knot $S_i:=-P_{m_i}\#P_{m_{i+1}}$ has the bump Levine-Tristram signature function supported by neighborhoods of $\omega_i:=e^{2\pi\sqrt{-1}/d_i}$ and $\ol{\omega}_i$.
Note that for all $i$, the supports $\sigma_{S_i}^{-1}(\Z-\{0\})\subset S^1$ are disjoint each other. 

Let $S_i^\prime$ be the $(d_i, 1)$-cable of $S_i$, and let $J_i^\prime=S_i\#(-S_i^\prime)$.
By the property of the Levine-Tristram signature function of the cable of a knot (Lemma~4.13 of \cite{cha2010amenable}), we have $\sigma_{J^\prime_i}(\omega^k_j)=\sigma_{S_i}(\omega^k_j)-\sigma_{S_i}(\omega^{kd_i}_j)$.
Therefore, for $j=i$, we have $\sigma_{J^\prime_i}(\omega^k_i)=\sigma_{S_i}(\omega^k_i)-\sigma_{S_i}(1)=\sigma_{S_i}(\omega^k_i)$.
Thus, $\sigma_{J^\prime_i}(\omega^k_i)\neq0$ if and only if $k\equiv\pm1$ mod $d_i$.
Also, for $j<i$, we have $\sigma_{J^\prime_i}(\omega^k_j)=0$ since $\sigma_{S_i}(\omega^k_j)$ vanishes for any~$k$.

Also we have $\int_{S^1}\sigma_{S_i}(\omega)d\omega=\int_{S^1}\sigma_{S_i^\prime}(\omega)d\omega$.
It follows that $\int_{S^1}\sigma_{J^\prime_i}(\omega)d\omega=0$.
Now the desired knot $J^0_i$ is obtained by taking the connected sum of sufficiently large number of copies of~$J_i^\prime$.
\end{proof}
We use $J^0_i$ in place of $J^0$ in the construction of negatively amphichiral knots bounding gropes of height $n+2$ in $D^4$ in Subsection~\ref{subsection:iterated-infection} and call the resulting knot $J_i\equiv J^n_i$ for simplicity.
These knots are the prototype of knots in our main theorems (Theorems~\ref{theorem:maintheorem1}, \ref{theorem:maintheorem3} and~\ref{theorem:maintheorem2}).
The constant $C$ in Proposition~\ref{prop:knot-of-cha} will be specified explicitly in the following sections.

\subsection{Vanishing higher-order PTFA $L^2$-signature obstructions}\label{subsection:vanishing-PTFA-condition}

Here we show that each $J_i$ in Subsection~\ref{subsection:K0} are $(n)$-solvable with vanishing PTFA $L^2$-signature obstructions (Definition~\ref{def:vanishing-l2-signature-obstruction}), that is, there is an $(n)$-solution $V$ for $J_i$ such that for any group homomorphism $\phi\colon\pi_1(M(J_i))\to G$ with a PTFA group $G$ which extends to $\pi_1(V)$, $\rho(M(J_i),\phi)$ is equal to zero.

According to the proof of Proposition~\ref{prop:satellite-solvable}, $J_i$ is $(n)$-solvable with a special $(n)$-solution $V$ which is the union of a $(0)$-solution $V_1$ for $J^0_i$, $-V_1$ the orientation-reversed $V_1$, and $V_0$ a slice disk exterior of $U^n$.
For any group homomorphism $\phi\colon \pi_1(M(J_i))\to G$ with a PTFA group $G$ which extends to $\pi_1(V)$, by the Novikov additivity of $L^2$-signature defects (Theorem~\ref{thm:Novikov-additivity}), $\rho(M(J_i),\phi)=S(V,\phi)$ is equal to
\[S(V_0,\phi)+S(V_1,\phi)+S(-V_1,\phi).\]

Since $S(V_0,\phi)=\rho(M(U^n),\phi)$ and $U^n$ is a slice knot, by Theorem~1.2 of \cite{cha2010amenable}, $S(V_0,\phi)$ is equal to zero.
On the other hand, since $\pi_1(V_1)=\Z$, the image of  $\phi$ restricted to $\pi_1(M(J^0))$ lies in a cyclic subgroup of $ G $. Since $ G $ is torsion-free, this cyclic subgroup must be trivial or isomorphic to $\Z$.
By Theorem \ref{lem:rho-invariant-and-Levine-Tristram-invariant}, $S(V_1,\phi)=\rho(M(J^0_i),\phi)$ is equal to zero or the integral of the Levine-Tristram signature function of $J^0_i$ over $S^1$, which is also equal to zero by Proposition~\ref{prop:knot-of-cha}~(\ref{subprop}).
Similarly, $S(-V_1,\phi)$ is also zero.
Hence $\rho(M(J_i),\phi)$ vanishes, and this finishes the proof.

\section{Proof of Theorems \ref{theorem:maintheorem1} and \ref{theorem:maintheorem3}}\label{section:proof-of-theorem-A}

\subsection{Proof of Theorem \ref{theorem:maintheorem1} modulo infection axis analysis}\label{subsection:proof-of-theorem-A}
We construct knots $J_1,J_2,\dots$ using the iterated infection construction in Section~\ref{section:construction} with the following choice of $K$, $K^k$, and~$J^0_i$'s:

\begin{enumerate}
\item[$K$:]
Let $K$ be the connected sum of two copies of $\mathbb{E}_1$ as in Figure~\ref{figure:K}.
Take curves $\a$ and $\b$ in Figure \ref{figure:K} as axes.
\item[$K^k$:]
For any $k=1, \dots,n-1$, let $K^k$ be the knot in Figure~\ref{figure:K^k}.
%The box with label $-1$ in the figure denotes a full negative twist between two bands but individual bands left untwisted.
Note that $K^k$ is a ribbon knot with cyclic Alexander module $\Z[t^{\pm1}]/\langle 2t-5+2/t\rangle$, which is generated by $\eta_k$ in Figure~\ref{figure:K^k}.
We take $\eta_k$ as axes.

\begin{figure}[htb]
\labellist
\hair 0mm
\pinlabel{$\eta_k$} at 102 43
\endlabellist
  \centering
\includegraphics{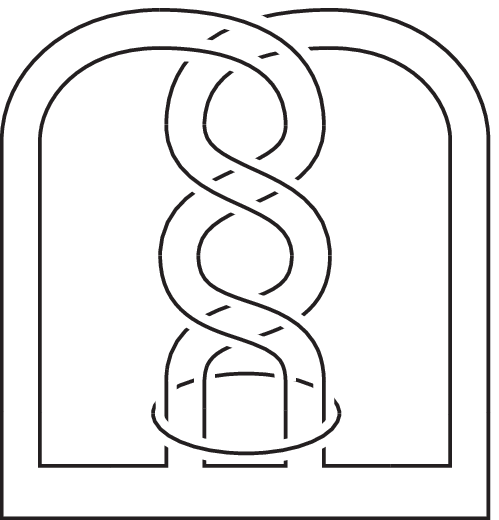}
   \caption{$K^k$}
  \label{figure:K^k}
\end{figure}

\item[$J^0_i$:] By Theorem~\ref{thm:cheeger-gromov-bound}, there is a constant which is greater than
\[\bigg|\rho(M(K),\phi_K)+\sum_{k=1}^{n-1} \rho(M(K^k),\phi_k)+\sum_{k=1}^{n-1} \rho(M(\ol{K^k}),\ol{\phi_k})\bigg|, \]
for any choice of homomorphisms $\phi_K$, $\phi_k$ and $\ol{\phi_k}$, $k=1,\dots,n-1$.
% of the zero framed surgery manifolds $M(K)$, $M(K^k)$, and $M(\ol{K^k})$, respectively.
Note that $K$ has the crossing number at most 16 and $K^k$ and $\ol{K^k}$ has at most 12 for all $k$.
By Theorem~\ref{theorem:explicit-cheeger-gromov}, $69713280(16+24(n-1))$ can be an explicit bound of the above equation, regardless of the choices of $\phi_K$, $\phi_k$ and $\ol{\phi_k}$.
Choose this constant as $C$ in the construction of $J^0_i$ in Proposition~\ref{prop:knot-of-cha}.
\end{enumerate}

\begin{remark}
In general, there is no harm to choose $\mathbb{E}_m\#\mathbb{E}_m$ as $K$ and any slice knot with cyclic Alexander module as $K^k$.
We choose specific $K$ and $K^k$ to find a particular $C$ in the construction of~$J^0_i$.
\end{remark}

We prove that knots $J_i$ generate a $\Z_2^\infty$ subgroup of $\G_{n+2}/\G_{n+2.5}$.
It is enough to show that no nontrivial finite connected sum of knots $J_i$ is $(n.5)$-solvable.

Suppose not.
We may assume there is $r>0$ such that $J= J_1\#J_2\#\dots \#J_r$ has an $(n.5)$-solution $V$ for $J$, just by deleting a finite number of $J_i$'s.

We are going to construct a $4$-manifold $W_0$ and a group homomorphism $\vp$ on $\pi_1(W_0)$, and then compute the $L^2$-signature defect of $(W_0,\vp)$ in two different ways.
By showing two values are not equal, we get a contradiction and finish the proof of the claim.

In general, for a seed knot $\tilde{K}$, its axes $\eta_1,\dots,\eta_r$ and infection knots $\tilde{J_1},\dots,\tilde{J_r}$, there is a standard cobordism from $M(\tilde{K}_{\eta_1,\dots,\eta_r}(\tilde{J_1},\dots,\tilde{J_r}))$ to the disjoint union $M(\tilde{K})\sqcup M(\tilde{J_1})\sqcup\dots\sqcup M(\tilde{J_r})$. 
It is 
$$M(\tilde{K})\times [0,1] \,\,\sqcup\,\,\amalg_{i=1}^r(-M(\tilde{J_i})\times [0,1])/\sim,$$
where the tubular neighborhood of $\eta_i$ in $M(\tilde{K})\times\{0\}$ is identified with the solid torus $(M(\tilde{J_i})-X(\tilde{J_i}))\times\{0\}$ attached during the surgery process.
See the proof of Lemma~2.3 of \cite{cochran2009knot} for more detail.

Since a connected sum can be viewed as an infection, there is a cobordism $E_n$ from $M(J)$ to $M(J_1)\sqcup M(J_2)\sqcup \dots \sqcup M(J_r)$.
Let $E_{n-1}$ be the cobordism from $M(J_1=J^n_1)$ to $M(K)\sqcup M(J^{n-1}_1)\sqcup M(\ol{J^{n-1}_1})$.
Also Let $E_i$, $i=0,1,\dots,n-2$, be a cobordism from $M(J^{i+1}_1)$ to $M(K^{i+1}) \sqcup M(J^i_1)$.
Note that $-E_i$, the orientation reversed $E_i$, is a cobordism from $M(\ol{J^{i+1}_1})$ to $M(\ol{K^{i+1}})\sqcup M(\ol{J^i_1})$.
Finally let $V^j$, $j=2,\dots,r$, be the $(n)$-solution for $J_j$ constructed in the proof of Proposition~\ref{prop:satellite-solvable}.
That is, $V^j$ is the union of $V^j_0$, a slice exterior of $U^n$, $V^j_1$, a $(0)$-solution for $J^0_j$, and $-V^j_1$.

Define
$$W_n=(\amalgover{j=2}{r} -V^j)\amalgover{\amalgover{j=2}{r} M(J_j)}{}E_n\amalgover{M(J)}{}V$$
and
$$W_{n-1}=E_{n-1}\amalgover{M(J_1)}{}W_n,$$
where 4-manifolds are glued along homeomorphic boundary components.
For $i=n-2, \dots,0$, define inductively
$$W_i=E_i\amalgover{M(J^{i+1}_1)}{}W_{i+1}\amalgover{M(\ol{J^{i+1}_1})}{}-E_i$$

%i+1}\amalgover{M(J^{i+2}_1)}{}\dots E_{n-2})\amalgover{M(J^{n-1}_1)}{}E_{n-1}\amalgover{M(\ol{J^{n-1}_1})}{}(-E_i\amalgover{M(\ol{J^{i+1}_1})}{}-E_{i+1}\amalgover{M(\ol{J^{i+2}_1})}{}-E_{n-1})\amalgover{M(J_1)}{}W_n.$$
Figure~\ref{figure:W_k} will be helpful to understand the construction.
%(If some boundary components of $W_i$ or $E_i$ are naturally homeomorphic to $M(\tilde{K})$, we consider $M(\tilde{K})$ as a subset of $W_k$ or $E_k$ without mentioning it.)

\begin{figure}[tb]
\labellist
\small
\pinlabel{$M(J_1)=M(J_1^n)$} at 122 91
\pinlabel{$M(J)$} at 122 29
\pinlabel{$V$} at 225 17
\pinlabel{$-V^2$} at 287 80
\pinlabel{$-V^r$} at 397 80
\pinlabel{$\dots$} at 344 75
\pinlabel{$E_n$} at 225 65
\pinlabel{$E_{n-1}$} at 225 112
\pinlabel{$E_{n-2}$} at 28 150
\pinlabel{$-E_{n-2}$} at 225 150
\pinlabel{$M(J_1^{n-1})$} at 48 132    
\pinlabel{$M(\ol{J_1^{n-1}})$} at 200 132
\pinlabel{$M(K)$} at 130 147
\pinlabel{$M(K^{n-1})$} at 85 180
\pinlabel{$M(\ol{K^{n-1}})$} at 170 180   
\pinlabel{$M(J_2)$} at 287 62    
\pinlabel{$M(J_r)$} at 397 62   
\pinlabel{$M(J^{i}_1)$} at 6 245
\pinlabel{$M(K^{i+1})$} at 46 246
\pinlabel{$M(\ol{K^{i+1}})$} at 197 246
\pinlabel{$M(\ol{J^i_1})$} at 247 246
\pinlabel{$\vdots$} at 32 185
\pinlabel{$\vdots$} at 225 185
\pinlabel{$E_i$} at 45 210
\pinlabel{$-E_i$} at 239 210    
\endlabellist
\includegraphics{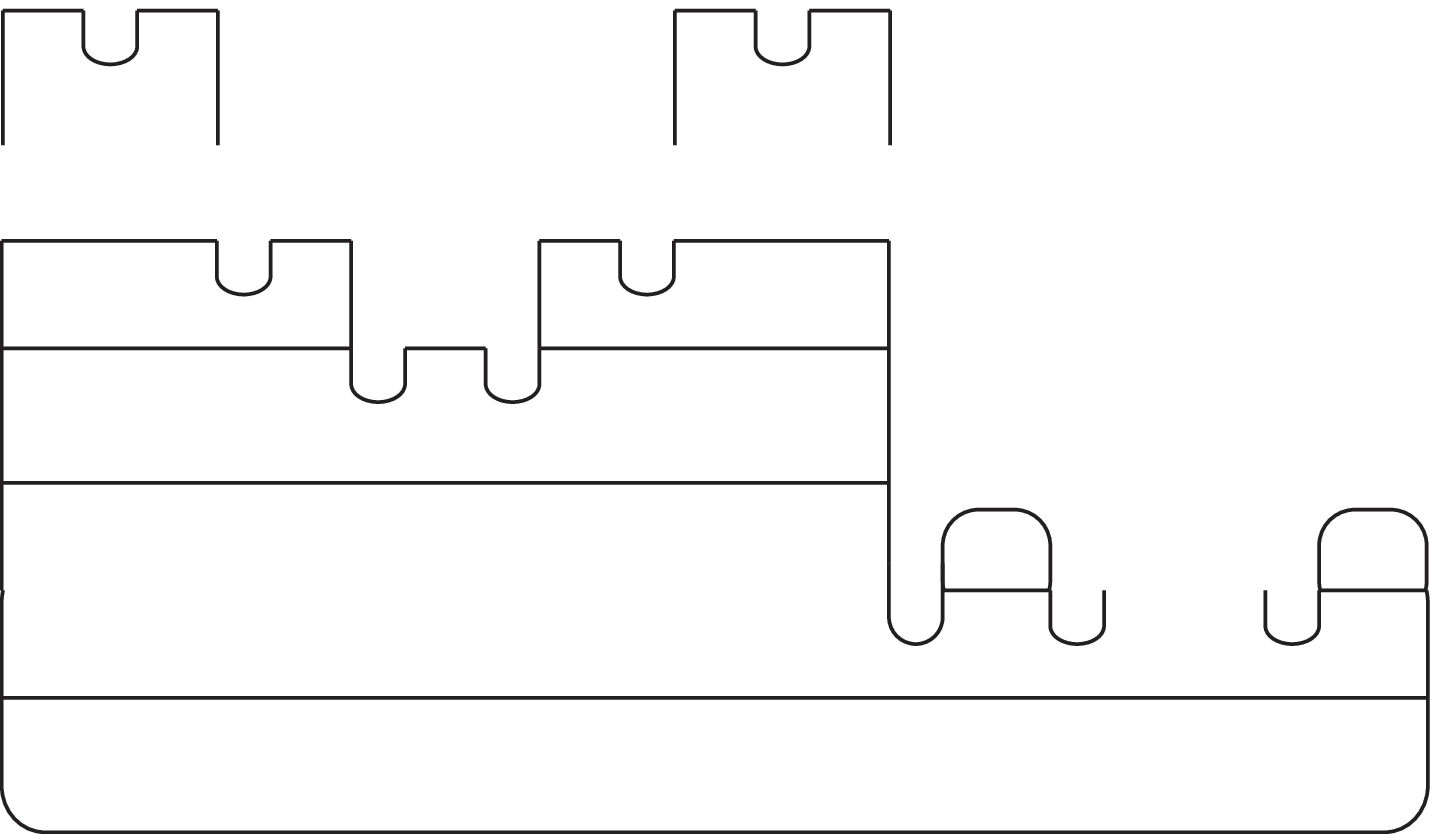}
\caption{$W_i$}
\label{figure:W_k}
\end{figure}

Now we define a homomorphism $\vp_k$ on $W_k$.
Denote $\pi_1(W_i)$ by $\pi$ for simplicity.
We construct a normal series $\{\S_k\pi\}$, $k=0,1,\dots,n+1$, of~$\pi$.

Let $\S_k\pi=\pi^{(k)}_r$ for $k=0,1$, and $2$, be the $k$-th rational derived subgroup of $\pi$.
Recall that the $k$-th rational derived subgroup $G^{(k)}_r$ of a group $G$ is defined inductively as the kernel of the map
\[G^{(k)}\to \frac{G^{(k)}}{[G^{(k)},G^{(k)}]}\otimes_\Z \Q ,\]
with the initial condition $G^{(0)}_r=G$.

By abuse of notation, we denote the element in $\pi$ represented by the curve $\b\subset M(K)$ as $\b$.
Observe that for any element $x\in\pi$, $x\b x^{-1}$ lies in the first rational derived subgroup $\pi^{(1)}_r$ of $\pi$.
Let $S$ be the subset of $\Z[\S_1\pi/\S_2\pi]\subset\Z[\pi/\S_2\pi]$ multiplicatively generated by
\[\{p(\mu^j\b\mu^{-j})|j\in\Z\},\]
where $p(t)=2t-5+2/t$ is the Alexander polynomial of $K^{n-1}$ and $\mu$ is the meridian of $K$.
%, which represents a generator of  $\pi/\S_1\pi$.
%(By the Mayer-Vietoris sequence arguments, the inclusion map induces an isomorphism between $H_1(W_i,\Z)$ and $H_1(M(K),\Z)$.
%Hence  $\pi/\S_1\cong H_1(W_i,\Z)$ is isomorphic to $\Z$ and generated by $\mu$.)
We want to localize $\Z[\pi/\S_2\pi]$ with $S$.
To do that, we have to check whether $S$ is a \emph{right divisor set} since $\Z[\pi/\S_2\pi]$ may not be commutative.

A multiplicative subset $S_0$ of a (noncommutative) domain $R$ is a \emph{right divisor set of $R$} if $0$ is not in $S_0$ and for any $a\in R$ and $s\in S_0$, the intersection $aS_0\cup sR$ is nonempty.
It is well-known that the right localized ring $RS_0^{-1}$ exists if and only if $S_0$ is a right divisor set of $R$ \cite[p.427]{passman1976the}, \cite[Section~3.1]{cochran2011primary}.
The following result is useful to show that $S$ is a right divisor~set:

\begin{proposition}\cite[Proposition~4.1]{cochran2011primary}\label{proposition:right-divisor-set}
Let $R$ be a ring and $G$ be a group. Suppose $A$ is a normal subgroup of $G$ such that the group ring $R[A]$ is a domain. If $S_0$ is a right divisor set of $R[A]$ that is $G$-invariant ($g^{-1}S_0g=S_0$ for all $g\in G$), then $S_0$ is a right divisor set of $R[G]$.
\end{proposition}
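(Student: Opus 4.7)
The plan is to verify the two defining conditions for $S_0$ to be a right divisor set of $R[G]$. That $0\notin S_0$ is immediate from the hypothesis on $R[A]$, so the real work is to show that for every $a\in R[G]$ and every $s\in S_0$, the intersection $aS_0\cap sR[G]$ is nonempty. Since $A$ is normal in $G$, every element of $R[G]$ can be written as a finite sum $a=\sum_{i=1}^{n} a_i g_i$ with $a_i\in R[A]$ and $g_1,\dots,g_n$ representatives of distinct cosets of $A$ in $G$. The strategy is to handle the single-coset case first and then induct on $n$.

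For the single-coset case $a=a_1 g$, the pivotal observation is that the $G$-invariance $g^{-1}S_0 g=S_0$, combined with normality of $A$, makes conjugation by $g$ a self-bijection of $S_0\subseteq R[A]$. I would apply the right divisor set property of $S_0$ inside $R[A]$ to the pair $(a_1,s)$ to obtain $u\in S_0$ and $b_1\in R[A]$ with $a_1 u=sb_1$, and then set $t=g^{-1}ug\in S_0$. A short computation gives
\[
at \;=\; a_1 g\cdot g^{-1}ug \;=\; a_1 u\cdot g \;=\; sb_1 g,
\]
so $at\in aS_0\cap sR[G]$ with $b=b_1 g\in R[G]$.

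For the inductive step, set $a'=\sum_{i=1}^{n-1} a_i g_i$ and use the induction hypothesis to produce $t'\in S_0$ and $b'\in R[G]$ with $a't'=sb'$. The remaining term $a_n g_n t'$ is again supported on a single coset, since $g_n t' g_n^{-1}\in g_n S_0 g_n^{-1}=S_0\subseteq R[A]$ by $G$-invariance, so the single-coset case supplies $t''\in S_0$ and $b''\in R[G]$ with $a_n g_n t'\cdot t''=sb''$. Multiplicative closure of $S_0$ gives $t't''\in S_0$, and
\[
a\cdot t't'' \;=\; a't't''+a_n g_n t't'' \;=\; s(b't''+b''),
\]
closing the induction. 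The only genuinely substantive step is the single-coset reduction, and $G$-invariance of $S_0$ is precisely what makes it go through; the rest is routine induction together with the multiplicative closure built into the definition of a divisor set, so I do not expect any real obstacle.
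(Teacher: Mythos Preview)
Your argument is correct. Note, however, that the paper does not supply its own proof of this proposition: it is quoted verbatim from \cite[Proposition~4.1]{cochran2011primary} and used as a black box, so there is no in-paper proof to compare against. Your induction on the number of $A$-cosets in the support of $a$, with the single-coset case handled via the conjugation $t=g^{-1}ug$ furnished by $G$-invariance, is exactly the standard verification of the right Ore condition in this situation and matches the argument in the cited source.

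One small remark: the paper's stated definition of a right divisor set presupposes the ambient ring is a domain, whereas the proposition only assumes $R[A]$ is a domain. Your proof establishes the right Ore condition $aS_0\cap sR[G]\neq\emptyset$ regardless, which is what is actually needed for the localization $R[G]S_0^{-1}$ to exist; in the paper's applications the ambient ring $R[G]$ is in fact a domain (the relevant quotients $\pi/\S_k\pi$ are PTFA), so no issue arises.
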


Note that $S$ is a right divisor set of $\Z[\S_1\pi/\S_2\pi]$ since $\Z[\S_1\pi/\S_2\pi]$ is a commutative ring.
Also $S$ is $\pi/\S_2\pi$-invariant by definition.
By Proposition~\ref{proposition:right-divisor-set}, $S$ is a right divisor set of  $\Z[\pi/\S_2\pi]$ and a left $\Z[\pi/\S_2\pi]$-module $\Z[\pi/\S_2\pi]S^{-1}$ is well-defined.

Recall $\S_2\pi/[\S_2\pi,\S_2\pi]$ has the right $\Z[\pi/\S_2\pi]$-module structure induced by conjugation action of $\pi/\S_2\pi$ on $\S_2\pi/[\S_2\pi,\S_2\pi]$
($x\ast g=g^{-1}xg$ for any $g\in\pi/\S_2\pi$ and $x\in\S_2\pi/[\S_2\pi,\S_2\pi]$).
Define \[\S_3\pi=\ker\{\S_2\pi\to\frac{\S_2\pi}{[\S_2\pi,\S_2\pi]}\otimes_{\Z[\pi/\S_2\pi]}\Z[\pi/\S_2\pi]S^{-1}\}.\]

For $3<k<n+1$, define $S_k= (S_3\pi)^{(k-3)}_r$ and for $k=n+1$, define %$$ is defined as the kernel of the map
\[\S_{n+1}\pi=\ker\{\S_n\pi\to \frac{\S_n\pi}{[\S_n\pi,\S_n\pi]}\otimes_\Z \Z_d\},  \]
where $d=d_1$ is the prime number used in the construction of $J^0_1$ (see Proposition~\ref{prop:knot-of-cha}).

Let $G_k =\pi/\S_{n-k+1}\pi$ and $\vp_k\colon \pi=\pi_1(W_i)\to  G_k$ be the quotient map.
We denote $\vp_0$ by $\vp$ and $G_0$ by $G$ for simplicity.
We are going to compute $L^2$-signature defect $S(W_0,\vp)$ in two ways, which give different values.

\vskip 5mm
\emph{First method.} By the Novikov additivity theorem (Theorem~\ref{thm:Novikov-additivity}), $S(W_0,\vp)$ is equal to
$$S(V,\vp)+S(E_n,\vp)+S(E_{n-1},\vp)+\sum_{k=0}^{n-2}(S(E_k,\vp)+S(-E_k,\vp))-\sum_{i=2}^r S(V^i,\vp),$$
where all the homomorphisms induced from $\vp$ are denoted by $\vp$ for simplicity.
We compute each summand as follows:

(1) $G$ has a subnormal series
\[0=G^{n+1}\lhd G^n \lhd \dots \lhd G^1\lhd G^0=G,\]
where $G^i\equiv \S_i\,\pi/\S_{n+1}\,\pi<\pi/\S_{n+1}$. Since $G^i/G^{i+1}$ is abelian and has no torsion coprime to $d$, $G$ is amenable and in $D(\Z_d)$ by Lemma~\ref{lemma:amenable_group}. 
By applying Theorem \ref{thm:homology-cobordism-invariance-on-amenable} on $(V,\vp)$ we get $S(V,\vp)=0$.

(2) Lemma 2.4 of \cite{cochran2009knot} implies that $S(E_k,\vp)$  and $S(-E_k,\vp)$, $k=0,\dots,n$ are all zero.

(3) Again by the Novikov additivity theorem, $S(V^j,\vp)$ is equal to
\[S(V^j_0,\vp)+S(V^j_1,\vp)+S(-V^j_1,\vp),\]
where $V^i_0$ is a slice exterior of $U^n$, and $V^i_1$ is a $(0)$-solution for $J^0_i$ with $\pi_1(V^i_1)=\Z$, respectively.
Similar calculation to Subsection~\ref{subsection:vanishing-PTFA-condition} gives that $S(V^i_0,\vp)=0$.
Since the meridian of $J^0_i$ lies in the $n$-th derived subgroup of $\pi$, the image of $\vp$ lies in $G^n$, an abelian subgroup of $G$ which consists of elements of order $1$ and $d$.
By Lemma~\ref{lem:rho-invariant-and-Levine-Tristram-invariant}, $S(V^i_1,\vp)$ is equal to 0 or
\[\sum_{r=0}^{d-1} \sigma_{J^0_i}\left(e^{2\pi r\sqrt{-1}/d}\right),\]
which is again 0 by the choice of $J^0_i$ (Proposition~\ref{prop:knot-of-cha}, condition (2)).
Similarly, $S(-V^i_1,\vp)$ is also equal to~0.

In sum, we conclude that $S(W_0,\phi)$ is equal to 0.

\vskip 5mm
\emph{Second method.} By the von Neumann index theorem, the $L^2$-signature defect of a 4-manifold is equal to the $\rho$-invariant of the boundary with the inclusion-induced homomorphism.	
Hence $S(W_0,\vp)$ is equal to
\begin{equation}\label{equation:L2-signature-defect-of-W0}
\rho(M(K),\vp)+\sum_{i=1}^{n-1}(\rho(M(K^i),\vp)+\rho(M(\ol{K^i}),\vp))+\rho(M(J^0_1),\vp)+\rho(M(\ol{J^0_1}),\vp).
\end{equation}
For the estimation of this value, the following lemma is crucial:

\begin{lemma}\label{Lemma:nontriviality}
For $k=0,1,\dots,n-1$, the homomorphism
\[\vp_k\colon \pi_1(W_k)\to \frac{\pi_1(W_k)}{\S_{n-k+1}\pi_1(W_k)}\]
sends the meridian $\mu_k$ of $J^k_1$ and that of $\ol{\mu}_k$ of $\ol{J^k_1}$ into the subgroup $S_{n-k}\pi_1(W_k)/\S_{n-k+1}\pi_1(W_k)$.
Also, $\vp_k(\mu_k)$ is nontrivial for any $k=0,\dots,n-1$, while $\vp_k(\ol{\mu}_k)$ is nontrivial for $k=n-1$ and trivial for other~$k$.
\end{lemma}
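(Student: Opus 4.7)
The plan is to prove all three assertions by downward induction on $k$, starting from $k=n-1$ and descending to $k=0$. The key geometric input at each level is the standard infection cobordism identification: inside $\pi_1(E_k)$ for $k<n-1$ one has $\mu_{J^k_1}=\ell_{\eta_{k+1}}$ and $\mu_{J^{k+1}_1}=\mu_{K^{k+1}}$, while inside $\pi_1(E_{n-1})$ one has $\mu_{J^{n-1}_1}=\ell_\a$ and $\mu_{\ol{J^{n-1}_1}}=\ell_\b$. Because the Seifert-framed longitude of a nullhomologous curve is freely homotopic to the curve itself in its complement, each $\mu_k$ (resp.\ $\ol{\mu}_k$) is conjugate in $\pi_1(W_k)$ to the axis $\eta_{k+1}$ or $\a$ (resp.\ $\ol{\eta}_{k+1}$ or $\b$) on the appropriate seed knot.

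For the base case $k=n-1$, membership in $\S_1\pi_1(W_{n-1})$ is immediate from $\lk(\a,K)=\lk(\b,K)=0$, and nontriviality in $\S_1/\S_2$ follows from a Mayer-Vietoris computation reducing the question to the rational Alexander module of $K=\mathbb{E}_1\#\mathbb{E}_1$, where the two axes lie on the two distinct connected-sum factors and are easily checked to be nonzero. The inductive membership step is then routine: if $\mu_{k+1}\in\S_{n-k-1}\pi_1(W_k)$, then $\pi_1(M(K^{k+1}))\subset\S_{n-k-1}$ because it is normally generated by $\mu_{K^{k+1}}=\mu_{k+1}$, and hence $\eta_{k+1}\in[\S_{n-k-1},\S_{n-k-1}]\subset\S_{n-k}$. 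The inclusion $[\S_j,\S_j]\subset\S_{j+1}$ is automatic from the definition for $j\neq 2$; the case $j=2$ uses that $[\S_2,\S_2]$ maps to zero in $\S_2/[\S_2,\S_2]$ before any localization.

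To show nontriviality of $\vp_k(\mu_k)$ for $k<n-1$, I would perform a Mayer-Vietoris analysis of the higher-order Alexander module $\S_{n-k}/[\S_{n-k},\S_{n-k}]$ of $\pi_1(W_k)$. The contribution of the piece $M(K^{k+1})$ is a cyclic $\Z[\pi_1(W_k)/\S_{n-k}]$-summand generated by $\eta_{k+1}$ with defining relation $p(\mu_{K^{k+1}})=0$, where $p(t)=2t-5+2/t$ is the Alexander polynomial of $K^{k+1}$. One then verifies that this generator survives the specific quotient used to define $\S_{n-k+1}$: rationalization for $n-k\neq 2,n$ (where $p$ is nonzero over $\Q$), reduction modulo $d=d_1$ for $n-k=n$ (where $p$ remains nonzero because $d$ is odd and $p$ has leading coefficient $2$), and $S$-localization for $n-k=2$. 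The triviality of $\vp_k(\ol{\mu}_k)$ for $k<n-1$ is handled symmetrically: the case $k=n-2$ is immediate from $S$-localization, since the relation $p(\mu_{\ol{K^{n-1}}})\cdot\ol{\eta}_{n-1}=0$ together with $\mu_{\ol{K^{n-1}}}\sim\b$ gives $p(\b)\cdot\ol{\eta}_{n-1}=0$ in $\S_2/[\S_2,\S_2]$ and $p(\b)\in S$; the cases $k<n-2$ follow inductively, using that $\ol{\mu}_{k+1}\in\S_{n-k}$ forces $\pi_1(M(\ol{K^{k+1}}))\subset\S_{n-k}$ and so $\ol{\eta}_{k+1}\in[\S_{n-k},\S_{n-k}]\subset\S_{n-k+1}$.

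The hardest step is the $S$-localization case at $n-k=2$ in the nontriviality argument: I must show that $p(\mu_{K^{n-1}})=p(\ell_\a)$ is not a unit in $\Z[\pi_1(W_{n-2})/\S_2]S^{-1}$. This amounts to a strong coprimality statement between $p(\a)$ and the family $\{p(\mu^j\b\mu^{-j})\}_{j\in\Z}$ inside the noncommutative ring $\Z[\pi_1(W_{n-2})/\S_2]$, and is where the geometric separation of the two axes $\a,\b$ onto distinct $\mathbb{E}_1$-summands of the seed knot $K=\mathbb{E}_1\#\mathbb{E}_1$ plays its essential role. The argument parallels the strong coprimality machinery developed in~\cite{cochran2011primary}.
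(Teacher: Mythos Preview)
Your overall architecture is right: downward induction, the identification $\mu_k\sim\eta_{k+1}$ via the infection cobordism, the membership step through commutators, the triviality of $\vp_k(\ol\mu_k)$ via $p(\b)\in S$ followed by the derived-series cascade, and the recognition that the $S$-localization at $k=n-2$ is the delicate point. All of this matches the paper.

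The gap is in your nontriviality argument for $\vp_k(\mu_k)$. You write that a Mayer--Vietoris analysis exhibits the contribution of $M(K^{k+1})$ as a cyclic \emph{summand} of the higher-order Alexander module of $W_{k+1}$, so that it suffices to check $\eta_{k+1}$ survives the localization defining $\S_{n-k+1}$. But Mayer--Vietoris only gives an exact sequence, not a direct-sum splitting, and among the pieces glued into $W_{k+1}$ is the putative $(n.5)$-solution $V$, about whose fundamental group you know essentially nothing. So you cannot compute $H_1(W_{k+1},\R)$ directly, and nonvanishing of $\eta_{k+1}$ in $H_1(M(K^{k+1}),\R)$ does not a priori imply nonvanishing after pushing into $W_{k+1}$. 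The same problem afflicts your base case $k=n-1$: showing $\a,\b$ are nonzero in the rational Alexander module of $K$ is easy, but the inclusion $M(K)\hookrightarrow W_{n-1}$ need not induce an injection on Alexander modules.

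The paper resolves this with the higher-order Blanchfield form. One uses that $W_{k+1}$ is an $R$-coefficient $(n-k)$-bordism (this is where the $(n.5)$-solution hypothesis on $V$ enters) to conclude, via the self-annihilation theorem, that the kernel $P$ of $H_1(M(J_1^{k+1}),\R)\to H_1(W_{k+1},\R)$ satisfies $P\subset P^\perp$. If $\eta_{k+1}\in P$, then since $\eta_{k+1}$ generates $H_1(M(J_1^{k+1}),\R)$ the entire Blanchfield form vanishes; nonsingularity then forces $H_1(M(J_1^{k+1}),\R)=0$, and \emph{only now} does the problem reduce to the module computation you describe. The base case uses the same mechanism together with simplicity of the Alexander module of $\mathbb{E}_1$. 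Your sketch omits this Blanchfield-form bridge entirely, and without it the argument does not go through.
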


Assuming Lemma \ref{Lemma:nontriviality} is true, we finish the proof of Theorem \ref{theorem:maintheorem1}.
For simplicity, denote $\pi_1(W_0)$ as $\pi$.
Note that, by the definition of $\S_k$, there is a natural inclusion
\[\S_{n}\pi/\S_{n+1}\pi\hookrightarrow\frac{\S_{n}\pi}{[\S_{n}\pi,\S_{n}\pi]}\otimes_\Z\Z_d.\]
Hence $\S_{n}\,\pi/\S_{n+1}\pi$, which contains the image of $\mu_0$ and $\ol{\mu}_0$, is an abelian subgroup of $\pi/\S_{n+1}\pi$ and every nontrivial element in $\S_n\pi/\S_{n+1}\pi$ has order $d$.
By Lemma \ref{Lemma:nontriviality} $\vp(\mu_0)$ has order~$d$.

Now we can apply Lemma~\ref{lem:rho-invariant-and-Levine-Tristram-invariant} for the calculation of $\rho(M(J^0_1),\vp)$ and $\rho(M(\ol{J^0_1}),\vp)$.
By the choice of $J^0_1$ and $C$,
\[\rho(M(J^0_1),\vp)=\sum_{r=0}^{d-1} \sigma_{J^0_1}(e^{2\pi r\sqrt{-1}/d})>C>|\rho(M(K),\vp)+\sum_{i=1}^{n-1}(\rho(M(K^i),\vp)+\rho(M(\ol{K^i}),\vp))|\]

On the other hand, $\vp(\ol{\mu}_0)$ is zero, so $\rho(M(\ol{J^0_1}),\vp)$ vanishes.
Hence by Equation~(\ref{equation:L2-signature-defect-of-W0}), $S(W_0,\vp)$ is strictly greater than~$0$.

\vskip 5mm
Based on the above two contradictory calculations of $S(W_0,\vp)$, we conclude that any nontrivial finite connected sum of $J_i$'s cannot vanish in $\F_n/\F_{n.5}$. 
Hence to finish the proof of Theorem~\ref{theorem:maintheorem1} it remains to prove Lemma~\ref{Lemma:nontriviality} to whom the next subsection is devoted.

\subsection{Nontriviality of the infection axes}

In this subsection we prove Lemma~\ref{Lemma:nontriviality}.
Results on \emph{higher-order Blanchfield linking forms} are used at the crucial step so we first introduce them and proceed the proof.
More detailed argument can be found at Section~5 of \cite{cha2010amenable}.
\vskip 5mm

\subsubsection*{Higher-order Blanchfield forms}

Suppose that there are a commutative ring $R$ with unity and a closed 3-manifold $M$ endowed with a group homomorphism $\phi\colon\pi_1(M)\to G$ satisfying:
\begin{enumerate}
\item[(BL1)] the group ring $R[G]$ is an Ore domain, that is, there is the quotient skew-field $\mathcal{K}\equiv R[G](R[G]-\{0\})^{-1}$ of~$R[G]$.
\item[(BL2)] $H_1(M,\mathcal{K})$ vanishes.
\end{enumerate}
Then for any localization ring $\R$ of the group ring $R[G]$ such that $R[G]\subset \R\subset\mathcal{K}$, there is a bilinear form over~$\R$,
\[\Bl \colon H_1(M,\R)\times H_1(M,\R)\to \mathcal{K}/\R\]
which is called the \emph{higher-order Blanchfield linking form on $H_1(M,\R)$}.

Note that if $G$ is PTFA and $R$ is $\Z$ or $\Z_d$ for some prime $d$, and $M$ is the zero framed surgery $M(K)$ on a knot $K$ endowed with nontrivial $\phi$,  then the condition (BL1) an (BL2) are satisfied (see Section~5.1 of \cite{cha2010amenable}).
%Also, when $M$ is the zero framed surgery $M(K)$ on a knot $K$ and $\phi$ is nontrivial, then (BL2) is also satisfied (see Section~5.1 of \cite{cha2010amenable}.)

The following two theorems on Blanchfield linking forms are main tools on the proof of Lemma~\ref{Lemma:nontriviality}.
The proof for $R=\Z_d$ case is exactly the same with $R=\Z$ case which is proved in \cite{cochran2011primary,cochran2009knot}, so we omit the proof.

\begin{theorem}\cite[Lemma~7.16 for $R=\Z$]{cochran2011primary}\label{lemma:nonsingularity}
Let $K$ be a knot and $\phi\colon\pi_1(M(K))\to G$ be a group homomorphism with $G$ a PTFA group which factors nontrivially through $\Z$. 
Suppose $\R=(R[G])S^{-1}$ for $R=\Z$ or $\Z_d$ for some prime $d$ is an Ore localization where $S$ is closed under the natural involution on  $R[G]$. 
Then the Blanchfield linking form $\Bl$ on $H_1(M,\R)$ is non-singular.
\end{theorem}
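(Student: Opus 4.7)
\emph{Proof plan.} The strategy is to adapt the standard Bockstein-plus-Poincar\'e-duality construction of the higher-order Blanchfield form, as carried out for $R=\Z$ in \cite{cochran2011primary,cochran2009knot}, and verify that every step goes through uniformly when $R = \Z_d$ as well.

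First I would establish the vanishing $H_*(M; \K) = 0$ for $*=0,1,2$. The case $*=1$ is the hypothesis (BL2); the case $*=0$ follows from the fact that $\phi$ factors nontrivially through $\Z$, so in particular the meridian of $K$ has infinite order in $G$ and acts on $\K$ without nonzero invariants; the case $*=2$ follows by Poincar\'e duality together with the fact that $\K$ is a skew field, so every $\K$-module is projective and the relevant $\Ext$-terms vanish. With these vanishings in hand, the long exact sequence in cohomology associated to the coefficient sequence $0 \to \R \to \K \to \K/\R \to 0$ yields a Bockstein isomorphism
\[
H^2(M; \R) \xrightarrow{\ \cong\ } H^1(M; \K/\R).
\]

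Next I would combine this with Poincar\'e duality $H_1(M;\R) \cong H^2(M;\R)$ and the universal coefficient isomorphism $H^1(M;\K/\R) \cong \Hom_{\R}(H_1(M;\R),\K/\R)$. The composite is (up to the usual involution conventions) the adjoint of the Blanchfield form $\Bl$, so nonsingularity reduces to checking that each of these three identifications is an isomorphism. The assumption that $S$ is closed under the natural involution on $R[G]$ is what endows $\R$ with an induced involution and makes $\K/\R$ into a well-defined bimodule, so that the adjoint map and the duality statement are meaningful; this is exactly where the involution hypothesis gets used.

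The main technical issue is the universal coefficient step, which requires the vanishing of $\Ext^1_{\R}(H_0(M;\R),\K/\R)$, equivalently the injectivity of $\K/\R$ as an $\R$-module. For $R=\Z$ this is precisely \cite[Lemma~7.16]{cochran2011primary}. For $R=\Z_d$ the argument is structurally identical: one builds $\R$ as an iterated Ore localization of the commutative group rings $R[G^{(i)}/G^{(i+1)}]$ (each of which is a Noetherian Ore domain of global dimension at most one, since $G^{(i)}/G^{(i+1)}$ is torsion-free abelian and $R$ is a PID or field), and then $\K/\R$ is divisible and hence injective by standard homological algebra. Once these ingredients are in place the rest of the argument — checking that the composite pairing agrees with the chain-level definition of $\Bl$ — is purely formal and proceeds word-for-word as in the $R=\Z$ case.
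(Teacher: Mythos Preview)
The paper does not actually prove this statement: immediately after stating it, the author writes that the $R=\Z_d$ case is identical to the $R=\Z$ case handled in \cite{cochran2011primary,cochran2009knot} and omits the proof. Your sketch is precisely the standard Bockstein/Poincar\'e-duality/universal-coefficient argument from those references, so at the level of strategy you are aligned with what the paper has in mind.

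One technical point in your write-up needs care, though. You justify the universal-coefficient step by asserting that each $R[G^{(i)}/G^{(i+1)}]$ has global dimension at most one and that therefore $\K/\R$, being divisible, is injective. The global-dimension claim is false once the torsion-free abelian quotient has rank greater than one (a Laurent polynomial ring in $r$ variables over a field has global dimension $r$, and over $\Z$ it has global dimension $r+1$), and for general rings divisibility does not by itself imply injectivity. The cited references do establish the needed isomorphism $H^1(M;\K/\R)\cong\Hom_\R(H_1(M;\R),\K/\R)$, but via finer properties of Ore localizations of PTFA group rings rather than the shortcut you describe. So your outline is correct and matches the literature the paper defers to, but the specific justification you give for the injectivity step should be replaced by a pointer to the actual argument in \cite{cochran2003knot,cochran2011primary} (which carries over verbatim to $R=\Z_d$ since $\Z_d$ is a field).
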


The following is a generalization of \cite[Theorem~6.3]{cochran2009knot}.
For the definition of \emph{$R$-coefficient $(k)$-bordisms} which appear in this theorem, see  \cite[Section~5]{cochran2009knot} or \cite[Definition~5.5]{cha2010amenable}. 
We remark that all $W_i$ are $\Z$ or $\Q$-coefficient $(k)$-bordisms for any $0\leq i \leq n$, $1\leq k\leq n$, and $W_0$ is a $\Z_d$-coefficient $(n)$-bordism for any prime $d$ (see \cite[Lemma~5.7]{cha2010amenable}).

\begin{theorem}\cite[Theorem 4.4]{cha2010amenable}\label{theorem:cochran-harvey-leidy-theorem}
Let $R$ be $\Z$ or $\Z_d$ and $W$ be an $R$-coefficient $(k)$-bordism.
Let $\phi\colon\pi_1(W)\to G$ be a nontrivial group homomorphism where $G$ is a PTFA group with $G^{(k)}=1$.
Suppose, for each connected component $M_i$ of $\d W$ for which $\phi$ restricted to $\pi_1(M_i)$ is nontrivial, that $\b_1(M_i)=1$.
For any localization $\R$ of $R[G]$, let $P$ be the kernel of the inclusion-induced~map
\[H_1(\d W,\R)\to H_1(W,\R).\]
Then $P\subset P^\perp$ with respect to the Blanchfield form on $H_1(\d W,\R)$.
\end{theorem}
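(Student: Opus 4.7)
The plan is to reinterpret $\Bl$ on $H_1(\d W;\R)$ as a boundary invariant coming from the equivariant intersection form on the $G$-cover $\wt W$, and then exploit the $(k)$-bordism structure as a half-basis (a ``Lagrangian'') that forces pairings between classes in $P$ to land in $\R\subset\K$, so that they vanish in $\K/\R$.

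First I would set up the Blanchfield form via the Bockstein of $0\to\R\to\K\to\K/\R\to0$ combined with Poincaré--Lefschetz duality. On each boundary component $M_i$ of $\d W$ on which $\phi|_{\pi_1(M_i)}$ is nontrivial, the hypotheses ($G$ PTFA, $R=\Z$ or $\Z_d$, $\b_1(M_i)=1$) together with standard Ore-localization arguments give $H_*(M_i;\K)=0$, so $\Bl$ is defined and non-singular on the relevant summand of $H_1(\d W;\R)$, while boundary components with $\phi|_{\pi_1(M_i)}$ trivial contribute only an $\R$-summand orthogonal to everything in $P$ for trivial reasons. The long exact sequence of $(W,\d W)$ with $\R$-coefficients then identifies $P$ with the image of $H_2(W,\d W;\R)\to H_1(\d W;\R)$.

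Given $x,y\in P$, I would lift them to equivariant relative $2$-chains $\alpha,\beta$ in $\wt W$ whose boundaries realize $x,y$ upstairs. A standard duality chase (pushing one chain off the boundary and perturbing transversely) rewrites $\Bl(x,y)$ as the class modulo $\R$ of the equivariant intersection number $\alpha\cdot\beta\in\K$; so the task reduces to showing $\alpha\cdot\beta\in\R$. This is where the $(k)$-bordism hypothesis enters: by definition there is a $G$-equivariant half-basis $L_1,\dots,L_r,D_1,\dots,D_r$ of (a suitable localization of) $H_2(W)$ with each $L_i$ represented by a surface whose loops lie in $\pi_1(W)^{(k)}$ and whose equivariant self- and mutual intersections vanish. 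Since $G^{(k)}=1$, each $L_i$ maps trivially to $G$ and so lifts to disjoint copies in $\wt W$, making the span of the $L_i$'s isotropic for the equivariant form with $\R$-coefficients. Representing $[\alpha],[\beta]\in H_2(W,\d W;\R)$ using this Lagrangian half-basis (which is possible since the $L_i$'s together with the $D_i$'s and boundary cycles span $H_2(W,\d W;\R)$ after localization), the mixed intersections of boundary-parallel chains with $L_i$-pieces contribute elements of $\R$, and the $L_i$-with-$L_j$ intersections vanish, forcing $\alpha\cdot\beta\in\R$ and hence $\Bl(x,y)=0$.

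The main technical obstacle is the equivariant chain-level bookkeeping: one must promote the topological $(k)$-bordism data to $\R$-coefficient chains in $\wt W$, perform equivariant transverse perturbations while keeping $\d\alpha,\d\beta$ fixed on $\d W$, and verify that the Lagrangian half-basis survives both Ore localization at $S$ and, in the $R=\Z_d$ case, reduction mod $d$. These points are precisely what the definition of an $R$-coefficient $(k)$-bordism is engineered to accommodate, and once they are in place the rest of the argument is a direct adaptation of the Cochran--Orr--Teichner and Cochran--Harvey--Leidy bounding arguments for Blanchfield forms to the general localization $\R$.
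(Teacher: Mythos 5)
The paper cites this as Theorem~4.4 of \cite{cha2010amenable} and does not reprove it, so there is no in-paper proof to compare against; the argument in the source (which generalizes \cite[Theorem~4.4]{cochran2003knot} and CHL's Theorem~6.3) is a commutative-diagram chase built from Poincar\'e--Lefschetz duality, the Bockstein long exact sequence of $0\to\R\to\K\to\K/\R\to 0$, and the universal coefficient spectral sequence, rather than a direct equivariant intersection computation. Your sketch correctly identifies several of the ingredients --- the identification $P=\operatorname{im}\bigl(\partial\colon H_2(W,\partial W;\R)\to H_1(\partial W;\R)\bigr)$, the vanishing $H_*(M_i;\K)=0$ coming from $\beta_1(M_i)=1$ and $\phi|_{\pi_1(M_i)}$ nontrivial, and the fact that $G^{(k)}=1$ makes the $(k)$-bordism surfaces lift to the $G$-cover --- but the two central steps are asserted without proof, and one of them has a real gap.

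The gap is in the claim that the Lagrangian forces $\alpha\cdot\beta\in\R$. Even granting the (nontrivial, and here unproved) statement that $\Bl(x,y)$ equals the class of $\alpha\cdot\beta$ in $\K/\R$, the intersection number $\alpha\cdot\beta$ is only defined after passing to $\K$-coefficients via $H_2(W,\partial W;\K)\cong H_2(W;\K)$. When you then ``represent $[\alpha],[\beta]$ using the Lagrangian half-basis,'' the coefficients of the expansion $\alpha_\K=\sum a_iL_i+\sum b_iD_i$, $\beta_\K=\sum c_iL_i+\sum d_iD_i$ live in $\K$, not in $\R$. With the hyperbolic form $L_i\cdot L_j=D_i\cdot D_j=0$, $L_i\cdot D_j=\delta_{ij}$, one gets $\alpha_\K\cdot\beta_\K=\sum(a_id_i+b_ic_i)\in\K$, and nothing in the Lagrangian property pushes this into $\R$. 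Your conclusion would require $\alpha_\K$ and $\beta_\K$ to lie entirely in the span of the $L_i$'s, which you have not shown and which fails in general. The actual route instead shows that for $y\in P$ with lift $\tilde y\in H_2(W,\partial W;\R)$, the class $\mathrm{PD}(\tilde y)\in H^2(W;\R)$ lifts compatibly through the Bockstein (using the $(k)$-bordism hypotheses to control $H_2(W;\K)$), whence the adjoint $\Bl(\cdot,y)$ factors through the Kronecker pairing with $H_1(W;\R)$ and so vanishes on $P=\ker\bigl(H_1(\partial W;\R)\to H_1(W;\R)\bigr)$. Your outline would need this diagram chase (or a carefully justified substitute for the geometric intersection formula) to become a proof.
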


Now we prove Lemma~\ref{Lemma:nontriviality}.

\vskip 5mm

\subsubsection*{Proof of Lemma~\ref{Lemma:nontriviality}}

Note that the boundary of $W_k$ is the disjoint union of the zero framed surgery manifolds of $K$, $K^{n-1},\dots,K^{k+1}$, $\ol{K^{n-1}},\dots,\ol{K^{k+1}}$, $J^k_1$, and $\ol{J^k_1}$.
Hence curves in the exterior of these knots, such as meridians or axes used in the infection construction, can be considered as curves in $W_k$ as well.

By abuse of notation, isotoped curves will be denoted by the same symbols.
For example, in $W_k$, $\mu_k\subset M(J^k_1)$ can be isotoped into a meridian of $K^k$, which will be denoted as $\mu_k$.
Also, the axis curve $\eta_k\subset M(K^k)$ can be isotoped into $M(J^k_1)$ and this isotoped curve will be denoted as~$\eta_k$.

We prove that the meridian $\mu_k$ of $J^k_1$ represents an element in $\pi_1(W_k)^{(n-k)}$ by using the reverse induction on $k$.
For $k=n$, the statement is obvious.
Suppose $\mu_{k+1}$ lies in the $(n-k-1)$-th derived subgroup of $\pi_1(W_{k+1})$.
%Note that, in $W_k$, $\mu_k$ can be isotoped to the meridian of $K^{k+1}$.
Note that two curves $\mu_k$ and $\eta_{k+1}$ represent the same element in $\pi_1(W_k)$ since they are identified during the construction of $E_k$.
The isotoped curve $\eta_{k+1}\subset M(J^{k+1}_1)$ represents an element in the commutator subgroup of $\pi_1(M(J^{k+1}_1))$.
Since $\pi_1(M(J^{k+1}_1))$ is normally generated by  $\mu_{k+1}$, the inductive hypothesis  implies that $\mu_k=\eta_{k+1}$ lies in the $\pi_1(W_k)^{(n-k)}$.
Similarly, $\ol{\mu_k}$ lies in $\pi_1(W_k)^{(n-k)}$.

Since $\pi_1(W_k)^{(n-k)}\le\S_{n-k}\pi_1(W_k)$, $\vp_k(\mu_k)$ and $\vp_k(\ol{\mu_k})$ lie in $S_{n-k}\pi_1(W_k)/\S_{n-k+1}\pi_1(W_k)$, and we finish the proof of the first statement of Lemma~\ref{Lemma:nontriviality}.
%That $\vp_k(\ol{\mu}_k)$ lies in $S_{n-k}\pi_1(W_k)/\S_{n-k+1}\pi_1(W_k)$ can be proved similarly.

Now we prove the second statement by using the reverse induction on $k$.
Observe that a Mayer-Vietoris sequence argument shows that the inclusion $M(J^n_1)\hookrightarrow W_n$ induces an isomorphism
\[\Z=H_1(M(J^n_1))\cong H_1(W_n).\]
Hence $H_1(W_n)$ is isomorphic to $\pi_1(W_n)/\S_1\pi_1(W_n)=\pi_1(W_n)/\pi_1(W_n)^{(1)}$.
This implies $\mu_n$, representing a generator of $H_1(W_n)$, does not vanish in $\pi_1(W_n)/\S_1\pi_1(W_n)$.

Next, we show that $\vp_{n-1}(\mu_{n-1})$ and $\vp_{n-1}(\ol{\mu}_{n-1})$ are nontrivial.
Recall that $K$ is the connected sum of two copies of $\mathbb{E}_1$.
Then the Alexander module $H_1(M(K),\Q[t^{\pm1}])$ of $K$ splits into two copies of the Alexander module $H_1(M(\mathbb{E}_1),\Q[t^{\pm1}])$ of $\mathbb{E}_1$.
Consider the following composition of maps:
\[
\begin{diagram}
\node{\Phi\colon H_1(M(\mathbb{E}_1),\Q[t^{\pm1}])} \arrow{e}
\node{H_1(M(K),\Q[t^{\pm1}])} \arrow{e,t}{\iota_*}
\node{H_1(W_{n-1},\Q[t^{\pm1}]),}
\end{diagram}
\]
where the homologies are twisted by abelianization maps.
Here, the first map is induced from the left copy of $\mathbb{E}_1$ of $K=\mathbb{E}_1\#\mathbb{E}_1$ (see Figure~\ref{figure:K}) and the second map is induced from the inclusion $\iota\colon M(K)\hookrightarrow W_{n-1}$.

\begin{lemma}\label{lemma:injective}
The map $\Phi$ is injective.
\end{lemma}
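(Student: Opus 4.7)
The approach is a contradiction argument combining the generalized Cochran-Orr-Teichner bordism theorem (Theorem~\ref{theorem:cochran-harvey-leidy-theorem}) with the non-degeneracy of the classical Blanchfield form.

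First, since $\E_1$ is the figure-eight knot with irreducible Alexander polynomial $-t + 3 - t^{-1}$ over $\Q$, the Alexander module $A := H_1(M(\E_1);\Q[t^{\pm1}])$ is a simple $\Q[t^{\pm1}]$-module. Consequently $\Phi$ is either injective or identically zero, and it suffices to rule out the latter.

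Suppose for contradiction that $\Phi \equiv 0$. Writing $H_1(M(K);\Q[t^{\pm1}]) = A \oplus A$ for the standard connected-sum splitting of $K = \E_1 \# \E_1$, the map $\Phi$ factors through the left summand, so the entire left copy of $A$ lies in the kernel $P$ of the inclusion-induced map $H_1(\d W_{n-1};\Q[t^{\pm1}]) \to H_1(W_{n-1};\Q[t^{\pm1}])$. I will then apply Theorem~\ref{theorem:cochran-harvey-leidy-theorem} with coefficient ring $R = \Q$, PTFA group $G = \Z$, and $\phi$ the abelianization $\pi_1(W_{n-1}) \to \Z$. The hypotheses are satisfied: each boundary component of $W_{n-1}$ is a zero-framed surgery on a knot (hence has $\b_1 = 1$), $\phi$ is non-trivial on each such component since meridians generate $\Z$, and $W_{n-1}$ is a $\Q$-coefficient $(1)$-bordism (in fact a $(k)$-bordism for $1 \le k \le n$) by Lemma~5.7 of~\cite{cha2010amenable}, as cited in the paragraph preceding Theorem~\ref{theorem:cochran-harvey-leidy-theorem}. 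The theorem then yields $P \subset P^{\perp}$ with respect to the classical Blanchfield form on $\d W_{n-1}$.

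To close the argument I combine three standard orthogonality facts: (i) the Blanchfield form on $\d W_{n-1}$ decomposes as the orthogonal sum of the Blanchfield forms on its individual connected components; (ii) the Blanchfield form on $M(\E_1 \# \E_1)$ splits as the orthogonal sum of two copies of $\Bl_{\E_1}$ on $A$ (a classical Mayer--Vietoris computation for zero-surgery on connected sums); and (iii) $\Bl_{\E_1}$ is non-degenerate on $A$. Combining these with the inclusion $\text{left } A \subset P \subset P^{\perp}$ forces $\Bl_{\E_1}(x,y) = 0$ for every pair $x,y$ in the left copy of $A$, which means $\Bl_{\E_1}$ vanishes identically on $A$. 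Since $A \ne 0$, this contradicts non-degeneracy and completes the argument, so $\Phi$ must be injective. The main obstacle in this plan is verifying the bordism hypothesis on $W_{n-1}$ cleanly, but this is precisely where the careful construction of the building blocks $V$, $V^j$, $E_k$ in Subsection~\ref{subsection:proof-of-theorem-A} pays off, as each piece contributes the expected Alexander-module-level structure encapsulated in~\cite{cha2010amenable}.
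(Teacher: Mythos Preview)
Your argument is correct and follows essentially the same route as the paper: use that $W_{n-1}$ is a $(1)$-bordism to invoke Theorem~\ref{theorem:cochran-harvey-leidy-theorem}, combine the resulting self-orthogonality of the kernel with the orthogonal splitting of $\Bl_K$ into two copies of $\Bl_{\E_1}$, and conclude via simplicity of the Alexander module of $\E_1$ together with nonsingularity of $\Bl_{\E_1}$ (the paper cites Theorem~\ref{lemma:nonsingularity} for this last step). One cosmetic point: $\E_1$ is not literally the figure-eight knot (it is the genus-two knot of Figure~\ref{figure:E_m} with $m=1$), but it does have the same Alexander polynomial $t-3+t^{-1}$, which is all your argument uses.
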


\begin{proof}
Note that $W_{n-1}$ is a $\Z$-coefficient $1$-bordism and the abelianization map $\pi_1(W_{n-1})\to \Z$ is nontrivial.
Theorem~\ref{theorem:cochran-harvey-leidy-theorem} implies that the Blanchfield form on $H_1(M(K),\Q[t^{\pm1}])$ vanishes over the kernel of $\iota_*$.
Also, the Blanchfield form on the Alexander module of $K$ splits as sum of two copies of the Blanchfield form on the Alexander module of $\mathbb{E}_1$.
Hence the Blanchfield form on $H_1(M(\mathbb{E}_1),\Q[t^{\pm1}])$ vanishes over the kernel of~$\Phi$.

Recall that the Alexander module of $\mathbb{E}_1$ is
\[H_1(M(\mathbb{E}_1),\Q[t^{\pm1}])\cong\frac{\Q[t^{\pm1}]}{\langle t-3+1/t\rangle},\]
which is a simple $\Q[t^{\pm1}]$-module (e.g., see \cite[Example~4.10]{cochran2011primary}). 
% since $t-3+1/t$ is irreducible in $\Q[t^{\pm1}]$.
Hence the kernel $P$ of $\Phi$ is either zero or the entire $H_1(M(\mathbb{E}_1),\Q[t^{\pm1}])$.

Note that $\Q[t^{\pm1}]=\Z[t^{\pm1}]{(\Z-\{0\})^{-1}}$ and the natural involution of $\Z[t^{\pm1}]$ acts trivially on $\Z-\{0\}$.
Hence, by Theorem~\ref{lemma:nonsingularity}, $H_1(M(\mathbb{E}_1),\Q[t^{\pm1}])$ is nonsingular.
Since that $P$ is $H_1(M(\mathbb{E}_1),\Q[t^{\pm1}])$ implies the Blanchfield form on $H_1(M(\mathbb{E}_1),\Q[t^{\pm1}])$ vanishes, $P$ must be trivial.
That is, $\Phi$ is injective.
\end{proof}

Recall that $\mu_{n-1}$ and $\ol{\mu}_{n-1}$ are identified with the curve $\a$ and $\b\subset M(K)$ in $W_{n-1}$, respectively.
It can be shown that $\a$ and $\b$ are nontrivial in $H_1(M(\mathbb{E}_1),\Q[t^{\pm1}])$.
By Lemma~\ref{lemma:injective}, $\mu_{n-1}$ and $ \ol{\mu_{n-1}}$ represent nontrivial elements in $H_1(W_{n-1},\Q[t^{\pm1}])$.

Since $\S_2\pi_1(W_{n-1})$ is defined as the kernel of the map
\[\S_{1}\pi_1(W_{n-1}) \to \frac{\S_{1}\pi_1(W_{n-1})}{[\S_{1}\pi_1(W_{n-1}),\S_{1}\pi_1(W_{n-1})]}\otimes_{\Z} \Q\cong H_1(W_{n-1},\Q[t^{\pm1}]),\]
there is a natural injective map
\[\frac{\S_{1}\pi_1(W_{n-1})}{\S_{2}\pi_1(W_{n-1})}\hookrightarrow H_1(W_{n-1},\Q[t^{\pm1}]).\]

Hence $\mu_{n-1}$ and $\ol{\mu}_{n-1}$ represent nontrivial elements in $\S_1\pi_1(W_{n-1})/\S_2\pi_1(W_{n-1})$.
%induces an injective map from $H_1(M(\mathbb{E}_1),\Q[t^{\pm1}])$ to $H_1(W_{n-1},\Q[t^{\pm1}])$, both $\a$ and $\b$ represent nontrivial elements in $H_1(W_{n-1},\Q[t^{\pm1}])$.
%\cong\frac{(\pi_1(W_{n-1}))^{(1)}_r}{(\pi_1(W_{n-1}))^{(2)}_r}\cong\frac{\S_1\pi_1(W_{n-1})}{\S_2\pi_1(W_{n-1})},$$
%both $\mu_{n-1}$ and $\ol{\mu}_{n-1}$ are nontrivial in $\pi_1(W_{n-1})/\S_2\pi_1(W_{n-1})$.

For $k\leq n-2$, suppose $\vp_{k+1}(\mu_{k+1})$ is nontrivial.
Denote $\S_{n-k}\pi_1(W_{k+1})$ by $\Lambda$ temporarily.
%Note that $\Lambda/[\Lambda,\Lambda]$ has the $\Z[\pi_1(W_{k+1})/\Lambda]$-module structure induced from the conjugation action of $\pi_1(W_{k+1})/\Lambda$ on $\Lambda/[\Lambda,\Lambda]$.
%\bl{Then for any flat $\Z[\pi_1(W_{k+1})/\Lambda]$-module $\mathcal{R}$, $H_1(W_{k+1},\mathcal{R})$ is isomorphic to
%\[\frac{\Lambda}{[\Lambda,\Lambda]}\otimes_{\Z[\pi_1(W_{k+1})/\Lambda]}\mathcal{R}.\]
%}
Let $\mathcal{R}$ be $\Z[\pi_1(W_{n-1})/\Lambda]S^{-1}$ if $k=n-2$, $\Q[\pi_1(W_{k+1})/\Lambda]$ if $0<k<n-2$, and $\Z_d[\pi_1(W_{1})/\Lambda]$ if $k=0$.
Then $\S_{n-k+1}\pi_1(W_{k+1})$ is the kernel~of 
\[\Lambda=\S_{n-k}\pi_1(W_{k+1}) \to \frac{\Lambda}{[\Lambda,\Lambda]}\otimes_{\Z[\pi_1(W_{k+1})/\Lambda]} \R\hookrightarrow H_1(W_{k+1},\mathcal{R}).\]
Hence there is an injection
\begin{equation}\label{eq3}
\frac{\S_{n-k}\pi_1(W_{k+1})}{\S_{n-k+1}\pi_1(W_{k+1})}\hookrightarrow H_1(W_{k+1},\mathcal{R}).
\end{equation}

We need the following lemma:
\begin{lemma}\label{lemma:isomorphism}
The inclusion $W_{k+1}\hookrightarrow W_k$ induces an isomorphism
\[\frac{\pi_1(W_{k+1})}{\pi_1(W_{k+1})^{(n-k+1)}}\cong \frac{\pi_1(W_k)}{\pi_1(W_k)^{(n-k+1)}}.\]
As a consequence,
\[\frac{\S_{n-k}\pi_1(W_{k+1})}{\S_{n-k+1}\pi_1(W_{k+1})}\cong \frac{\S_{n-k}\pi_1(W_k)}{\S_{n-k+1}\pi_1(W_k)}.\]
\end{lemma}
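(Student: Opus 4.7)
The plan is to apply Seifert--van Kampen to the decomposition
\[W_k = E_k \cup_{M(J_1^{k+1})} W_{k+1} \cup_{M(\overline{J_1^{k+1}})} (-E_k),\]
and to show that gluing on the pieces $\pm E_k$ does not change $\pi_1$ modulo the $(n-k+1)$-th derived subgroup. Once the first isomorphism in the statement is proved, the second follows by naturality: each $\S_j\pi$ for $j\le n-k+1$ contains $\pi^{(n-k+1)}$ and is constructed functorially from the quotient $\pi/\pi^{(n-k+1)}$ together with the fixed localizations and torsion coefficient $\Z_d$ used in the definition of the series.

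The central technical step is to prove, for the infection cobordism $E_k$ from $M(J_1^{k+1})$ to $M(K^{k+1})\sqcup M(J_1^k)$, that the inclusion $M(J_1^{k+1})\hookrightarrow E_k$ induces an isomorphism on $\pi_1/\pi_1^{(j)}$ for every $j\ge 0$; the same then holds for $-E_k$ by symmetry. Van Kampen on the construction $E_k = M(K^{k+1})\times I \cup -M(J_1^k)\times I / {\sim}$ (glued along a solid torus) yields
\[\pi_1(E_k) = \pi_1(M(K^{k+1})) *_{\Z} \pi_1(M(J_1^k)),\]
where the amalgamating $\Z$ is the core of the glued solid torus, identified with $\eta_{k+1}$ on the $K^{k+1}$-side and with the zero-framed longitude $\lambda_{J_1^k}$ on the $J_1^k$-side. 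Comparing this with the analogous amalgamated presentation of $\pi_1(M(J_1^{k+1}))$ arising from the infection $J_1^{k+1} = K^{k+1}_{\eta_{k+1}}(J_1^k)$, and using that $\lambda_{J_1^k}$ is null-homologous in $M(J_1^k)$ and hence lies in $\pi_1(M(J_1^k))^{(1)}$, the claim will follow by induction on $j$, along the lines of the arguments in Section~5 of \cite{cha2010amenable}.

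Given this, Seifert--van Kampen applied to $W_k$ produces
\[\pi_1(W_k) = \pi_1(E_k) *_{\pi_1(M(J_1^{k+1}))} \pi_1(W_{k+1}) *_{\pi_1(M(\overline{J_1^{k+1}}))} \pi_1(-E_k),\]
and modulo $\pi_1^{(n-k+1)}$ both amalgamating inclusions become isomorphisms by the previous step, so the free product collapses to $\pi_1(W_{k+1})/\pi_1(W_{k+1})^{(n-k+1)}$, giving the first isomorphism. The main obstacle will be the infection-cobordism step on $E_k$: one must argue that every element of $\pi_1(E_k)$ is congruent, modulo the appropriate derived subgroup, to an element of $\pi_1(M(J_1^{k+1}))$. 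The crucial input is that $\lambda_{J_1^k}$ is a commutator in $\pi_1(M(J_1^k))$, so the extra amalgamating generator in $\pi_1(E_k)$ can be pushed arbitrarily deep into the derived series by iterating the van Kampen presentation.
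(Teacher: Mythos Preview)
Your Seifert--van Kampen strategy is the right framework and matches the standard argument the paper defers to (equation~(7.18) of \cite{cochran2011primary}), but two points in your execution fail. First, in $\pi_1(E_k)=\pi_1(M(K^{k+1}))*_\Z\pi_1(M(J_1^k))$ the core of the surgery solid torus in $M(J_1^k)$ is isotopic to the \emph{meridian} $\mu_{J_1^k}$, not the longitude; the longitude bounds the meridian disk of that solid torus and is already trivial in $\pi_1(M(J_1^k))$. With the correct identification one finds that $\pi_1(M(J_1^{k+1}))\to\pi_1(E_k)$ is surjective with kernel normally generated by $\lambda_{J_1^k}$ (equivalently, by the meridian of $\eta_{k+1}$), and hence $\pi_1(W_{k+1})\to\pi_1(W_k)$ is surjective with kernel $\langle\langle \lambda_{J_1^k},\lambda_{\overline{J_1^k}}\rangle\rangle$.

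Second, and this is the genuine gap: your claim that $M(J_1^{k+1})\hookrightarrow E_k$ induces an isomorphism on $\pi_1/\pi_1^{(j)}$ for \emph{every} $j$ is false. Inside $\pi_1(M(J_1^{k+1}))$ one only gets $\lambda_{J_1^k}\in\pi_1(X(J_1^k))^{(1)}\subset\pi_1(M(J_1^{k+1}))^{(2)}$, and it cannot be pushed deeper, since $\eta_{k+1}$ generates the Alexander module of $J_1^{k+1}$ and hence $\pi_1(X(J_1^k))\not\subset\pi_1(M(J_1^{k+1}))^{(2)}$. The argument that actually works takes place in $W_{k+1}$, not in $E_k$: since $\mu_{k+1}\in\pi_1(W_{k+1})^{(n-k-1)}$ (already established earlier in the proof), the whole image of $\pi_1(M(J_1^{k+1}))$ lies in $\pi_1(W_{k+1})^{(n-k-1)}$, so the image of $\pi_1(X(J_1^k))$ lies in $\pi_1(W_{k+1})^{(n-k)}$, and therefore $\lambda_{J_1^k}\in\pi_1(W_{k+1})^{(n-k+1)}$. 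That is what kills the kernel modulo the $(n{-}k{+}1)$-th derived subgroup; the required depth comes from the position of $\mu_{k+1}$ in the ambient $W_{k+1}$, not from iterating anything inside~$E_k$.
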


\begin{proof}
The proof is essentially the same with the proof of (7.18) in \cite{cochran2011primary}.
\end{proof}
Hence we have an inclusion from $\S_{n-k}\pi_1(W_k)/\S_{n-k+1}\pi_1(W_k)$ to $H_1(W_{k+1},\mathcal{R})$.

Suppose $\mu_k$ represents a trivial element in $\S_{n-k}\pi_1(W_k)/\S_{n-k+1}\pi_1(W_k)$.
Then $\mu_k=\eta_{k+1}$ vanishes in $H_1(W_{k+1},\mathcal{R})$.
%(Even though $\eta_{k+1}$ is a curve on $M(K^{k+1})$, it can be isotoped into $W_{k+1}$.
%By abuse of notation, we call the isotoped curve as $\eta_{k+1}$ too.)
%That is, $\eta_{k+1}$ is in the kernel of the inclusion-induced map
%\[H_1(M(J_1^{k+1}),\mathcal{R})\to H_1(W_{k+1},\mathcal{R}).  \]
By the inductive hypothesis, $\vp_{k+1}$ is nontrivial.
Hence, by Theorem~\ref{theorem:cochran-harvey-leidy-theorem}, $\Bl([\eta_{k+1}],[\eta_{k+1}])$, for $\Bl$ the Blanchfield form on $H_1(M(J_1^{k+1}),\mathcal{R})$, vanishes.

One can show that $H_1(M(J^{k+1}_1),\mathcal{R})$ is isomorphic to $H_1(M(K^{k+1}),\Z[t^{\pm1}])\otimes_{\Z[t^{\pm1}]}\mathcal{R}$, considering $\R$ as a $\Z[t^{\pm1}]$-module by the map $t\in\langle t\rangle \mapsto [\mu_{k+1}]\in\R$ (see \cite[Theorem~4.7]{leidy2006higher} for example).
Since $\eta_{k+1}$ generates $H_1(M(K^{k+1}),\Z[t^{\pm1}])$ as a $\Z[t^{\pm1}]$-module, $\eta_{k+1}$ generates $H_1(M(J^{k+1}_1),\mathcal{R})$ as an $\mathcal{R}$-module as well.
Theorem~\ref{lemma:nonsingularity} and the fact that the Blanchfield form on $H_1(M(J^{k+1}_1),\mathcal{R})$ vanishes assures that $H_1(M(J^{k+1}_1),\mathcal{R})$ is a zero module.

Note that $H_1(M(K^{k+1}),\mathcal{R})$ is isomorphic to
\[\frac{\Z[t^{\pm1}]}{\langle p(t)\rangle}\otimes_{\Z[t^{\pm1}]}\mathcal{R},\]
where $p(t)=2t-5+2/t$ is the Alexander polynomial of $K^{k+1}$.
Possibly except when $k=n-2$, this module is not a zero module.
We show that  $H_1(M(K^{n-1}),\mathcal{R})$ is not a zero module as well.

Suppose not.
We temporarily denote $\pi_1(W_{n-1})$ as $\Lambda$ for simplicity.
Then
\[\frac{\Z[t^{\pm1}]}{\langle p(t)\rangle}\otimes_{\Z[t^{\pm1}]}\mathcal{R}\cong \frac{\Z[\Lambda/\S_2\Lambda]}{\langle p(\a)\rangle}S^{-1}=0. \]
This implies that the unity $\ol{1\cdot e}$ represents 0 in this module where $e$ is the identity element of the group $\Lambda/\S_2\Lambda$.
Then there is $s\in S$ such that
$$1\cdot s=p(\a)\cdot f\in\Z[\Lambda/\S_2\Lambda],$$
where $f\in \Z[\Lambda/\S_2\Lambda]$.
Since $p(\a)$ and $s$ are in $\Z[\S_1\Lambda/\S_2\Lambda]$, $f$ also lies in $\Z[\S_1\Lambda/\S_2\Lambda]$.
Since $\S_1\Lambda/\S_2\Lambda$ is a torsion-free abelian group containing $\a$ and $\b$, there is a finitely generated free abelian subgroup $H$ of $\S_1\Lambda/\S_2\Lambda$ which contains $\a$ and $\b$.
Choose a basis $\{x, x_2,\dots,x_r\}$ of $H$ in which $\a=x^m$ for some $m>0$.
Then there are $m_i,n_{i,j}\in\Z$ such that $\mu^i\b\mu^{-i}=x^{m_i}x^{n_{i,2}}_2\dots x^{n_{i,r}}_r$ for all $i\in\Z$.

We can regard $\Z[H]$ as a Laurent polynomial ring in the variables $\{x,x_2,\dots,x_r\}$.
Since $p(t)$ is neither zero nor a unit, there exists a non-zero complex root $\tau$ of $p(x^m)$.
Let $\tilde{p}(x)$ be the irreducible factor of $p(x^m)$ of which $\tau$ is a root.
Since $s$ is a product of elements of the form $p(\mu^i\b\mu^{-i})$ and $p(\a)$ divides $s$ in $\Z[H]$ by the assumption, $\tilde{p}(x)$ must divide $p(x^{m_i}x^{n_{i,2}}_2\dots x^{n_{i,r}}_r)$ in $\mathbb{C}[H]$ for some $i$.
Then $p(\tau^{m_i}x^{n_{i,2}}_2\dots x^{n_{i,r}}_r)$ must vanish for every complex value of $x_2,\dots,x_r$.
This implies that $n_{i,j}$ is 0 for all $j$. Hence, we get $\mu^i\b\mu^{-i}=x^{m_i}$.

Note that $m_i\neq 0$ since $\b$ is nontrivial. Thus,
\[\mu^i\b^m\mu^{-i}=\a^{m_i}\]
for some $i,m$, and $m_i$.
This equation holds in $\S_1\Lambda/\S_2\Lambda$ too since $\Z[\S_1\Lambda/\S_2\Lambda]$ is a free $\Z$-module.
Since there is an injection 
\[\frac{\S_1\Lambda}{\S_2\Lambda}\hookrightarrow \frac{\S_1\Lambda}{[\S_1\Lambda,\S_1\Lambda]}\otimes\Q=H_1(W_{n-1},\Q[t^{\pm1}]), \]
$\mu^i\b^m\mu^{-i}=\a^{m_i}$ also holds in $H_1(W_{n-1},\Q[t^{\pm1}])$.
Note that $\mu$, $\a$, and $\b$ are in the image of the injective map  $\Phi$
$$H_1(M(\mathbb{E}_1),\Q[t^{\pm1}])\hookrightarrow H_1(W_{n-1},\Q[t^{\pm1}])$$ 
and hence the equation $\mu^i\b^m\mu^{-i}=\a^{m_i}$ remains valid in $H_1(M(\mathbb{E}_1),\Q[t^{\pm1}])$.

On the other hand, according to the Lemma~7.8 of \cite{cochran20112-torsion}, the intersection of subgroups $\langle \mu^i \b\mu^{-i}\rangle$ and $\langle \a\rangle$ of $H_1(M(\mathbb{E}_1),\Q[t^{\pm1}])$ is trivial for every $i$.
This contradiction shows that $H_1(M(J_1^{n-1}),\mathcal{R})$ is not trivial, and then completes the proof of $\mu_k$ part of Lemma~\ref{Lemma:nontriviality}.

Next we prove that $\vp_k(\ol{\mu}_k)$ is trivial for $k\le n-2$.
%First we prove the case when $k=n-2$.
Denote $\pi_1(W_{n-1})/\S_2\pi_1(W_{n-1})$ as $\Lambda$ for simplicity.
As we saw in (\ref{eq3}), there is an injective map
\[\frac{\S_2\pi_1(W_{n-1})}{\S_3\pi_1(W_{n-1})}\hookrightarrow H_1(W_{n-1},\Z[\Lambda]S^{-1}).\]

%Note that $\ol{\mu}_{n-2}$ is isotopic to $\ol{\eta}_{n-1}$ in $W_{n-2}$.
Hence to show $\vp_{n-2}(\ol{\mu}_{n-2})$ vanishes, it is enough to prove that $\ol{\mu}_{n-2}=\ol{\eta}_{n-1}$ vanishes in $H_1(W_{n-1},\Z[\Lambda]S^{-1})$.
%After isotoping $\ol{\eta}_{n-1}$ into $W_{n-1}$, 

Note that $\ol{\eta}_{n-1}\in H_1(W_{n-1},\Z[\Lambda]S^{-1})$ lies in the image of inclusion-induced map from $H_1(M(\ol{K^{n-1}}),\Z[\Lambda]S^{-1})$, which is isomorphic to ${\Z[\Lambda]}/{\langle p(\b)\rangle}S^{-1}$.
Since $p(\b)$ lies in $S$, this module is trivial.
Hence $\ol{\eta}_{n-1}$ represents a trivial element in $H_1(W_{n-1},\Z[\Lambda]S^{-1})$ and this finishes the proof that $\ol{\mu}_{n-2}$ maps to $0$ by~$\vp_{n-2}$.

For $k<n-2$, suppose that $\ol{\mu}_{k+1}$ is mapped to $0$ by $\vp_{k+1}$.
%That is, the curve $\ol{\mu}_{k+1}$ represents a class in $\S_{n-k}\pi_1(W_{k+1})$.
Let $\langle\ol{\mu}_{k+1}\rangle$ be the subgroup of $\S_{n-k}\pi_1(W_{k+1})$ normally generated by~$\ol{\mu}_{k+1}$.
The curve $\eta_{k+1}$, considered to be in $M(\ol{J^{k+1}_1})\subset W_{k+1}$, represents an element in $\langle\ol{\mu}_{k+1}\rangle^{(1)}$.
Note that $\langle\ol{\mu}_{k+1}\rangle^{(1)}\subset S_{n-k+1}\pi_1(W_{k+1})$, and there is an inclusion-induced map $S_{n-k+1}\pi_1(W_{k+1})\to S_{n-k+1}\pi_1(W_{k})$.
Since $\ol{\mu}_k$ is isotopic to $\ol{\eta}_{k+1}$, $\ol{\mu}_k$ lies in $S_{n-k+1}\pi_1(W_{k})$, and by induction this finishes the proof.

\subsection{Proof of Theorem \ref{theorem:maintheorem3}}
In \cite{cha2010amenable}, it is shown that the classes of $(n)$-solvable knots with vanishing PTFA $L^2$-signature obstructions form a subgroup $\mathcal{V}_n$ of $\C$ such that $\F_{n.5}\le\mathcal{V}_n\le\F_n$.

By the Subsection~\ref{subsection:vanishing-PTFA-condition}, knots $J_i$ are $(n)$-solvable with vanishing PTFA $L^2$-signature obstructions.
Also by the Subsection~\ref{subsection:proof-of-theorem-A}, any nontrivial finite connected sum of $J_i$'s is not $(n.5)$-solvable.
Hence $J_i$'s form a $\Z_2^\infty$ subgroup in $\mathcal{V}_n/\F_{n.5}$.
On the other hand, the proof of Cochran, Harvey, and Leidy about non-$(n.5)$-solvability actually shows that their knots are not $(n)$-solvable with vanishing PTFA $L^2$-signature obstructions.
That is, they form a $\Z_2^\infty$ subgroup in $\F_n/\mathcal{V}_n$.

Hence the subgroup of $\F_n/\F_{n.5}$ generated by our knots $J_i$ and that of Cochran, Harvey, Leidy's 2-torsion knots have the trivial intersection.

\vskip 5mm

The remainder of this paper is dedicated to the statement and proof of Theorem~\ref{theorem:maintheorem2}.

\section{Cochran-Harvey-Leidy's $(h,\P)$-solvability of knots}\label{section:hp-solvability}

For $n\ge2$, let $\P$ be an $n$-tuple $(p_1(t),p_2(t),\dots,p_n(t))$ of Laurent polynomials over $\Z$.
In this section we briefly introduce $(h,\P)$-solvability of knots and a new filtration $\{\F_h^\P\}$ on the knot concordance group $\C$ defined and studied by Cochran, Harvey, and Leidy.
For more details, consult with \cite[Section~2]{cochran2011primary}.

\begin{definition}
Two Laurent polynomials $p(t)$ and $q(t)$ over $\Z$ are said to be \emph{strongly coprime}, and denoted by $\wt{(p,q)}=1$, if $p(t^m)$ and $q(t^n)$ are coprime in $\Z[t^{\pm1}]$ for all integers $m$ and~$n$.
\end{definition}

\begin{example}\label{example:strongly-coprime}
Consider the following set of polynomials
\[\{(kt-(k+1))((k+1)t-k)|k\in\Z^+\}.\]
It is shown that any two distinct polynomials in this set are strongly coprime \cite[Example~ 4.10]{cochran2011primary} .
\end{example}

\begin{definition}\label{defn:derived-series-localized}
Let $G$ be a group such that $G/G^{(1)}$ is isomorphic to $\Z$. 
The \emph{derived series localized at $\mathcal{P}=(p_1(t),p_2(t),\dots,p_n(t))$}, $\{G^{(i)}_\P\}^{n+1}_{i=0}$, is defined as follows:
\begin{enumerate}
\item $G^{(0)}_\mathcal{P}= G$ and $G^{(1)}_\mathcal{P}=G^{(1)}$.
\item
Let $\mu$ be a generator of $G/G^{(1)}_\P=\Z$ and $S_{p_n}^*$ a multiplicative subset of $\Z[G/G^{(1)}_\P]$ 
$$\{q_1(\mu^{\pm1})\dots q_r(\mu^{\pm1})|q_j(t)\in\Z[t^{\pm1}], (p_n,q_j)=1, q_j(1)\neq 0\}.$$
%By Proposition~\ref{proposition:right-divisor-set} $S_{p_n}^*$ is a right divisor set of $\Z[G/G^{(1)}_\P]$ so  $\Z[G/G^{(1)}_\mathcal{P}]$ can be localized by $S^{*}_{p_n}$.
Then $G^{(2)}_\mathcal{P}$ is defined as the kernel of the map
\[ G^{(1)}_\mathcal{P}\to\frac{G^{(1)}_\mathcal{P}}{[G^{(1)}_\mathcal{P},G^{(1)}_\mathcal{P}]}\otimes_{\Z[G/G^{(1)}_\mathcal{P}]}\Z[G/G^{(1)}_\mathcal{P}](S^{*}_{p_n})^{-1}.\]

\item
For any $2\leq i\leq n$, suppose that $G_\P^{(i)}$ is defined. Then let
\[S_i=\{q_1(a_1)\dots q_r(a_r)|q_j(t)\in\Z[t^{\pm1}], \wt{(p_{n-i+1},q_j)}=1,q_j(1)\neq0,a_j\in G^{(i-1)}_\mathcal{P}/G^{(i)}_\mathcal{P}\}.\]
By Proposition~\ref{proposition:right-divisor-set}, $S_i$ is a right divisor set of $\Z[G/G^{(i)}_\mathcal{P}]$.
Let $G^{(i+1)}_\mathcal{P}$ be the kernel of the~map
\[G^{(i)}_\mathcal{P}\to\frac{G^{(i)}_\mathcal{P}}{[G^{(i)}_\mathcal{P},G^{(i)}_\mathcal{P}]}\otimes_{\Z[G/G^{(i)}_\mathcal{P}]}\Z[G/G^{(i)}_\mathcal{P}]S_{i}^{-1}.\]

\end{enumerate}
\end{definition}

Now we are ready to define $(h,\P)$-solvability of knots for any half integer $0\leq h\le n+1$.

\begin{definition}\cite[Definition~2.3]{cochran2011primary}
For an integer $0\leq k \leq n+1$, a knot $K$ is called \emph{$(k,\P)$-solvable} if its zero framed surgery manifold $M(K)$ bounds a compact smooth $4$-manifold $W$ such that
\begin{enumerate}
\item  the inclusion-induced map $H_1(M(K),\Z)\to H_1(W,\Z)$ is an isomorphism,
\item $H_2(W,\Z)$ has a basis consisting of connected compact oriented surfaces, $\{L_i,D_i\}$, embedded in $W$ with trivial normal bundles, wherein the surfaces are pairwise disjoint except that, for each $i$, $L_i$ intersects $D_i$ transversely once with positive sign, and,
\item for each $i$, $\pi_1(L_i)\subset \pi_1(W)^{(k)}_\P$ and $\pi_1(D_i)\subset \pi_1(W)^{(k)}_\P$.
\end{enumerate}

A knot $K$ is $(k.5,\P)$-solvable if in addition, for each $i$, $\pi_1(L_i)\subset\pi_1(W)^{(k+1)}_\P$.

If a knot $K$ is $(h,\P)$-solvable via a $4$-manifold $W$, then $W$ is called a \emph{$(h,\P)$-solution for~$K$}.
\end{definition}

The only difference between the ordinary $(h)$-solvability and $(h,\P)$-solvability is on the use of distinct normal series at the property 3 and 4. 
The former uses the ordinary derived series and the latter uses the derived series localized at $\P$ \cite[Proposition~2.5]{cochran2011primary}.

Cochran, Harvey, and Leidy proved that a knot concordant to an $(h,\P)$-solvable knot is also $(h,\P)$-solvable, and the subset $\F^\P_h\subset\mathcal{C}$ of classes of $(h,\P)$-solvable knots is a subgroup.
Hence they form a filtration on~$\mathcal{C}$:
\[\F^\P_{n+1}\leq\F^\P_{n.5}\leq\dots\leq\F^\P_1\leq\F^\P_{0.5}\leq\F^\P_0\leq\mathcal{C}.\]

%Cochran, Harvey, and Leidy prove the following:

%\begin{proposition}[Section 2 of \cite{cochran2011primary}]\label{proposition:weakly-functorial-commutator-series-generates-solvable-series}
%A knot concordant to an $(h,\P)$-solvable knot is also $(h,\P)$-solvable.
%The subset $\F^\P_h\subset\mathcal{C}$ of classes of $(h,\P)$-solvable knots forms a subgroup.
%Hence they form a filtration on~$\mathcal{C}$:
%\[\F^\P_{n+1}\leq\F^\P_{n.5}\leq\dots\leq\F^\P_1\leq\F^\P_{0.5}\leq\F^\P_0\leq\mathcal{C}\]
%\end{proposition}

Since $G^{(i)}_\P$ contains $G^{(i)}$ for any group $G$ and $i$, $\F_h$ is contained in $\F^\P_h$.
This provides the following quotient map:
\[\phi_\mathcal{P}\colon\frac{\F_n}{\F_{n.5}}\to\frac{\F_n}{\F_{n.5}^\mathcal{P}\cap\F_n}.\]

In this setting, the main result of \cite{cochran20112-torsion} can be summarized in the following form:
\begin{theorem}[Theorem 5.5 of \cite{cochran20112-torsion}]\label{theorem:cochran-harvey-leidy-2-torsion}
Let $\P=(p_1(t),p_2(t),\dots,p_n(t))$ be an $n$-tuple such~that
\begin{enumerate}
\item for $k<n$, $p_k(t)=\delta(t)^2$ for some nonzero nonunit $\delta(t)\in\Z[t^{\pm1}]$ with $\delta(1)=\pm1$ and $\delta(t)=\delta(t^{-1})$ up to the multiplication by $t^i$,
\item $p_n(t)=m^2t^2-(2m^2+1)t+m^2$ for some nonzero integer $m$.
\end{enumerate}
Then there is a negatively amphichiral $(n)$-solvable knot which
\begin{enumerate}
\item is mapped injectively by $\phi_\P$, but
\item is mapped trivially by $\phi_\mathcal{Q}$ for every $n$-tuple $\mathcal{Q}$ strongly coprime to~$\P$.
\end{enumerate}
\end{theorem}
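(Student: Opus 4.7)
\medskip

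The plan is to construct the desired negatively amphichiral $(n)$-solvable knot as an iterated infection $J^n$ built out of carefully chosen seed knots and infection knots, modelled on the construction in Section~\ref{section:construction}. At the $k$th stage of the iteration (for $k=1,\dots,n-1$) I would use a ribbon seed knot $K^k$ whose Alexander polynomial is $p_{n-k+1}(t)=\delta(t)^2$; such a $K^k$ can be produced by a standard ribbon move so that the Alexander module is cyclic with annihilator $\langle\delta(t)^2\rangle$ and is generated by an infection axis $\eta_k$ lying in $\pi_1(X(K^k))^{(1)}$. At the bottom level take $K$ to be a connected sum $\mathbb{E}_m\#\mathbb{E}_m$ chosen so that the Alexander polynomial of $\mathbb{E}_m$ equals $p_n(t)=m^2t^2-(2m^2+1)t+m^2$ (the knots of \cite{cochran20112-torsion} have exactly this property), with axes $\alpha,\beta$ as in Figure~\ref{figure:K}. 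Finally take the basic infection knot $J^0$ to be a knot with large Levine--Tristram signature integral at an appropriate prime $d$-th root of unity, then form $J^n$ exactly as in Subsection~\ref{subsection:iterated-infection}. The construction is negatively amphichiral by the symmetry of the building blocks, so $J^n$ represents an element of order at most~$2$ in $\C$, and Proposition~\ref{prop:satellite-solvable} shows $J^n\in\F_n$.

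To prove part~(1), injectivity under $\phi_\P$, I would assume for contradiction that some nonzero finite connected sum $J=J_{i_1}\#\cdots\#J_{i_r}$ of copies built with distinct bottom-level infection knots $J^0_{i_j}$ is $(n.5,\P)$-solvable, and assemble the standard cobordism tower $W_0$ as in Subsection~\ref{subsection:proof-of-theorem-A}, glued to a purported $(n.5,\P)$-solution $V$ for $J$. Compute $S(W_0,\vp)$ in two ways: the first, via Novikov additivity, vanishes using the homology-cobordism vanishing theorem for $\rho$-invariants over derived series localized at $\P$ (the analogue of Theorem~\ref{thm:homology-cobordism-invariance-on-amenable}, proved by Cochran--Harvey--Leidy), together with the usual bordism lemma for infection cobordisms and the vanishing signature integrals arranged in Proposition~\ref{prop:knot-of-cha}. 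The second computation, via the von Neumann index theorem, reduces to a sum of bounded $\rho$-invariants of the seed pieces plus $\sum_{r=0}^{d-1}\sigma_{J^0_1}(e^{2\pi r\sqrt{-1}/d})$, which can be made strictly larger than the bound by the choice of $J^0_1$ provided the nontriviality analogue of Lemma~\ref{Lemma:nontriviality} holds for the $\P$-localized derived series. This last nontriviality step is the analogue of the higher-order Blanchfield argument in Subsection~\ref{subsection:proof-of-theorem-A}, where the key point is that $p_k(t)=\delta(t)^2$ is the annihilator of the Alexander module of $K^{n-k+1}$, so after localization at $S_k$ (built from polynomials strongly coprime to $p_{n-k+1}$) this Alexander module survives and the infection axis represents a nonzero class.

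For part~(2), triviality under $\phi_\mathcal{Q}$ for each $\mathcal{Q}$ strongly coprime to $\P$, I would produce an explicit $(n.5,\mathcal{Q})$-solution for $J^n$ itself. Starting from the $(n)$-solution $V$ assembled from a slice disk exterior for $U^n$ together with the $(0)$-solutions for the $J^0$ and $\overline{J^0}$ pieces, the required upgrade from $(n,\mathcal{Q})$ to $(n.5,\mathcal{Q})$ amounts to showing that the Lagrangian half of $H_2(V)$ already has $\pi_1$-image inside the $\mathcal{Q}$-localized $(n+1)$st term $\pi_1(V)^{(n+1)}_\mathcal{Q}$. This follows from the strong coprimeness hypothesis: at each stage, the polynomial $q_j$ appearing in the localizing set $S_j$ for $\mathcal{Q}$ satisfies $\widetilde{(p_{n-j+1},q_j)}=1$, so the Alexander modules of the $K^k$ pieces (annihilated by $p_{n-k+1}$) are killed after tensoring with the $\mathcal{Q}$-localization, which in turn forces the relevant axis classes into the next stage of the $\mathcal{Q}$-derived series. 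Carrying this out stage by stage, exactly along the inductive scheme of Definition~\ref{defn:derived-series-localized}, gives the desired $(n.5,\mathcal{Q})$-solution. Because negatively amphichiral knots are their own inverses, the same argument applied to any finite connected sum shows that $\phi_\mathcal{Q}$ kills the entire subgroup generated by the $J_i$.

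The main obstacle is the second computation in part~(1), namely the proof that the meridians of the bottom infection knots survive in the $\P$-localized derived quotient of $\pi_1(W_0)$. This requires the right-divisor-set machinery of Proposition~\ref{proposition:right-divisor-set} adapted to the polynomials $p_k=\delta^2$, and the nonvanishing of the higher-order Blanchfield pairing on $H_1(M(K^k),\mathcal{R})$ after localization, which in turn rests on the fact that $\delta(t)$ does not divide any element of the localizing set (a consequence of strong coprimeness of $\delta$ with the polynomials appearing at other stages) together with Theorem~\ref{lemma:nonsingularity}. Once these Blanchfield-form nondegeneracy statements are in place, the two-computation contradiction for $S(W_0,\vp)$ runs in parallel with the proof of Theorem~\ref{theorem:maintheorem1}, and Theorem~\ref{theorem:maintheorem2} follows by combining parts~(1) and~(2) over the infinite family of strongly coprime tuples $\P\in S$ given in Example~\ref{example:strongly-coprime}.
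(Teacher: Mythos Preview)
The paper does not give its own proof of this statement: it is quoted verbatim as Theorem~5.5 of \cite{cochran20112-torsion} and then used, without proof, to derive Corollary~\ref{thm:chl-revised}. There is therefore no argument in the paper to compare your proposal against.

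That said, your sketch conflates two distinct methods. Cochran--Harvey--Leidy's original proof in \cite{cochran20112-torsion} uses \emph{PTFA} $\rho$-invariants: the bottom infection knot $J^0$ is chosen so that its Levine--Tristram signature \emph{integral} $\int_{S^1}\sigma_{J^0}(\omega)\,d\omega$ is large, and the vanishing input in the first computation of $S(W_0,\vp)$ is the PTFA obstruction theorem for $(n.5,\P)$-solvability (Theorem~\ref{thm:chl} here). What you describe instead---choosing $J^0$ with large signature \emph{sum at $d$-th roots of unity} and invoking an amenable-group vanishing theorem---is the method of the \emph{present} paper (Theorem~\ref{thm:refinedAST}, as deployed in the proof of Theorem~\ref{theorem:pricisemaintheorem2}), not of \cite{cochran20112-torsion}. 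The distinction matters: knots built your way have vanishing PTFA $L^2$-signature obstruction (cf.\ Subsection~\ref{subsection:vanishing-PTFA-condition}), whereas CHL's knots do not, and this is exactly what separates Theorems~\ref{theorem:maintheorem3} and~\ref{theorem:maintheorem2} from the cited result. Your phrase ``the analogue of Theorem~\ref{thm:homology-cobordism-invariance-on-amenable}, proved by Cochran--Harvey--Leidy'' is a misattribution; CHL proved the PTFA version, and the amenable extension is due to Cha (and, in the $(n.5,\P)$-setting, is Theorem~\ref{thm:refinedAST} of this paper).

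A smaller point: the statement concerns a \emph{single} negatively amphichiral knot, so part~(1) only requires that $J^n$ itself not be $(n.5,\P)$-solvable. Your setup with a general finite connected sum $J_{i_1}\#\cdots\#J_{i_r}$ over distinct bottom-level infection knots is the framework for the $\Z_2^\infty$ corollary, not for this theorem.
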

Here, two $n$-tuples $\P=(p_1(t),p_2(t),\dots,p_n(t))$ and $\mathcal{Q}=(q_1(t),q_2(t),\dots,q_n(t))$ are said to be \emph{strongly coprime} if either $(p_n,q_n)=1$ or $\widetilde{(p_k,q_k)}=1$ for some $k<n$.

\begin{corollary}\label{thm:chl-revised}
Let $\P=(0,p_2(t),\dots,p_n(t))$ be an $n$-tuple such that
\begin{enumerate}
\item for $1<k<n$, $p_k(t)=\delta(t)^2$ for some nonzero nonunit $\delta(t)\in\Z[t^{\pm1}]$ with $\delta(1)=\pm1$ and $\delta(t)=\delta(t^{-1})$ up to the multiplication by $t^i$,
\item $p_n(t)=m^2t^2-(2m^2+1)t+m^2$ for some nonzero integer $m$.
\end{enumerate}
Then there is a  $\Z_2^\infty$ subgroup of $\F_n/\F_{n.5}$ which
\begin{enumerate}
\item is mapped injectively by $\phi_\P$, but
\item is mapped trivially by $\phi_\mathcal{Q}$ for every $n$-tuple $\mathcal{Q}$ which is strongly coprime to~$\P$.
\end{enumerate}
\begin{proof}
Let $\{f_1(t),f_2(t),\dots\}$ be an infinite set of pairwise strongly coprime Laurent polynomials of the form $\delta(t)^2$ for some $\delta(t)\in\Z[t^{\pm1}]$ with $\delta(1)=\pm1$ and $\delta(t)=\delta(t^{-1})$ up to the multiplication by $t^i$.
The set of polynomials $\{(kt-(k + 1))((k + 1)t-k)|k\in\Z\}$ in Example~\ref{example:strongly-coprime} is an instance.
Let $\P_k=(f_k(t),p_2(t),\dots,p_n(t))$.
By the choice of $f_k(t)$, $\P_1, \P_2, \dots$ are pairwisely strongly coprime.
By Theorem~\ref{theorem:cochran-harvey-leidy-2-torsion}, there is a negatively amphichiral $(n)$-solvable knot, say $L_k$, which is not $(n.5, \P_k)$-solvable but $(n.5, \P_m)$-solvable for all $m\neq k$.
This implies that the knots $L_k$ are linearly independent modulo 2 in $\F_n/(\F_{n.5}^\P\cap \F_n)$.
\end{proof}
\end{corollary}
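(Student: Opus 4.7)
The plan is to reduce to Theorem~\ref{theorem:cochran-harvey-leidy-2-torsion} by varying the first coordinate of the tuple across a pairwise strongly coprime family of polynomials. Since the fixed $\P$ has $p_1 = 0$ and hence does not itself satisfy the hypothesis of Theorem~\ref{theorem:cochran-harvey-leidy-2-torsion} at the first coordinate, I would first produce an infinite sequence $f_1(t), f_2(t), \ldots$ of pairwise strongly coprime Laurent polynomials, each of the form $\delta_k(t)^2$ with $\delta_k$ nonzero and non-unit, $\delta_k(1) = \pm 1$, and $\delta_k(t) = \delta_k(t^{-1})$ up to a unit. Squaring the polynomials $(kt-(k+1))((k+1)t-k)$ from Example~\ref{example:strongly-coprime} provides a concrete family; strong coprimeness is preserved by squaring since any common irreducible factor of $\delta_k(t^m)^2$ and $\delta_j(t^n)^2$ would descend to a common factor of $\delta_k(t^m)$ and $\delta_j(t^n)$.

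Next, set $\P_k = (f_k(t), p_2(t), \ldots, p_n(t))$; each $\P_k$ satisfies the hypotheses of Theorem~\ref{theorem:cochran-harvey-leidy-2-torsion}. Applying that theorem to each $\P_k$ produces a negatively amphichiral $(n)$-solvable knot $L_k$ with $\phi_{\P_k}(L_k) \neq 0$ and $\phi_\mathcal{Q}(L_k) = 0$ for every $\mathcal{Q}$ strongly coprime to $\P_k$. For $j \neq k$, the tuples $\P_j$ and $\P_k$ are strongly coprime via their first coordinates, so $\phi_{\P_k}(L_j) = 0$. Because each $L_k$ has order dividing $2$, a separation argument --- apply $\phi_{\P_{k_1}}$ to a nontrivial sum $L_{k_1} + \cdots + L_{k_s}$ of distinct $L_{k_i}$'s to get $\phi_{\P_{k_1}}(L_{k_1}) \neq 0$ --- shows the $L_k$'s are linearly independent modulo $2$ in $\F_n/\F_{n.5}$, producing a $\Z_2^\infty$ subgroup $H$.

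To obtain the precise conclusion of the corollary for the given $\P$, I would verify the two mapping properties as follows. For (2), any $\mathcal{Q}$ strongly coprime to $\P$ is also strongly coprime to each $\P_k$: if the coprimeness is witnessed at coordinate $i > 1$ then $\P_k$ and $\P$ share this coordinate, and if it is witnessed at coordinate $1$ then $\widetilde{(0, q_1)} = 1$ forces $q_1$ to be a unit, whence $\widetilde{(f_k, q_1)} = 1$ as well. Hence $\phi_\mathcal{Q}(H) = 0$. For (1), I would use that $p_1 = 0$ makes the top-level localization set in Definition~\ref{defn:derived-series-localized} consist only of units, so $\pi_1(W)^{(n+1)}_\P \subseteq \pi_1(W)^{(n+1)}_{\P_k}$ for any $4$-manifold $W$ and consequently $\F_{n.5}^\P \subseteq \F_{n.5}^{\P_k}$. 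Therefore $\phi_\P$ factors through $\phi_{\P_k}$, and the separation argument then lifts to show $\phi_\P$ is injective on $H$.

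The trickiest step, I expect, will be the inclusion $\F_{n.5}^\P \subseteq \F_{n.5}^{\P_k}$ underlying (1), which requires unwinding the localized derived series at the top level where $\P$ and $\P_k$ differ. Everything else reduces cleanly to bookkeeping of strong coprimeness and linear independence modulo $2$ once the input of Theorem~\ref{theorem:cochran-harvey-leidy-2-torsion} is in hand.
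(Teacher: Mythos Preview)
Your proposal is correct and follows essentially the same approach as the paper: vary the first coordinate over a pairwise strongly coprime family, apply Theorem~\ref{theorem:cochran-harvey-leidy-2-torsion} to each resulting tuple $\P_k$, and separate the resulting knots via the maps $\phi_{\P_k}$. You in fact supply details the paper's terse proof leaves implicit---the inclusion $\F_{n.5}^\P \subseteq \F_{n.5}^{\P_k}$ underlying (1), the verification of (2), and the squaring of the example polynomials to meet the $\delta(t)^2$ hypothesis.
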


As an obstruction for a knot being non-$(n.5,\P)$-solvable, they refined Theorem~4.2 of \cite{cochran2003knot}:
\begin{theorem}[Theorem 5.2  of \cite{cochran2011primary}]\label{thm:chl}
Let $K$ be an $(n.5,\P)$-solvable knot with a solution $V$. 
For a PTFA group $\Gamma$, let $\phi\colon\pi_1(M(K))\to\Gamma$ be a homomorphism which factors through $\pi_1(V)/\pi_1(V)^{(n+1)}_\P$.
Then $S(V,\phi)$ vanishes.
\end{theorem}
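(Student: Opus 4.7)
The plan is to adapt Cochran--Orr--Teichner's proof of Theorem~4.2 in \cite{cochran2003knot} to the $\P$-localized setting. Since $\Gamma$ is PTFA, the group ring $\Z\Gamma$ embeds in its Ore skew-field $\K\Gamma$, and by the $L^2$-index theorem $S(V,\phi) = \lsign_\Gamma(V) - \sign(V)$, where $\lsign_\Gamma(V)$ is the $L^2$-signature of the $\K\Gamma$-valued intersection form on $H_2(V;\K\Gamma)$. The target is to exhibit a $\K\Gamma$-Lagrangian in this form, which forces $\lsign_\Gamma(V) = \sign(V) = 0$ and hence $S(V,\phi) = 0$.

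First I would invoke the $(n.5,\P)$-solvability to fix a $\Z$-homology basis $\{L_i,D_i\}_{i=1}^r$ of $H_2(V;\Z)$ by embedded surfaces with the standard dual intersection pattern, $\pi_1(L_i)\subset \pi_1(V)^{(n+1)}_\P$, and $\pi_1(D_i)\subset \pi_1(V)^{(n)}_\P$. Since $\phi$ factors through $\pi_1(V)/\pi_1(V)^{(n+1)}_\P$, each $L_i$ lifts to a single sheet in the $\Gamma$-cover, giving classes $\widetilde{L}_i\in H_2(V;\K\Gamma)$. Disjointness of the $L_i$'s together with their trivial normal bundles yields $\langle\widetilde{L}_i,\widetilde{L}_j\rangle_{\K\Gamma}=0$, so their $\K\Gamma$-span is isotropic.

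The core step is the dimension count. One checks $\dim_{\K\Gamma}H_2(V;\K\Gamma)=2r$ by a Mayer--Vietoris computation from $\partial V=M(K)$, using the standard PTFA vanishing $H_1(M(K);\K\Gamma)=0$ (see \cite[Proposition~2.11]{cochran2003knot}) and Poincar\'e duality with $\K\Gamma$ coefficients. Linear independence of $\widetilde{L}_1,\dots,\widetilde{L}_r$ in $H_2(V;\K\Gamma)$ then follows from nondegeneracy of the intersection pairing, provided the lifted dual surfaces $\widetilde{D}_i$ pair nontrivially with the $\widetilde{L}_i$.

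The main obstacle, and precisely where the argument departs from the COT original, is verifying that the polynomial divisor sets $S_i$ appearing in Definition~\ref{defn:derived-series-localized} are sent to \emph{units} in $\K\Gamma$ under $\phi$. Since $\Gamma$ is PTFA, each abelian quotient in its rational derived series is torsion-free, so every nonzero element of $\Z[\Gamma^{(k)}/\Gamma^{(k+1)}]$ is a non-zero-divisor in $\Z\Gamma$ and hence invertible in $\K\Gamma$; the bookkeeping is to check that each generator $p_j(a)\in S_i$ has nonzero image under $\phi$, so that the image $\phi(\pi_1(V)^{(k)}_\P)$ sits inside the $k$-th derived subgroup of $\Gamma$ in a manner compatible with passing to $\K\Gamma$. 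Granting this, the lifted dual surfaces $\widetilde{D}_i$ are well-defined up to units and pair nondegenerately with the $\widetilde{L}_i$, establishing the Lagrangian property. Combined with $\sign(V)=0$ (from the ordinary Lagrangian $\mathrm{span}_\Q\{L_i\}\subset H_2(V;\Q)$), we conclude $S(V,\phi)=0$.
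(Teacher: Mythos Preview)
The paper does not prove this theorem; it is quoted directly from \cite{cochran2011primary} and used as a black box. So there is no ``paper's own proof'' to compare against, only the cited Cochran--Harvey--Leidy argument, which in turn is a verbatim adaptation of Cochran--Orr--Teichner's proof of \cite[Theorem~4.2]{cochran2003knot}.

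Your overall strategy---lift the $(n+1)$-Lagrangian surfaces $L_i$ to $H_2(V;\K\Gamma)$, show they span a half-dimensional isotropic subspace, and conclude $\lsign_\Gamma(V)=\sign(V)=0$---is exactly the cited approach. Where you go astray is in locating the ``main obstacle.'' There is no obstacle: the hypothesis that $\phi$ factors through $\pi_1(V)/\pi_1(V)^{(n+1)}_\P$ is precisely the statement that $\phi$ annihilates $\pi_1(L_i)\subset\pi_1(V)^{(n+1)}_\P$, so the $L_i$ lift automatically. Nothing about the divisor sets $S_i$ needs to be checked; the $\P$-localized derived series enters \emph{only} through this factoring hypothesis, and once the $L_i$ lift the argument is word-for-word identical to \cite[Theorem~4.2]{cochran2003knot}. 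Your proposed verification that ``each generator $p_j(a)\in S_i$ has nonzero image under $\phi$'' is both unnecessary and not well-posed (the $S_i$ live in group rings of successive quotients, not in $\pi_1(V)$ itself).

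There is also a genuine gap in your treatment of the duals. The surfaces $D_i$ satisfy only $\pi_1(D_i)\subset\pi_1(V)^{(n)}_\P$, and $\phi$ need not annihilate $\pi_1(V)^{(n)}_\P$, so in general the $D_i$ do \emph{not} lift to closed surfaces in the $\Gamma$-cover; your claim that ``the lifted dual surfaces $\widetilde{D}_i$ are well-defined up to units'' is unjustified, and the divisor-set discussion does nothing to repair it. In the actual COT/CHL argument the independence of the $\widetilde{L}_i$ and the Lagrangian property are obtained by a rank computation combining the Euler characteristic identity $\dim_{\K\Gamma}H_2(V;\K\Gamma)=\beta_2(V)=2r$ (using $H_1(M(K);\K\Gamma)=0$ and Poincar\'e--Lefschetz duality) with the nonsingularity of the $\K\Gamma$-intersection form and the $(n)$-level pairing $\lambda_n(L_i,D_j)=\delta_{ij}$ in $H_2(V;\Z[\pi/\pi^{(n)}_\P])$; one does not need the $D_i$ themselves to lift to $\K\Gamma$-coefficients.
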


This vanishing theorem can be extended to amenable group homomorphisms.

\begin{theorem}\label{thm:refinedAST}
Let $K$ be an $(n.5,\P)$-solvable knot with a solution $V$. For an amenable group $\Gamma$ in Strebel's class $D(R)$ for a ring $R$, let $\phi\colon\pi_1(M(K))\to\Gamma$ be a group homomorphism which factors through $\pi_1(V)/\pi_1(V)^{(n+1)}_\P$.
Then $S(V,\phi)$ vanishes.
\end{theorem}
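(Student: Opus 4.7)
The plan is to follow the original argument of Cochran--Harvey--Leidy for Theorem~\ref{thm:chl} line by line, replacing each appeal to PTFA-specific machinery with the corresponding amenable counterpart developed by Cha and Orr. More precisely, their proof of the PTFA case factors through two ingredients: (i) an algebraic input, namely the existence of a skew field of fractions for the group ring of the relevant quotient $\pi_1(V)/\pi_1(V)^{(n+1)}_\P$, used to build a twisted intersection form on $H_2$ of the universal cover; and (ii) an analytic input, namely the $L^2$-index theorem relating the $L^2$-signature defect of the 4-manifold to the signature of the twisted intersection form. Both ingredients have been extended to the amenable-in-Strebel-class setting in \cite{cha2012l2}, so the original proof transports.

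The concrete steps I would carry out are as follows. First, using that $\phi$ factors through $\pi_1(V)/\pi_1(V)^{(n+1)}_\P$, replace $\Gamma$ by the image $\bar\Gamma$ of this quotient group, which is still amenable and in $D(R)$ (quotients of amenable groups in $D(R)$ remain so). Second, consider the surfaces $\{L_i,D_i\}$ given by the $(n.5,\P)$-solvability of $V$, together with their lifts to the $\bar\Gamma$-cover $\widetilde V$. The crucial point is that, by the definition of $(n.5,\P)$-solvability, $\pi_1(L_i)\subset\pi_1(V)^{(n+1)}_\P$, so $\phi$ is trivial on $\pi_1(L_i)$; hence each $L_i$ lifts to $\widetilde V$ disjointly from its $\bar\Gamma$-translates in the appropriate sense, producing a half-basis on which the twisted intersection form vanishes. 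Together with the $D_i$'s this exhibits a Lagrangian decomposition (i.e.\ a dual basis) for the twisted intersection form over the Ore localization/skew field associated to $\bar\Gamma$. Third, invoke Cha--Orr's $L^2$-version of the Atiyah--Singer index theorem for amenable groups in $D(R)$ (Theorems~3.5 and~3.7 of \cite{cha2012l2}) to identify $S(V,\phi)$ with the signature of this twisted form over the skew field; since the form has a Lagrangian, this signature is zero.

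The main technical hurdle will be verifying that the algebraic constructions used by Cochran--Harvey--Leidy in the PTFA setting remain valid when $\bar\Gamma$ is merely amenable-in-$D(R)$. Two points need care: that $R[\bar\Gamma]$ satisfies the Ore condition so that one has a skew field $\mathcal{K}$ of right fractions in which to compute, and that the relevant $H_2$-computations (comparing $H_2(V;\mathcal{K})$ with the span of the surface-lifts) go through. Both are handled in \cite{cha2012l2} — amenability in Strebel's class $D(R)$ is exactly the condition that guarantees the Ore property together with the vanishing theorem needed in the Cha--Orr $L^2$-signature framework — so there is no additional obstruction beyond citing those results correctly. Once these pieces are in place, the remainder of the CHL argument (the reduction via the localized derived series $\{\pi_1(V)^{(i)}_\P\}$ to guarantee $\phi$ kills $\pi_1(L_i)$) is purely formal and needs no modification.
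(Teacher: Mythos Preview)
Your high-level strategy is correct and matches the paper: the paper simply notes that the proof is identical to Cha's proof of Theorem~\ref{thm:homology-cobordism-invariance-on-amenable} (Theorem~1.3 of \cite{cha2010amenable}), with the ordinary derived series replaced by the $\P$-localized one. The only role of the series is to ensure $\phi$ kills $\pi_1(L_i)$, and that substitution is indeed purely formal.

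However, your description of the technical machinery contains a genuine error. You assert that ``amenability in Strebel's class $D(R)$ is exactly the condition that guarantees the Ore property,'' and plan to compute the twisted intersection form over a skew field of fractions of $R[\bar\Gamma]$. This is false: amenable groups in $D(R)$ need not have group rings that are domains, let alone Ore domains. For instance $\Z_p$ is amenable and in $D(\Z_p)$, but $\Z[\Z_p]$ has zero divisors; and the very groups $G$ used later in this paper have $p$-torsion in the bottom layer $\S_n\pi/\S_{n+1}\pi$. The Ore/skew-field route is the PTFA argument, and it does \emph{not} transport.

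What Cha--Orr actually supply in \cite{cha2012l2} is a different replacement: one works with the group von Neumann algebra $\mathcal{N}\Gamma$ (or its Ore localization $\mathcal{U}\Gamma$) and the associated $L^2$-dimension function. The Strebel $D(R)$ condition is used not to get a skew field for $R[\Gamma]$ but to control $L^2$-Betti numbers via a chain-level comparison with $R$-coefficient homology (this is where the bound $\dim_R H_*(-;R)\ge \dim^{(2)}_\Gamma H_*(-;\mathcal{N}\Gamma)$ comes from). The Lagrangian argument you outline then takes place for the $\mathcal{N}\Gamma$-valued intersection form on $H_2(V;\mathcal{N}\Gamma)$, and one concludes that the $L^2$-signature equals the ordinary signature. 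So keep your steps (1)--(3), but replace every occurrence of ``Ore localization/skew field of $R[\bar\Gamma]$'' with the von Neumann algebra framework; that is the actual content of the citation you are invoking.
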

Since the proof is identical to the proof of Theorem 1.3 of \cite{cha2010amenable}, we omit the proof.

\section{Proof of Theorem~\ref{theorem:maintheorem2}}\label{section:proof-of-theorem-B}
In this section we prove the following more detailed version, from which Theorem~\ref{theorem:maintheorem2} follows immediately:

\begin{theorem}\label{theorem:pricisemaintheorem2}
Let $\P=(0,p_2(t),\dots,p_n(t))$ be an $n$-tuple of Laurent polynomials over $\Z$ such that

\begin{enumerate}
%\item $\ol{p}_1(t)\in\Z_d[t^{\pm1}]$ is nonzero and nonunit for some prime $d\in \Z$,
\item for $1<k<n$, $p_k(t)=\delta(t)^2$ for some nonzero nonunit $\delta(t)\in\Z[t^{\pm1}]$ with $\delta(1)=\pm1$ and $\delta(t)=\delta(t^{-1})$ up to the multiplication by~$t^i$,
\item $p_n(t)=m^2t^2-(2m^2+1)t+m^2$ for some nonzero integer~$m$.
\end{enumerate}
Then there is a $\Z_2^\infty$ subgroup of $\F_n/\F_{n.5}$ which
\begin{enumerate}
\item is mapped injectively by $\phi_\P$, but
\item is mapped trivially by $\phi_\mathcal{Q}$ for every $n$-tuple $\mathcal{Q}$ which is strongly coprime to~$\P$.
\end{enumerate}

Also, the subgroup of $\F_n/(\F_{n.5}^\P\cap\F_n)$ generated by these knots has trivial intersection with the subgroup generated by the Cochran, Harvey and Leidy's knots (Corollary~\ref{thm:chl-revised}).
\end{theorem}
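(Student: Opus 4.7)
The plan is to re-run the iterated infection construction of Section~\ref{section:construction} but with the auxiliary seed knots $K^k$ tailored so that their Alexander polynomials encode the tuple $\P$. More precisely, for each $1<k\le n$ I would replace the knot $K^k$ of Subsection~\ref{subsection:proof-of-theorem-A} (which had cyclic Alexander module of order $2t-5+2/t$) by a ribbon knot whose Alexander module is the cyclic module $\Z[t^{\pm1}]/\langle p_{n-k+1}(t)\rangle$, and take the generating curve of that module as the axis $\eta_k$. The outermost seed $K$ would still be $\mathbb{E}_m\#\mathbb{E}_m$, reflecting the condition on $p_n(t)$. The infection knots $J^0_i$ constructed in Subsection~\ref{subsection:K0} are retained verbatim, with the constant $C$ redefined to dominate the (explicit) Cheeger--Gromov bound for the new $K^k$'s via Theorem~\ref{theorem:explicit-cheeger-gromov}. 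The resulting negatively amphichiral knots $J_i$ are $(n)$-solvable with vanishing PTFA $L^2$-signature obstructions exactly as in Subsection~\ref{subsection:vanishing-PTFA-condition}, which in particular forces trivial intersection with the Cochran--Harvey--Leidy family (whose non-$(n.5)$-solvability is certified by a PTFA obstruction); this settles the last clause of the theorem.

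For (1), non-$(n.5,\P)$-solvability of any non-trivial finite connected sum $J=J_{i_1}\#\cdots\#J_{i_r}$, I would imitate the two-way $L^2$-signature computation of Subsection~\ref{subsection:proof-of-theorem-A}. Build the same bordism $W_0$ assembled from the standard infection cobordisms, the CHL-style $(n)$-solutions $V^j$, and a hypothetical $(n.5,\P)$-solution $V$, and define the quotient $\vp\colon\pi_1(W_0)\to G$ using the tower of subgroups $\{\S_k\pi\}$ but now taking $\S_{k+1}\pi$ to be the kernel of the natural map $\S_k\pi\to\frac{\S_k\pi}{[\S_k\pi,\S_k\pi]}\otimes_{\Z[\pi/\S_k\pi]}\Z[\pi/\S_k\pi]\,S_k^{-1}$, where $S_k$ is the CHL divisor set associated to $p_{n-k+1}$ (Definition~\ref{defn:derived-series-localized}). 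By Lemma~\ref{lemma:amenable_group} each $G$ arising this way is amenable in $D(\Z_d)$, and Theorem~\ref{thm:refinedAST} applied to $V$ gives $S(V,\vp)=0$, while the pieces $E_k$ and $V^j$ contribute zero by the same arguments as in Section~\ref{section:proof-of-theorem-A}. On the other hand, the boundary calculation yields $\rho(M(J^0_1),\vp)>C$ together with controlled contributions from the $M(K^k)$'s, provided we can re-establish the analogue of Lemma~\ref{Lemma:nontriviality}: the meridian of $J^0_1$ must survive in the final quotient.

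The bulk of the work, and the main obstacle, is the nontriviality analysis for the meridians under the new $S_k$. The reverse induction argument of Subsection~\ref{subsection:proof-of-theorem-A} translates word-for-word up to the moment one must show $\eta_{k+1}$ is nonzero in $H_1(M(J^{k+1}_1),\R)$ with $\R$ the appropriate Ore localization. Here the change of Alexander polynomial matters: $H_1(M(K^{k+1}),\Z[t^{\pm1}])\cong\Z[t^{\pm1}]/\langle p_{n-k}(t)\rangle$, and one must argue that $p_{n-k}(t)$ does not become a unit after applying the divisor set $S_k$ built out of $p_{n-k+1}$. For generic $\P$ this can be done directly by the kind of basis/root argument used at the end of Subsection~\ref{subsection:proof-of-theorem-A}, together with Theorems~\ref{lemma:nonsingularity} and~\ref{theorem:cochran-harvey-leidy-theorem}; the condition that successive $p_k$'s are genuine polynomials (not units, with $\delta(1)=\pm 1$) makes the module $\Z[t^{\pm1}]/\langle p_{n-k}(t)\rangle\otimes\R$ nonzero.

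For (2), I would show that, for $\Q$ strongly coprime to $\P$, the natural $(n)$-solution $V$ for $J_i$ built in Subsection~\ref{subsection:vanishing-PTFA-condition} is in fact an $(n.5,\Q)$-solution. The key observation is that the generator of $H_1(M(K^k),\Z[t^{\pm1}])$ satisfies $p_{n-k+1}(t)\cdot\eta_k=0$, and by strong coprimality $p_{n-k+1}$ lies in the divisor set $S_k^\Q$ used to build the $\Q$-localized derived series of $\pi_1(V)$. Consequently $\eta_k$, and hence the meridian of each infection knot one level deeper, maps into $\pi_1(V)^{(k+1)}_\Q$ rather than merely $\pi_1(V)^{(k+1)}$, so the surfaces comprising the $(n)$-solution lift into $\pi_1(V)^{(n+1)}_\Q$. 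Once this is in place, combining with the trivial-intersection statement from the first paragraph produces the desired $\Z_2^\infty$ subgroup, completing the proof. The technical hinge everywhere is the nontriviality lemma in the $\P$-localized setting; the rest is bookkeeping.
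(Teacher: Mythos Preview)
Your overall architecture matches the paper's, but there is a genuine gap in the choice of normal series. You propose to build $\vp$ from the tower $\{\S_k\pi\}$ using the Cochran--Harvey--Leidy divisor sets of Definition~\ref{defn:derived-series-localized} verbatim. The paper does \emph{not} do this: at the level defining $\T_3\pi$ it enlarges the divisor set $T_2$ by adjoining the elements $p_{n-1}(\mu^i\beta\mu^{-i})$, $i\in\Z$. These extra divisors are what force $\overline{\eta}_{n-1}$ to vanish in $H_1(W_{n-1},\Z[\Lambda]T_2^{-1})$, hence $\vp_{n-2}(\overline{\mu}_{n-2})=0$, and then inductively $\vp(\overline{\mu}_0)=0$. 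With only the CHL divisor set (polynomials strongly coprime to $p_{n-1}$), the element $p_{n-1}(\beta)$ is \emph{not} invertible, so $\overline{\mu}_{n-2}$ need not die; by the same nontriviality argument you invoke for $\mu_0$, the mirror meridian $\overline{\mu}_0$ would also survive. Then in the boundary computation you get $\rho(M(\overline{J^0_1}),\vp)=-\rho(M(J^0_1),\vp)$, the two large terms cancel, and no contradiction arises. The asymmetry between the $\alpha$- and $\beta$-sides is not automatic; it must be built into the quotient by hand, exactly as in Section~\ref{section:proof-of-theorem-A} and again in the definition of $T_2$ in Section~\ref{section:proof-of-theorem-B}.

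Two smaller points. First, the paper indexes so that $K^k$ has Alexander module $\Z[t^{\pm1}]/\langle p_k(t)\rangle$, not $\Z[t^{\pm1}]/\langle p_{n-k+1}(t)\rangle$; your convention is reversed (and there is no $K^n$). Second, because $p_1=0$, the innermost seed $K^1$ requires a separate choice: the paper takes its Alexander polynomial to be $\delta(t)^2$ with $\delta$ remaining a nonunit in $\Z_d[t^{\pm1}]$ for every prime $d$ (e.g.\ $\delta(t)=t^2-t+1$), so that the final $\Z_d$-coefficient step still gives a nonzero module. Simply recycling the $K^1$ from Section~\ref{section:proof-of-theorem-A}, or leaving $K^1$ unspecified as you do, does not guarantee this. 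Your treatment of part~(2) is essentially correct in spirit, though note that strong coprimality of $\P$ and $\mathcal{Q}$ only gives $\widetilde{(p_k,q_k)}=1$ at \emph{some} level, not all; the paper simply cites \cite[Theorem~5.3]{cochran20112-torsion} for this step.
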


We remark that the overall outline of the proof is parallel to that of the proof for Theorem~\ref{theorem:maintheorem1}.

\subsection{Proof of Theorem \ref{theorem:pricisemaintheorem2} modulo infection axis analysis}

Throughout this section, we fix an $n$-tuple $\P$ which satisfies the conditions in Theorem~\ref{theorem:pricisemaintheorem2}.
%Fix an $n$-tuple $\P=(0,p_2(t),\dots,p_n(t))$ of Laurent polynomials over $\Z$ which satisfies the following conditions  for the rest of this section:
%\begin{enumerate}
%\item for $1<k<n$, $p_k(t)=\delta(t)^2$ for some nonzero nonunit $\delta(t)\in\Z[t^{\pm1}]$ such that $\delta(1)=\pm1$ and $\delta(t)=\delta(t^{-1})$ up to the multiplication by $t^i$,
%\item $p_n(t)=m^2t^2-(2m^2+1)t+m^2$ for some nonzero integer $m$,
%\end{enumerate}
We construct $(n)$-solvable knots $J_1,J_2,\dots$ using the iterated infection construction as in Section~\ref{section:construction}, with the following choice of $K$, $K^k$, and~$J^0_i$'s:

\begin{enumerate}
\item[$K$ :] Let $K$ be the connected sum of two copies of $\mathbb{E}_m$ (see Figure~ \ref{figure:E_m} and \ref{figure:K}).
We take two axes $\a$ and $\b$ as in Figure~\ref{figure:K}.
\item[$K^k$:]
For any $q(t)\in\Z[t^{\pm1}]$ such that $q(1)=\pm1$ and $q(t^{-1})=q(t)$ up to the multiplication by $t^i$, there is a slice knot whose classical Alexander module is a cyclic $\Z[t^{\pm1}]$-module of the form $\Z[t^{\pm1}]/\langle q(t)^2\rangle$ (see \cite[Theorem~5.18]{cha2007the}).
For $1<k<n$, we choose a slice knot whose Alexander module is $\Z[t^{\pm1}]/\langle p_k(t)\rangle$ as $K^k$.

Let $\delta(t)\in\Z[t^{\pm1}]$ be a nonzero Laurent polynomial such that $\delta(1)=\pm1$, $\delta(t^{-1})=\delta(t)$ up to the multiplication by $t^i$, and $\ol{\delta}(t)\in\Z_d[t^{\pm1}]$ is nonunit for any prime $d$.
By Theorem 12 and 13 of \cite{higman1940the}, polynomial $t^2-t+1$ for example works.
Let $K^1$ be a slice knot whose Alexander module is $\Z[t^{\pm1}]/\langle \delta(t)^2\rangle$.

Choose a closed curve $\eta_k$ in $S^3-K^k$ which is unknotted in $S^3$ and represents a generator of the Alexander module of $K^k$.
Note that we can always find such a curve (see the proof of \cite[Theorem 3.8]{cochran2007knot}).

\item[$J^0_i$:] By Theorem~\ref{thm:cheeger-gromov-bound}, there is a constant $C$ which always is greater than
\[\bigg|\rho(M(K),\phi_K)+\sum_{k=1}^{n-1} \rho(M(K^k),\phi_k)+\sum_{k=1}^{n-1} \rho(M(\ol{K^k}),\ol{\phi}_k)\bigg|, \]
independent of the choice of homomorphisms $\phi_K$, $\phi_k$ and $\ol{\phi}_k$, $k=1,\dots,n-1$ on $M(K)$, $M(K^k)$, and $M(\ol{K^k})$, respectively.
Apply Proposition~\ref{prop:knot-of-cha} with this constant and construct knots~$J^0_i$.
\end{enumerate}

To show that any nontrivial finite connected sum of knots $J_i$ cannot be $(n.5,\P)$-solvable, we proceed similarly to the proof of Theorem~\ref{theorem:maintheorem1} in Section~\ref{section:proof-of-theorem-A}: 
first assume that $J\equiv J_1\#\dots\# J_r$ is $(n.5,\P)$-solvable, and then construct a 4-manifold $W_0$ in Figure~\ref{figure:W_k} and a group homomorphism $\vp$ on $\pi_1(W_0)$, via certain normal series which we will denote as $\T_k\pi_1(W_0)$, $k=0,\dots,n+1$.
Then two evaluations of $S(W_0,\vp)$, which produce different values, give a contradiction, showing that $J$ cannot be $(n.5,\P)$-solvable.

Now we define a subgroup $\T_k\pi_1(W_i)$ of $\pi_1(W_i)$.
For simplicity, denote $\pi_1(W_i)$ as $\pi$.
For $k=0,1,2$, let $\T_k\pi$ be $\pi^{(k)}_\P$, defined in Definition~\ref{defn:derived-series-localized}.

Let $T_2$ be the multiplicative subset of $\Q[\T_1\pi/\T_2\pi]\subset\Q[\pi/\T_2\pi]$ generated by
\[\{q(a)|q(t)\in\Z[t^{\pm1}], \widetilde{(q,p_{n-1})}=1,q(1)\neq0,a\in\T_1\pi/\T_2\pi    \} \cup \{p_{n-1}(\mu^i\b\mu^{-i})|i\in\Z\},\]
where $\mu$ is the meridian of $K$.
By Proposition~\ref{proposition:right-divisor-set}, $T_2$ is a right divisor set of $\Q[\pi/\T_2\pi]$.
Let
\[\T_3\pi=\ker\left(\T_2\pi\to \frac{\T_2\pi}{[\T_2\pi,\T_2\pi]}\otimes_{\Z[\pi/\T_2\pi]}\Q[\pi/\T_2\pi]T_2^{-1}     \right).\]

For $2<k<n$, let $T_k$ be the multiplicative subset of $\Q[\T_{k-1}\pi/\T_k\pi]$ generated by
\[\{q(a)|q(t)\in\Z[t^{\pm1}], \widetilde{(q,p_{n+1-k})}=1,q(1)\neq0,a\in\T_{k-1}\pi/\T_k\pi    \},\]
which is a right divisor set by Proposition~\ref{proposition:right-divisor-set}, and let
\[\T_{k+1}\pi=\ker\left(\T_k\pi\to \frac{\T_k\pi}{[\T_k\pi,\T_k\pi]}\otimes_{\Z[\pi/\T_k\pi]}\Q[\pi/\T_k\pi]T_k^{-1}     \right).\]

Finally, let
\[\T_{n+1}\pi=\ker\left(\T_n\pi\to \frac{\T_n\pi}{[\T_n\pi,\T_n\pi]}\otimes_{\Z}\Z_d     \right),\]
where $d=d_1$.

Let $G=\pi/\T_{n+1}\pi$ and $\vp\colon\pi_1(W_0)\to G$ be the quotient map.
Note that $G$ is amenable and lies in the Strebel's class $D(\Z_d)$ by Lemma~\ref{lemma:amenable_group}.
We calculate the $L^2$-signature defect $S(W_0,\vp)$.

First, calculate $S(W_0,\vp)$ by using Novikov additivity theorem (Theorem~\ref{thm:Novikov-additivity}).
The calculation is parallel to \emph{First method} in Section~\ref{section:proof-of-theorem-A}, except the use of obstruction theorem for $(n.5,\P)$-solvability (Theorem~\ref{thm:refinedAST}), instead of for $(n.5)$-solvability (Theorem~\ref{thm:homology-cobordism-invariance-on-amenable})
We omit the detail.
The calculation gives that $S(W_0,\vp)$ is zero.

Next, we calculate $S(W_0,\vp)$ by using the fact that $S(W_0,\vp)=\rho(\d W_0,\vp)$.
We follow the exactly same argument as \emph{Second method} in the proof of Theorem~\ref{theorem:maintheorem1}, except the following lemma in place of Lemma~\ref{Lemma:nontriviality}:

\begin{lemma}\label{Lemma:nontriviality2}
For $k=0,1,\dots,n-1$, the homomorphism
\[\vp_k\colon \pi_1(W_k)\to \frac{\pi_1(W_k)}{\T_{n-k+1}\pi_1(W_k)}\]
sends the meridian $\mu_k$ of $J^k$ and $\ol{\mu}_k$ of $\ol{J^k}$ into the subgroup $\T_{n-k}\,\pi_1(W_k)/\T_{n-k+1}\,\pi_1(W_k)$.
Also, $\vp_k(\mu_k)$ is nontrivial for any $k=0,\dots,n-1$, while $\vp_k(\ol{\mu}_k)$ is nontrivial for $k=n-1$ and trivial for other $k$.
\end{lemma}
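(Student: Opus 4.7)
My plan is to mirror the proof of Lemma~\ref{Lemma:nontriviality} with the derived series localized at $\P$ replacing the rational derived series, and with the multiplicative subsets $T_k$ replacing $S$. For the first statement, the same reverse induction on $k$ goes through verbatim: since the ordinary derived series satisfies $\pi^{(i)} \le \pi^{(i)}_\P = \T_i\pi$, tracing $\mu_k = \eta_{k+1}$ (and analogously $\ol{\mu}_k = \ol{\eta}_{k+1}$) through the cobordism $E_k$ places them inside $\T_{n-k}\pi_1(W_k)$.

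For the nontriviality of $\vp_k(\mu_k)$ I reverse induct on $k$ again. The case $k=n$ is unchanged, being a Mayer--Vietoris identification of $H_1(W_n)$ with $\Z$. For $k=n-1$, the key ingredient is a $\P$-localized analogue of Lemma~\ref{lemma:injective}: the inclusion induces an injection
\[
\Phi_\P\colon H_1(M(\mathbb{E}_m), \R_{n-1}) \hookrightarrow H_1(W_{n-1},\R_{n-1}),
\]
where $\R_{n-1} = \Z[\pi_1(W_{n-1})/\T_1\pi_1(W_{n-1})](S^*_{p_n})^{-1}$. This is proved as in Lemma~\ref{lemma:injective} using Theorem~\ref{theorem:cochran-harvey-leidy-theorem} on the $1$-bordism $W_{n-1}$ together with the non-singularity of the higher-order Blanchfield form (Theorem~\ref{lemma:nonsingularity}); here one exploits that the Alexander module of $\mathbb{E}_m$ is $\Z[t^{\pm1}]/\langle p_n(t)\rangle$, which survives localization at $S^*_{p_n}$ because $p_n$ is not strongly coprime to itself. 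The independence of $\a$ and $\b$ in the quotient $\T_1\pi_1(W_{n-1})/\T_2\pi_1(W_{n-1})$ then follows from \cite[Lemma~7.8]{cochran20112-torsion}. For $k\le n-2$ the argument is by contradiction: if $\mu_k = \eta_{k+1}$ were to vanish in $H_1(W_{k+1},\R_k)$ for the appropriate localization $\R_k$, then Theorem~\ref{theorem:cochran-harvey-leidy-theorem} would force the Blanchfield self-pairing of $[\eta_{k+1}]$ in
\[
H_1(M(J_1^{k+1}),\R_k) \cong \bigl(\Z[t^{\pm1}]/\langle p_{k+1}(t)\rangle\bigr) \otimes_{\Z[t^{\pm1}]} \R_k
\]
to vanish. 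By non-singularity this module would have to be zero; however, by Definition~\ref{defn:derived-series-localized} the set $T_{n-k}$ is generated by polynomials strongly coprime to $p_{k+1}$, so $p_{k+1}$ itself is not inverted and the module is nonzero---a contradiction.

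For triviality of $\vp_k(\ol{\mu}_k)$ when $k\le n-2$, the base case $k=n-2$ is engineered into the definition of $T_2$: the curve $\ol{\mu}_{n-2} = \ol{\eta}_{n-1}$ maps into $H_1(M(\ol{K^{n-1}}),\Q[\pi/\T_2\pi]T_2^{-1}) \cong \Z[t^{\pm1}]/\langle p_{n-1}(t)\rangle \otimes T_2^{-1}$, and $T_2$ contains each $p_{n-1}(\mu^i\b\mu^{-i})$ by construction, so in particular $p_{n-1}(\b)$, forcing this module to vanish. The inductive step $k<n-2$ transfers word-for-word from Lemma~\ref{Lemma:nontriviality}, tracking $\ol{\eta}_{k+1}$ through $\langle\ol{\mu}_{k+1}\rangle^{(1)} \subset \T_{n-k+1}\pi_1(W_{k+1})$.

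The main obstacle is the base case $k=n-1$ for nontriviality: one must verify that after passing to the noncommutative localization $\R_{n-1}$, the relevant Blanchfield module $\Z[t^{\pm1}]/\langle p_n(t)\rangle \otimes \R_{n-1}$ is still simple (or at least supports no self-annihilating element) so that the strategy of Lemma~\ref{lemma:injective} transfers. The persistence of $\a$ and $\b$ as independent classes in this localized module---and in particular their failure to satisfy any relation of the form $\mu^i\b^m\mu^{-i} = \a^{m_i}$---will be inherited from the classical Lemma~7.8 of \cite{cochran20112-torsion}; translating that computation faithfully across the $S^*_{p_n}$-localization is the main bookkeeping burden of the proof.
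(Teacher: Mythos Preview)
Your overall architecture matches the paper's proof, but you have misidentified where the main difficulty lies and this leaves a genuine gap in the inductive step.

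The case $k=n-1$ is in fact straightforward: the coefficient ring $\Q[t^{\pm1}](S^*_{p_n})^{-1}$ is commutative, and since $p_n$ is irreducible over $\Q$ the module $\Q[t^{\pm1}]/\langle p_n(t)\rangle$ remains simple after localizing at $S^*_{p_n}$ (it is $S^*_{p_n}$-torsion free by \cite[Proposition~4.13]{cochran2011primary}). The injectivity argument of Lemma~\ref{lemma:injective} then transfers verbatim, and \cite[Lemma~7.8]{cochran20112-torsion} is not needed at this stage.

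The real obstacle is at $k=n-2$. Your claim that ``$T_{n-k}$ is generated by polynomials strongly coprime to $p_{k+1}$'' is false precisely when $n-k=2$: by construction $T_2$ also contains the elements $p_{n-1}(\mu^i\b\mu^{-i})$---exactly what you use for the triviality of $\ol{\mu}_{n-2}$. So ``$p_{n-1}$ is not inverted'' does not immediately yield nonvanishing of $\Z[t^{\pm1}]/\langle p_{n-1}(t)\rangle \otimes_{\Z[t^{\pm1}]} \R$. The paper handles this by supposing the module vanishes, writing $s = p_{n-1}(\a)\cdot f$ with $s\in T_2$, invoking \cite[Proposition~4.5]{cochran2011primary} to strip away the factors of $s$ strongly coprime to $p_{n-1}$ so that only factors $p_{n-1}(\mu^i\b\mu^{-i})$ remain, and \emph{then} applying \cite[Lemma~7.8]{cochran20112-torsion} to contradict a relation $\mu^i\b^m\mu^{-i} = \a^{m_i}$. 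This is the computation you sketch in your final paragraph, but it belongs at $k=n-2$, not $k=n-1$.

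Two smaller omissions: for $0<k<n-2$ the nonvanishing of $\Z[t^{\pm1}]/\langle p_{k+1}(t)\rangle \otimes_{\Z[t^{\pm1}]} \R$ is not automatic from ``$p_{k+1}$ not inverted''; one needs \cite[Theorem~4.12]{cochran2011primary} to see the module is $T_{n-k}$-torsion free. And for $k=0$ the coefficient ring is $\Z_d[\pi_1(W_1)/\Lambda]$ with no further localization; here nonvanishing depends on the specific choice of $K^1$ whose Alexander polynomial $\delta(t)^2$ reduces to a nonunit in $\Z_d[t^{\pm1}]$.
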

This provides that $S(W_0,\vp)$ is nonzero.
Two contradictory calculations of $S(W_0,\vp)$ finish the nontriviality result of Theorem~\ref{theorem:pricisemaintheorem2}.

In Theorem~5.3 of \cite{cochran20112-torsion} it is proved that, for $\mathcal{Q}$ strongly coprime to $\P$, $J_i$ vanishes in $\F_n/(\F_{n.5}^\mathcal{Q}\cap\F_n)$.
This finishes the proof of Theorem~\ref{theorem:pricisemaintheorem2}.

\subsection{Nontriviality of the infection axes}

In this subsection we prove Lemma~\ref{Lemma:nontriviality2}.
As we saw in the proof of Lemma~\ref{Lemma:nontriviality}, $\vp_k(\mu_k)$ and $\vp_k(\ol{\mu}_k)$ lie in $\pi_1(W_k)^{(n-k)}$, hence in $\T_{n-k}\pi_1(W_k)$, for all $k$.
Also, a Mayer-Vietoris sequence argument shows that $\mu_n$ does not vanish in $$\pi_1(W_n)/\T_1\pi_1(W_n)= H_1(W_n)=\Z.$$

We show that neither $\vp_{n-1}(\mu_{n-1})$ nor $\vp_{n-1}(\ol{\mu}_{n-1})$ are zero.
Since $p_n(t)=m^2t^2-(2m^2+1)t+m^2$ is the Alexander polynomial of $\mathbb{E}_m$ (e.g., see, \cite[Example~4.10]{cochran2011primary}) and $\Q[t^{\pm1}](S^*_{p_n})^{-1}$ is a flat $\Q[t^{\pm1}]$-module, $H_1(M(\mathbb{E}_m),\Q[t^{\pm1}](S^*_{p_n})^{-1})$ is isomorphic~to 
\[\frac{\Q[t^{\pm1}]}{\langle p_n(t)\rangle}\otimes_{\Z[t^{\pm1}]} \Q[t^{\pm1}](S^*_{p_n})^{-1}.\]
Since ${\Q[t^{\pm1}]}/{\langle p_n(t)\rangle}$ is $S^*_{p_n}$-torsion free (see Proposition~4.13 of \cite{cochran2011primary}), the map
\begin{equation*}
\Psi\colon H_1(M(\mathbb{E}_m),\Q[t^{\pm1}])\to H_1(M(\mathbb{E}_m),\Q[t^{\pm1}](S^*_{p_n})^{-1})\end{equation*}
induced from the inclusion $\Q[t^{\pm1}]\hookrightarrow\Q[t^{\pm1}](S^*_{p_n})^{-1}$ is injective.

Note that the localized Alexander module of $K=\mathbb{E}_m\#\mathbb{E}_m$ splits into the direct sum of two copies of localized Alexander modules (localized with the coefficient system) of~$\mathbb{E}_m$.

Let $\Phi$ be the following composition:
\[
\begin{diagram}
\node{H_1(M(\mathbb{E}_m),\Q[t^{\pm1}](S^*_{p_n})^{-1})} \arrow{e}
\node{H_1(M(K),\Q[t^{\pm1}](S^*_{p_n})^{-1})} \arrow{e,t}{\iota_*}
\node{H_1(W_{n-1},\Q[t^{\pm1}](S^*_{p_n})^{-1}),}
\end{diagram}
\]
where the homologies are twisted by abelianization maps.
The first map is induced from the left copy of $\mathbb{E}_m$ of $K=\mathbb{E}_m \#\mathbb{E}_m$ (see Figure~\ref{figure:K}) and the second map is induced from the inclusion $\iota\colon M(K)\hookrightarrow W_{n-1}$.
%Also, the Blanchfield pairing on the localized Alexander module of $K$ also splits as the sum of two copies of Blanchfield pairing on the localized Alexander module of $\mathbb{E}_m$.
%So we have the following inclusion
%\begin{equation}\label{eq:2}
%H_1(M(\mathbb{E}_m),\Q[t^{\pm1}](S^*_{p_n})^{-1}) \to
%H_1(M(K),\Q[t^{\pm1}](S^*_{p_n})^{-1}),
%\end{equation}
%corresponding to the left side $\mathbb{E}_m$ of $K$ in Figure~\ref{figure:K}.
%Finally, the inclusion $\iota\colon M(K) \hookrightarrow W_{n-1}$ induces a map
%\begin{equation}\label{eq:3}
%H_1(M(K),\Q[t^{\pm1}](S^*_{p_n})^{-1})\to H_1(W_{n-1},\Q[t^{\pm1}](S^*_{p_n})^{-1}).
%\end{equation}

Similar to the classical Alexander module case, the Blanchfield form on $H_1(M(K),\Q[t^{\pm1}](S^*_{p_n})^{-1})$ splits as the sum of two copies of Blanchfield form on $H_1(M(\mathbb{E}_m),\Q[t^{\pm1}](S^*_{p_n})^{-1})$.
Note that $H_1(M(\mathbb{E}_m),\Q[t^{\pm1}](S^*_{p_n})^{-1})$ is a simple $\Q[t^{\pm1}](S^*_{p_n})^{-1}$-module.
Using this fact, the same argument with that of Lemma~\ref{lemma:injective} proves $\Phi$ is injective.
%Let $P\subset H_1(M(\mathbb{E}_m),\Q[t^{\pm1}](S^*_{p_n})^{-1})$ be the kernel of the composition of maps in (\ref{eq:2}) and (\ref{eq:3}).
%By Theorem~\ref{theorem:cochran-harvey-leidy-theorem}, the Blanchfield form on $H_1(M(\mathbb{E}_m),\Q[t^{\pm1}](S^*_{p_n})^{-1})$ vanishes over~$P$.
%On the other hand, $H_1(M(\mathbb{E}_m),\Q[t^{\pm1}])$ is a simple $\Q[t^{\pm}]$-module since $$p_n(t)=m^2t^2-(2m^2+1)t+m^2$$ is irreducible in $\Q[t^{\pm1}]$.
%Since $H_1(M(\mathbb{E}_m),\Q[t^{\pm1}](S^*_{p_n})^{-1})$ is a localization of $H_1(M(\mathbb{E}_m),\Q[t^{\pm1}])$ with $\Q[t^{\pm1}](S^*_{p_n})^{-1}$, $H_1(M(\mathbb{E}_m),\Q[t^{\pm1}](S^*_{p_n})^{-1})$ is a simple $\Q[t^{\pm1}](S^*_{p_n})^{-1}$-module.
%Since $P$ is a sub-$\Q[t^{\pm1}](S^*_{p_n})^{-1}$-module of $H_1(M(\mathbb{E}_m),\Q[t^{\pm1}](S^*_{p_n})^{-1})$, $P$ must be either zero or nonproper submodule of $H_1(M(\mathbb{E}_m),\Q[t^{\pm1}](S^*_{p_n})^{-1})$.
%If $P=H_1(M(\mathbb{E}_m),\Q[t^{\pm1}](S^*_{p_n})^{-1})$, that is, the Blanchfield form on $H_1(M(\mathbb{E}_m),\Q[t^{\pm1}](S^*_{p_n})^{-1})$ vanishes, this is contradicted with Lemma~\ref{lemma:nonsingularity} that the Blanchfield form on the nontrivial module $H_1(M(\mathbb{E}_m),\Q[t^{\pm1}](S^*_{p_n})^{-1})$ is nonsingular.
%Hence $P$ must be a trivial submodule.
%Combining Equation (\ref{eq:1}), (\ref{eq:2}) and (\ref{eq:3}), $H_1(M(\mathbb{E}_m),\Q[t^{\pm1}])$ injects into $H_1(W_{n-1},\Q[t^{\pm1}](S^*_{p_n})^{-1})$.

As in Section~\ref{section:proof-of-theorem-A}, the injectivity of $\Phi\circ\Psi$ implies that both $\mu_{n-1}$ and $\ol{\mu}_{n-1}$ are nontrivial in $\pi_1(W_{n-1})/\T_2\pi_1(W_{n-1})$.

%We proceed by using the reverse induction on $k=n-1,\dots,0$ with previous result as the initial step.
For $k\leq n-2$, suppose that the homomorphism
\[\vp_{k+1}\colon\pi_1(W_{k+1})\to \frac{\pi_1(W_{k+1})}{\T_{n-k}\pi_1(W_{k+1})}\]
sends $\mu_{k+1}$ into $\T_{n-k-1}\pi_1(W_{k+1})/\T_{n-k}\pi_1(W_{k+1})$ nontrivially.

Denote $\T_{n-k}\pi_1(W_{k+1})$ by $\Lambda$ temporarily.
Let $\mathcal{R}$ be $\Q[\pi_1(W_{k+1})/\Lambda]T_{n-k}^{-1}$ for $k>0$ and $\Z_d[\pi_1(W_{k+1})/\Lambda]$ for $k=0$.
%Then $H_1(W_{k+1},\mathcal{R})$ is isomorphic to
%\[\frac{\Lambda}{[\Lambda,\Lambda]}\otimes_{\Z[\pi_1(W_{k+1})/\Lambda]}\mathcal{R}, \]
%where $\mathcal{R}$ is $\Q[\pi_1(W_{k+1})/\Lambda]T_{n-k}^{-1}$ if $k>0$, and $\Z_d[\pi_1(W_{k+1})/\Lambda]$ if $k=0$ for the same reason as in Section~\ref{section:proof-of-theorem-A}.
Recall that $\T_{n-k+1}\pi_1(W_{k+1})$ is the kernel of the map
\[\Lambda=\T_{n-k}\pi_1(W_{k+1}) \to \frac{\Lambda}{[\Lambda,\Lambda]}\otimes_{\Z[\pi_1(W_{k+1})/\Lambda]}\mathcal{R}\hookrightarrow H_1(W_{k+1},\mathcal{R}).  \]

So we have a natural injective map
\[\frac{\T_{n-k}\pi_1(W_{k+1})}{\T_{n-k+1}\pi_1(W_{k+1})}\hookrightarrow H_1(W_{k+1},\mathcal{R}).\]

By Lemma \ref{lemma:isomorphism}, we have an isomorphism
\[\frac{\T_{n-k}\pi_1(W_{k+1})}{\T_{n-k+1}\pi_1(W_{k+1})}\cong \frac{\T_{n-k}\pi_1(W_k)}{\T_{n-k+1}\pi_1(W_k)}.\]

Hence now we have a map from $\T_{n-k}\pi_1(W_k)/\T_{n-k+1}\pi_1(W_k)$ to $H_1(W_{k+1},\mathcal{R})$.

Now, suppose $\mu_k$ maps trivially by $\vp_k$, that is, $\mu_k$ is in $\T_{n-k+1}\pi_1(W_k)$.
Since $\mu_k=\eta_{k+1}$ in $\pi_1(W_{k})$, $\eta_{k+1}$, isotoped into $W_{k+1}$, vanishes in $H_1(W_{k+1},\mathcal{R})$.
Hence $\eta_{k+1}\subset M(J_1^{k+1})$ lies in the kernel of the inclusion-induced~map
\[H_1(M(J_1^{k+1}),\mathcal{R})\to H_1(W_{k+1},\mathcal{R})  \]

The inductive assumption enables us to apply Theorem~\ref{theorem:cochran-harvey-leidy-theorem}, so $\Bl([\eta_{k+1}],[\eta_{k+1}])$ for $\Bl$ the Blanchfield form on $H_1(M(J_1^{k+1}),\mathcal{R})$ vanishes.

Note that $H_1(M(J^{k+1}_1),\mathcal{R})$ is isomorphic to $$H_1(M(K^{k+1}),\Z[t^{\pm1}])\otimes_{\Z[t^{\pm1}]}\mathcal{R}=\frac{\Z[t^{\pm1}]}{\langle p_{k+1}(t)\rangle}\otimes_{\Z[t^{\pm1}]}\mathcal{R}.$$
Since $\eta_{k+1}$ generates $H_1(M(K^{k+1}),\Z[t^{\pm1}])$, $\eta_{k+1}$ also generates $H_1(M(J^{k+1}_1),\mathcal{R})$ as an $\mathcal{R}$-module.
Then Theorem~\ref{lemma:nonsingularity} assures that $$\frac{\Z[t^{\pm1}]}{\langle p_{k+1}(t)\rangle}\otimes_{\Z[t^{\pm1}]}\mathcal{R}$$ is a zero module.

%Note that $H_1(M(J_1^{k+1}),\mathcal{R})$ is of the form
%\[\frac{\Z[t^{\pm1}]}{\langle p_{k+1}(t)\rangle}\otimes_{\Z[t^{\pm1}]}\mathcal{R}.\]

The non-unital property of the Alexander polynomial $p_1(t)$ of $K^1$ assures that this module is nontrivial when $k=0$.
Also the following theorem shows that $H_1(M(J^{k+1}_1),\R)$ is nontrivial, possibly except the case when $k=n-2$.
\begin{theorem}\cite[Theorem~4.12]{cochran2011primary}
Suppose $A$ is a normal subgroup of $\Gamma$ where $A$ is a torsion-free abelian group and $\Q\Gamma$ is a right Ore domain. Suppose $p(t)\in\Q[t^{\pm1}]$ is non-zero. Then for any $a_i\in A$,
\[\frac{\Q\Gamma}{\langle p(a_1)\dots p(a_r)\rangle}\textrm{ is }S_p\textrm{-torsion-free,}	\]
while for any $q(t)\in\Q[t^{\pm1}]$ with $q(1)\neq0$ and $\widetilde{(p(t),q(t))}=1$
\[\frac{\Q\Gamma}{\langle q(a)\rangle}\textrm{ is }S_p\textrm{-torsion,}	\]
for any $a\in A$.
\end{theorem}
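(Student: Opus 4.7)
The plan is to reduce the noncommutative question about $\Q\Gamma$ to classical commutative algebra in the group ring $\Q[A]$, exploiting two structural facts: (i) since $A$ is torsion-free abelian, every finitely generated subring of $\Q[A]$ is a Laurent polynomial ring, hence a UFD with well-behaved coprimality; and (ii) $\Q\Gamma$ is a crossed product $\Q[A]\ast(\Gamma/A)$, which is free as a right $\Q[A]$-module on any coset transversal of $A$ in $\Gamma$. These two reductions are what will make the divisibility-type assertions in the theorem tractable.

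The second assertion is the easy half. Since $\widetilde{(p,q)}=1$ and $q(1)\neq 0$, the element $q(a)$ itself belongs to $S_p$ by the definition of the latter. It therefore becomes invertible in the right Ore localization $\Q\Gamma\,S_p^{-1}$, so the right ideal $\langle q(a)\rangle$ equals all of $\Q\Gamma\,S_p^{-1}$ after localization. By exactness of Ore localization this is the same as $(\Q\Gamma/\langle q(a)\rangle)\,S_p^{-1}=0$, i.e., the claimed $S_p$-torsion.

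The substance lies in the first assertion. I would first establish the key commutative lemma: \emph{if $\widetilde{(p,q)}=1$ and $q(1)\neq 0$, then $p(a)$ and $q(b)$ are coprime in $\Q[A]$ for any $a,b\in A$.} Restricting to $A_0=\langle a,b\rangle\cong\Z^d$ with $d\le 2$, identify $\Q[A_0]$ with the Laurent polynomial ring $\Q[x_1^{\pm 1},\dots,x_d^{\pm 1}]$ and write $a,b$ as Laurent monomials in the $x_i$. A hypothetical common irreducible factor of $p(a)$ and $q(b)$ in the UFD $\Q[A_0]$ would, after a well-chosen monomial substitution $x_i\mapsto t^{k_i}$ sending $a$ to $t^m$ and $b$ to $t^n$ for nonzero integers $m,n$, survive as a common factor of $p(t^m)$ and $q(t^n)$ in $\Q[t^{\pm 1}]$, contradicting strong coprimeness. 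The hypothesis $q(1)\neq 0$ is what prevents $q(t^n)$ from being divisible by $t$ and thereby degenerating the specialization.

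Iterating the lemma, any $s=q_1(b_1)\cdots q_k(b_k)\in S_p$ is coprime to $p(a_1)\cdots p(a_r)$ in $\Q[A]$. Now fix a coset transversal $\{g_i\}$ of $A$ in $\Gamma$ and write $\alpha\in\Q\Gamma$ uniquely as $\alpha=\sum_i g_i c_i$ with $c_i\in\Q[A]$. Normality of $A$ makes conjugation by each $g_i$ a ring automorphism of $\Q[A]$ preserving $S_p$ and the class of such $p$-products. If $\alpha s$ lies in the right ideal generated by $p(a_1)\cdots p(a_r)$, unpacking the crossed-product multiplication produces coefficient-wise divisibility relations in $\Q[A]$, which combined with the coprimality above force each $c_i$ itself to lie in the ideal; hence $\alpha$ does too. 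The main obstacle is the rank-$2$ case of the coprimality lemma, where one must choose $(k_1,k_2)$ so that a hypothetical common irreducible factor of $p(a)$ and $q(b)$ survives the specialization $x_i\mapsto t^{k_i}$ as a nonconstant polynomial—a genericity argument that requires simultaneously avoiding the zero loci for $p$, $q$, and the common factor, and is the one place where the strong (as opposed to ordinary) coprimeness of $p$ and $q$ is genuinely needed.
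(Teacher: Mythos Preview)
The paper does not give its own proof of this theorem; it is quoted verbatim as Theorem~4.12 of \cite{cochran2011primary} and used as a black box. So there is nothing in the paper to compare against, and your proposal should be judged on its own merits.

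Your outline is essentially the argument in \cite{cochran2011primary}. The second assertion is indeed immediate: $q(a)\in S_p$ by definition, hence becomes a unit after localization. For the first assertion, the key commutative lemma you isolate---that $\widetilde{(p,q)}=1$ forces $p(a)$ and $q(b)$ to be coprime in $\Q[F]$ for any nontrivial $a,b$ in a finitely generated free abelian $F$---is exactly Proposition~4.5 of \cite{cochran2011primary} (also quoted in this paper just after the theorem). The passage from this lemma to the torsion-freeness of $\Q\Gamma/\langle p(a_1)\cdots p(a_r)\rangle$ via the free right $\Q[A]$-module decomposition $\Q\Gamma=\bigoplus_i g_i\,\Q[A]$ and the $\Gamma$-invariance of $S_p$ is correct: from $\alpha s = p(a_1)\cdots p(a_r)\,\beta$ one reads off, coefficient by coefficient, that each $c_i s$ is divisible in $\Q[A]$ by the $g_i$-conjugate of the $p$-product, and coprimality in a finitely generated Laurent subring forces the divisibility of $c_i$ itself.

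One small correction: your explanation of the hypothesis $q(1)\neq 0$ is off. In $\Q[t^{\pm1}]$ the element $t$ is a unit, so ``divisibility by $t$'' is never an issue. The role of $q(1)\neq 0$ is simply to guarantee $q(a)\neq 0$ when $a=1\in A$ (the only case not covered by torsion-freeness of $A$), so that $0\notin S_p$ and the localization is well defined.
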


So it remains to check that $H_1(M(J_1^{n-1}),\mathcal{R})$ is not a trivial module
Suppose not.
Then
\[\frac{\Z[t^{\pm1}]}{\langle p_{n-1}(t)\rangle}\otimes_{\Z[t^{\pm1}]}\mathcal{R}\cong \frac{\Q[\Lambda/\T_2\Lambda]}{\langle p_{n-1}(\a)\rangle}T_2^{-1}=0,\]
where $\pi_1(W_{n-1})$ is denoted by $\Lambda$ for simplicity.

This implies that the unity $\ol{1\cdot e}$ represents 0 in this module where $e$ is the identity element of the group $\Lambda/\T_2\Lambda$.
Then there is $s\in T_2$ and $f\in \Q[\Lambda/\T_2\Lambda]$ such that $1\cdot s=p_{n-1}(\a)\cdot f$ in $\Q[\Lambda/\T_2\Lambda]$.
Since $p_{n-1}(\a)$ and $s$ are in $\Z[\T_1\Lambda/\T_2\Lambda]$, $f$ also lies in $\Z[\T_1\Lambda/\T_2\Lambda]$.
The following proposition \cite[Proposition~4.5]{cochran2011primary} shows that $s$ is a product of elements of the form $p_{n-1}(\mu^i\b\mu^{-i})$.
\begin{proposition}
Suppose $p(t),q(t)\in\Q[t^{\pm1}]$ are non-zero.
Then $p$ and $q$ are strongly coprime if and only if, for any finitely generated free abelian group $F$ and any nontrivial $a,b\in F$, $p(a)$ is relatively prime to $q(b)$ in $\Q F$.
\end{proposition}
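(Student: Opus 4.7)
The backward direction is immediate: take $F = \Z$ written multiplicatively with generator $c$, and for nonzero integers $m,n$ set $a = c^m$, $b = c^n$. The hypothesis then gives coprimality of $p(c^m)$ and $q(c^n)$ in $\Q F = \Q[c^{\pm1}]$ for every choice of $m,n$, which is exactly strong coprimality (the passage from $\Q[t^{\pm1}]$ to $\Z[t^{\pm1}]$ only affects integer unit factors, which are units in $\Q[t^{\pm1}]$ anyway).

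For the forward direction, I would first reduce the question from $F$ to a smaller ambient group. Let $H \le F$ be the saturation of $\langle a,b\rangle$ (the smallest direct summand of $F$ containing both), and choose a complement $F = H \oplus F'$. Then $\Q F = \Q H[x_1^{\pm1},\dots,x_r^{\pm1}]$ is a Laurent polynomial extension of the UFD $\Q H$, and since both $p(a), q(b)$ already lie in $\Q H$, coprimality in $\Q F$ is equivalent to coprimality in $\Q H$. This uses the standard fact that a common factor of two elements in a Laurent polynomial extension of a UFD may be chosen inside the coefficient ring when both elements lie there.

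Next split on $\mathrm{rank}(H) \in \{1,2\}$. In the rank $1$ case, $H = \langle c\rangle$ for a primitive $c \in F$ and $a = c^m$, $b = c^n$ with $m, n \ne 0$; strong coprimality of $p,q$ gives immediately that $p(c^m)$ and $q(c^n)$ are coprime in $\Z[c^{\pm1}]$, hence in $\Q H = \Q[c^{\pm1}]$. In the rank $2$ case, $a$ and $b$ are $\Q$-linearly independent in $H \otimes \Q$; my plan is to pass to the fraction field $\Q(H) \cong \Q(t_1,t_2)$ and observe that $\Q(a)$ and $\Q(b)$ are two rational function subfields with $\Q(a) \cap \Q(b) = \Q$. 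Since $p(a) \in \Q(a)$ and $q(b) \in \Q(b)$, any common factor in the UFD $\Q H$ must be a unit of $\Q$, hence a unit in $\Q H$.

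The main obstacle is pinning down the rank $2$ argument — specifically, justifying $\Q(a) \cap \Q(b) = \Q$ for $\Q$-independent $a,b$ in a free abelian group and bridging from this field-theoretic coprimality back to coprimality in the Laurent polynomial UFD $\Q H$. Equivalently, one can proceed by choosing a $\Z$-basis for $H$ so that $a$ is a pure power of one basis element, and then applying Gauss's lemma to realize any common irreducible factor as one already living in $\Q[a^{\pm1}]$; specializing the remaining variable then forces it to be a constant and thus a unit. Either formulation closes the rank $2$ case and completes the proof.
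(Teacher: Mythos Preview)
The paper does not give its own proof of this proposition; it is quoted verbatim from Cochran--Harvey--Leidy \cite[Proposition~4.5]{cochran2011primary} and used as a black box. So there is no argument in the present paper to compare yours against.

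Your proposal is correct in outline and close to how the result is actually proved in the cited source. The backward direction is exactly as you say. For the forward direction, the reduction to the saturation $H$ of $\langle a,b\rangle$ and the rank split are the right moves. In rank $1$ you are done by definition. In rank $2$, the cleanest way to finish (and essentially what you describe second) is: pick a basis $\{e_1,e_2\}$ of $H$ with $a=e_1^k$, write $b=e_1^{m_1}e_2^{m_2}$ with $m_2\neq 0$, and observe that any irreducible common factor $f$ of $p(a)$ and $q(b)$ in the UFD $\Q[e_1^{\pm1},e_2^{\pm1}]$ may, by Gauss, be taken in $\Q[e_1^{\pm1}]$. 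Then note that the content of $q(b)=\sum c_i e_1^{im_1}e_2^{im_2}$ as a Laurent polynomial in $e_2$ over $\Q[e_1^{\pm1}]$ is a unit (each nonzero coefficient $c_ie_1^{im_1}$ is a unit), so $f$ must be a unit. This avoids the small pitfall in your ``specialize $e_2$'' variant: if $m_1=0$ and $q(1)=0$, specializing $e_2\mapsto 1$ kills $q(b)$ and gives no information, whereas the content argument goes through uniformly.

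Your first rank-$2$ approach via $\Q(a)\cap\Q(b)=\Q$ is also valid, but bridging from that field-theoretic statement back to coprimality in the UFD $\Q H$ still requires the Gauss-type step (irreducible factors of $p(a)$ live, up to units, in $\Q[a^{\pm1}]$), so it does not really save work over the second approach.
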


Now the same argument in the last part of the proof of Lemma~\ref{Lemma:nontriviality} shows that this cannot be true. 
This finishes the proof that $\vp(\mu_k)$ is nontrivial.

That the image of $\ol{\mu}_k$ under $\vp_k$ is trivial for $k<n-1$ can be shown in the similar manner as in the proof of Lemma~\ref{Lemma:nontriviality} so we omit the proof.

\bibliographystyle{amsalpha}
\renewcommand{\MR}[1]{}
\bibliography{research}

\end{document}